\titleformat{\section}{\normalfont\scshape\centering}{\thesection}{1em}{}
\titleformat{\subsection}[runin]% runin puts it in the same paragraph
       {\normalfont\bfseries}% formatting commands to apply to the whole heading
       {\thesubsection}% the label and number
       {0.5em}% space between label/number and subsection title
       {}% formatting commands applied just to subsection title
       [.]% punctuation or other commands following subsection title
\titleformat{\subsubsection}[runin]% runin puts it in the same paragraph
       {\normalfont\itshape}% formatting commands to apply to the whole heading
       {\thesubsection}% the label and number
       {0.5em}% space between label/number and subsection title
       {}% formatting commands applied just to subsection title
       [.]% punctuation or other commands following subsection title
\DeclareMathOperator{\gal}{Gal}
\DeclareMathOperator{\etale}{\acute{e}t}
\DeclareMathOperator{\Mor}{Mor}
\DeclareMathOperator{\spec}{Spec}
\DeclareMathOperator{\Res}{Res}
\DeclareMathOperator{\codim}{codim}
\DeclareMathOperator{\Div}{Div}
\DeclareMathOperator{\divcart}{div}
\DeclareMathOperator{\Pic}{Pic}
\DeclareMathOperator{\Der}{Der}
\DeclareMathOperator{\coh}{H}
\DeclareMathOperator{\Hom}{Hom}
\DeclareMathOperator{\Card}{Card}
\declaretheorem[name=Théorème,numberwithin=section]{thm}
\newtheorem{prop}[thm]{Proposition}
\newtheorem{lem}[thm]{Lemme}
\newtheorem{cor}[thm]{Corollaire}
\newtheorem{fait}[thm]{Fait}
\newtheorem{schol}[thm]{Scholie}
\theoremstyle{remark}
\theoremstyle{definition}
\newtheorem{defi}[thm]{Définition}
\newtheorem{quest}[thm]{Question}
\def\blfootnote{\gdef\@thefnmark{}\@footnotetext}
\begin{document}

\blfootnote{\textit{Date}: \today}

\thispagestyle{empty}
\begin{center}
\textbf{\Large{\textsc{Pureté de l'approximation forte sur le corps des fonctions d'une courbe algébrique complexe}}}
\end{center}
\begin{center}
\textsc{Elyes Boughattas}
\end{center}

\selectlanguage{english}
\begin{abstract}
Over the function field of a complex algebraic curve, strong approximation off a non-empty finite set of places holds for the complement of a codimension $2$ closed subset in a homogeneous space under a semisimple algebraic group, and for the complement of a codimension $2$ closed subset in an affine smooth complete intersection of low degree.
\end{abstract}
\selectlanguage{french}
\begin{abstract}
Sur le corps des fonctions d'une courbe algébrique complexe, la propriété d'approxi\-mation forte hors d'un ensemble fini non vide de places vaut pour le complémentaire d'un fermé de codimension $2$ d'un espace homogène sous un groupe algébrique semi-simple et également pour le complémentaire d'un fermé de codimension $2$ d'une intersection complète affine lisse de bas degré.
\end{abstract}

\section{Introduction}

Étudier l'existence de points rationnels sur une variété (principe de Hasse), ainsi que leur répartition (densité des points rationnels, approximation faible, approximation forte) est un problème difficile de géométrie algébrique sur un corps non clos, dont la résolution dépend généralement de l'arithmétique du corps de base.\\

Sur un corps de nombres, il est bien connu que l'approximation faible est un invariant birationnel des variétés lisses. Pour l'approximation forte, la situation est plus délicate. En étendant un résultat de Kneser dans le cas des groupes algébriques \cite{MR184945}, Minchev démontre dans \cite{MR984929} qu'une variété vérifiant l'approximation forte est géométriquement simplement connexe, ce qui assure que l'approximation forte n'est pas un invariant birationnel des variétés lisses. Toutefois Cao, Xu \cite{MR3893760} et Wei \cite{MR4204537} ont montré que tout ouvert de l'espace affine $\mathbf{A}^n$ dont le complémentaire est de codimension au moins $2$ vérifie l'approximation forte hors d'une place. Wittenberg a ensuite interrogé dans \cite[Problem 6]{AIM2014}, puis dans son article de survol \cite[Question 2.11]{MR3821185}, un énoncé de pureté pour l'approximation forte qui est le suivant:

\begin{quest}\label{quest1}
Soit $k$ un corps de nombres, $S$ un ensemble fini de places de $k$ et $X$ une variété lisse sur $k$ vérifiant l'approximation forte hors de $S$. Un ouvert de $X$ dont le complémentaire est de codimension au moins $2$ vérifie-t-il également l'approximation forte hors de $S$?
\end{quest}
Par la suite, Cao, Liang et Xu \cite{MR4030258} ont apporté une réponse positive à cette question hors d'un ensemble fini non vide de places, pour les groupes semi-simples, simplement connexes et quasi-déployés. Cao et Huang \cite{MR4208898} apportent également une réponse positive pour certaines familles de groupes algébriques semi-simples simplement connexes non quasi-déployés.
\\

Si, maintenant, le corps de base est le corps des fonctions $K$ d'une courbe projective, lisse et irréductible $\Gamma$ sur le corps des complexes $\mathbf{C}$, le panorama est différent. On note~$\Omega_K$ l'ensemble des places de~$K$ qui, par critère valuatif de propreté, s'identifie aux points fermés~$\Gamma(\mathbf{C})$ de $\Gamma$. Lorsque~$v\in\Omega_K$ on note $K_v$ le complété de $K$ en $v$ et $\mathscr{O}_v$ son anneau des entiers. On dit alors que $X$ vérifie \textit{l'approximation faible} lorsque le plongement diagonal de $X(K)$ dans $\prod_{v\in\Omega_K}X(K_v)$ est d'image dense, où le produit est muni de la topologie produit. Lorsque $X$ est une $K$-variété, c'est-à-dire un $K$-schéma séparé de type fini, et~$S$ est un ensemble fini de places de $K$, on note $\mathscr{O}_{K,S}$ les entiers de $K$ hors de $S$ et on appelle $\mathscr{O}_{K,S}$-\textit{modèle} de~$X$ tout $\mathscr{O}_{K,S}$-schéma séparé de type fini $\mathscr{X}$ tel que $\mathscr{X}\otimes_{\mathscr{O}_{K,S}} K$ est isomorphe à $X$.

Pour définir l'approximation forte sur $K$, notons également $\mathbf{A}_K$ l'anneau des adèles de~$K$, qu'on définit comme l'ensemble des~$(x_v)\in\prod_{v\in\Omega_K}K_v$ tels que $x_v\in\mathscr{O}_v$ pour $v$ hors d'un ensemble fini de places de $K$. Cet anneau est muni d'une topologie via le système fondamental de voisinages de $0$ indexé par les ensembles finis $S\subset\Omega_K$ et donné par les~$\prod_{v\in S}U_v\times\prod_{v\in\Omega_K\backslash S}\mathscr{O}_v$ où pour $v\in S$ on désigne par $U_v$ un voisinage de $0$ dans $K_v$. De même, si $S\subset\Omega_K$ est fini, on note $\mathbf{A}_K^S$ l'anneau des adèles hors de $S$, défini comme la projection de $\mathbf{A}_K$ sur $\prod_{v\in\Omega_k\backslash S}K_v$ et muni de la topologie finale pour cette projection. Quand~$S\subset\Omega_K$ est fini, on peut munir $X(\mathbf{A}_K^S)$ de la topologie adélique comme suit: si~$S'\subset\Omega_K$ est fini contenant $S$ et $\mathscr{X}$ est un $\mathscr{O}_{K,S'}$-modèle de $X$, on a une identification canonique $X(\mathbf{A}_K^S)=\varinjlim_{S'\subset T\text{ fini}}\prod_{v\in T\backslash S}X(K_v)\times\prod_{v\in\Omega_K\backslash T}\mathscr{X}(\mathscr{O}_v)$ (voir par exemple \cite[\S2]{MR2985010}) et la topologie du terme de droite est indépendante du choix de $S'$ ainsi que du modèle $\mathscr{X}$. La définition suivante de l'approximation forte est alors centrale au sein de ce texte :
\begin{defi}
Soit $K$ le corps des fonctions d'une courbe algébrique complexe, $S$ un ensemble fini de places de $K$ et $X$ une variété sur $K$. On dit que $X$ vérifie \textit{l'approximation forte hors de $S$} lorsque l'application diagonale de $X(K)$ dans~$X(\mathbf{A}_K^S)$ est d'image dense dès que $X(K_v)\neq\emptyset$ pour tout $v\in S$.
\end{defi}

Lorsque $X$ est une $K$-variété lisse, un point rationnel de $X$ correspond à une section d'un modèle propre de $X$ au-dessus de~$\Gamma$:~ainsi, étudier l'arithmétique de $X$ est dans de nombreux cas facilité par les méthodes géométriques de déformation des courbes algébriques. Par exemple, si $X$ est rationnellement connexe, Graber, Harris et Starr montrent par de telles méthodes, dans \cite[Theorem~1.2]{MR1937199}, qu'elle admet un point rationnel, et le lieu des points rationnels est aussitôt dense pour la topologie de Zariski par \cite[Theorem 2.13]{MR1158625}. Hassett et Tschinkel \cite{MR2208420} conjecturent en fait que l'approximation faible vaut pour toute variété rationnellement connexe sur le corps des fonctions de $\Gamma$. Bien que cette conjecture soit hors de portée pour le moment, Colliot-Thélène et Gille \cite{MR2029865} ont dégagé une multitude de variétés rationnellement connexes jouissant de l'approximation faible: les espaces homogènes sous un groupe linéaire connexe, les fibrations en de tels espaces homogènes au-dessus d'une variété vérifiant l'approximation faible et les surfaces de del Pezzo de degré au moins $4$. De même, de Jong et Starr établissent dans \cite{dJS2006} un résultat profond d'approximation faible pour les intersections complètes lisses de bas degré dans $\mathbf{P}^n$. Enfin, Findley démontre également un résultat d'approximation faible pour des intersections complètes dans les grassmaniennes dans \cite{MR2941693} et Minocchieri établit dans \cite{MR4099637} l'approximation faible pour des intersections complètes de bas degré dans des espaces projectifs pondérés.

Pour ce qui est de l'étude des points entiers, les premiers résultats sont dus à Hassett et Tschinkel dans \cite{MR2403393} où ils démontrent que les points entiers sont denses pour la topologie de Zariski, pour certaines variétés log Fano. Les premiers résultats d'approximation forte sont quant à eux dus à Chen et Zhu qui démontrent dans \cite{zbMATH06921639} que l'approximation forte vaut hors d'une place pour les intersection complètes affines lisses et rationnellement connexes, sous condition sur le multi-degré. Le cas des espaces homogènes sous un groupe semi-simple a ensuite été obtenu par Colliot-Thélène dans \cite{MR3782219}: l'auteur démontre en outre que le groupe multiplicatif sur $\mathbf{C}(\mathbb{P}^1)$ ne vérifie l'approximation forte hors d'aucun ensemble fini de places. Il s'ensuit qu'en toute généralité, l'approximation forte n'est ni un invariant birationnel des variétés lisses, ni l'apanage de certaines variétés géométriquement simplement connexes. Il nous semble alors naturel d'interroger un analogue de la question~\ref{quest1}, ce qui soulève la définition suivante:

\begin{defi}
Soit $K$ le corps des fonctions d'une courbe algébrique complexe, $S$ un ensemble fini de places de $K$ et $X$ une variété sur $K$. On dit que $X$ vérifie \textit{la pureté de l'approximation forte hors de $S$} si tout complémentaire d'un fermé de codimension $2$ de $X$ vérifie l'approximation forte hors de $S$.
\end{defi}

L'analogue suivant de la question \ref{quest1} se pose alors naturellement:

\begin{quest}\label{questpure}
Si $K$ est le corps des fonctions d'une courbe algébrique complexe, $S$ un ensemble fini de places de $K$ et $X$ une variété lisse sur $K$. Si $X$ vérifie l'approximation  forte hors de $S$, vérifie-t-elle également la pureté de l'approximation forte hors de $S$?
\end{quest}

Par la présente note, nous apportons d'une part, à travers le théorème \ref{thpureh}, une réponse positive à cette question pour les espaces homogènes sous un groupe semi-simple, généralisant de la sorte le résultat principal de Colliot-Thélène dans \cite{MR3782219}. Un corollaire que nous en donnons est la pureté de l'approximation forte pour certaines quadriques affines lisses. D'autre part, nous montrons dans le théorème \ref{thpuricl} que la pureté de l'approximation forte hors d'une place vaut pour les intersections complètes affines lisses de bas degré, ce qui constitue une généralisation des résultats établis par Chen et Zhu dans \cite{zbMATH06921639}. Ces deux résultats suggèrent que la question \ref{questpure} pourrait admettre une réponse positive en toute généralité.\\

Le premier énoncé que nous obtenons est une version «pure» du théorème d'approximation forte de Colliot-Thélène \cite[Théorème 3.4]{MR3782219}, qui repose en partie sur le théorème d'existence de Riemann.

\begin{restatable}{thm}{thpureh}\label{thpureh}
Si $S$ est un ensemble fini non vide de places de $K$, alors tout $K$-espace homogène sous un groupe algébrique connexe semi-simple vérifie la pureté de l'approximation forte hors de $S$.
\end{restatable}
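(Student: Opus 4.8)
Le plan est de ramener la pureté pour $X$ à un énoncé de pureté pour le seul groupe $G$, puis de traiter celui-ci géométriquement. Moyennant un dévissage standard sur le quotient fini $\pi_0$ du stabilisateur, comme dans le cas non connexe de \cite{MR3782219}, on se ramène au cas où le stabilisateur d'un $K$-point est connexe. On peut supposer $U(\mathbf{A}_K^S)\neq\emptyset$ pour $U:=X\setminus Z$, d'où $X(K)\neq\emptyset$ par \cite[Théorème 3.4]{MR3782219} et, $G(K)$ étant Zariski-dense dans $G$, l'existence d'un $K$-point $x_0\in U$. Posons $H:=\mathrm{Stab}_G(x_0)$, qui est connexe. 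L'application orbitale $\lambda\colon G\to X$, $g\mapsto g\cdot x_0$, est un $H$-torseur, donc lisse, surjective et à fibres connexes; par conséquent $Z':=\lambda^{-1}(Z)$ est de codimension $\geq 2$ dans $G$ et $\lambda$ induit un morphisme lisse surjectif $G\setminus Z'\to U$. Sur chaque complété $K_v\simeq\mathbf{C}((t))$, parfait de dimension cohomologique $\leq 1$, le théorème de Steinberg donne $\coh^1(K_v,H)=0$; joint à la trivialité des $H$-torseurs sur $\mathbf{C}$, ceci permet de relever, place par place, tout point adélique de $U$ hors de $S$ en un point adélique de $G\setminus Z'$ hors de $S$ (de manière entière en presque toute place), puis de redescendre par $\lambda$ une approximation globale dans $G\setminus Z'$. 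Il suffit donc d'établir: \emph{le complémentaire de tout fermé de codimension $\geq 2$ d'un $K$-groupe semi-simple vérifie l'approximation forte hors de $S$.}

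Soit alors $G$ semi-simple sur $K$ et $Z'\subseteq G$ de codimension $\geq 2$. Comme $\mathrm{cd}(K)\leq 1$, le groupe $G$ est quasi-déployé (Steinberg), donc $K$-rationnel, sa grosse cellule de Bruhat $U^-\times T\times U^+$ étant un ouvert dense d'un espace affine ($U^\pm$ déployés, $T$ induit). Fixons un modèle projectif lisse $\overline{\mathcal G}\to\Gamma$ d'une compactification lisse $\overline G$ de $G$: ses fibres sont des $\mathbf{C}$-variétés rationnellement connexes. Quitte à rétrécir $\Gamma$ en un ouvert $V\subseteq\Gamma\setminus S$, on dispose de modèles lisses $\mathcal G\subseteq\overline{\mathcal G}$ de $G$ et $\mathcal Z'\subseteq\mathcal G$ de $Z'$, avec $\mathcal Z'$ de codimension $\geq 2$. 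L'approximation forte hors de $S$ pour $G\setminus Z'$ se traduit en l'existence, pour tout ensemble fini $T$ de places et tous jets prescrits le long de $T$ issus de points locaux de $G\setminus Z'$, d'une section $V\to\mathcal G$ réalisant ces jets et évitant $\mathcal Z'$. L'approximation forte hors de $S$ pour $G$ elle-même, cas particulier de \cite[Théorème 3.4]{MR3782219}, fournit une section $\sigma_0\colon V\to\mathcal G$ avec les bons jets le long de $T$ mais ignorant $\mathcal Z'$; en lui greffant, selon les techniques de déformation de \cite{MR1937199} et \cite{MR3782219}, des courbes rationnelles verticales au-dessus d'une place de $S$ (une seule suffit), on la rend suffisamment libre sans altérer les jets le long de $T$. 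Elle se déforme alors, à jets fixés, dans une famille dont le morphisme d'évaluation vers $\overline{\mathcal G}$ est dominant; $\mathcal Z'$ étant de codimension $\geq 2$ dans $\overline{\mathcal G}$, le comptage de dimension $1+(\dim\overline{\mathcal G}-2)<\dim\overline{\mathcal G}$ montre qu'une section générale de cette famille évite $\mathcal Z'$, ce qui conclut.

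Le point délicat est que l'on doit éviter $\mathcal Z'$ non pas en un nombre fini de places -- ce que l'approximation forte pour $G$ assurerait déjà -- mais en toutes les places de $V$ simultanément, phénomène géométrique qui échappe à la seule approximation forte; c'est l'hypothèse de codimension $\geq 2$ qui le rend accessible, via le comptage de dimension ci-dessus, à condition de disposer d'assez de modules, donc de rendre $\sigma_0$ libre sans la faire dériver vers le diviseur à l'infini $\overline{\mathcal G}\setminus\mathcal G$ au-dessus de $V$, obstacle de codimension $1$ insensible au comptage: c'est ici qu'intervient de manière essentielle l'hypothèse $S\neq\emptyset$. Le dévissage au stabilisateur connexe est, quant à lui, un point technique plus élémentaire.
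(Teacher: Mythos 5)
Votre première étape — ramener la pureté pour $X$ à la pureté pour le groupe lui-même — est correcte dans son principe et proche de l'esprit de la preuve de l'article, mais celle-ci procède autrement et plus simplement : au lieu de se ramener à un stabilisateur connexe puis d'invoquer Steinberg, l'article remplace d'abord $G$ par son revêtement universel, puis applique la méthode de descente (proposition \ref{purdesc}) au $H$-torseur $G\to X$ pour $H$ \emph{quelconque} : tout tordu $G^{\xi}$ est un $G$-torseur possédant un point rationnel (corps $C_1$), donc isomorphe à $G$, et la surjectivité de $\coh^1(K,H)\to\bigoplus_{v\notin S}\coh^1(K_v,H)$ (lemme \ref{lemJLCT}, théorème d'existence de Riemann) fait le reste. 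Votre « dévissage standard sur $\pi_0$ » cache exactement ce lemme, et c'est là (ainsi que dans l'approximation forte pour $\mathbf{A}^1$) que $S\neq\emptyset$ intervient déjà — pas seulement dans la partie géométrique comme vous l'affirmez. Notez aussi que votre parenthèse « $T$ induit » pour la grosse cellule requiert $G$ simplement connexe (lemme \ref{lemC16}), réduction que vous n'effectuez pas.

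Le vrai problème est la seconde moitié : la pureté pour $G\setminus Z'$ par déformation de sections d'un modèle compactifié. Votre comptage de dimension est correct \emph{une fois donnée} une famille dominante de sections à jets fixés, intègres hors de $S$ ; mais construire cette famille est précisément toute la difficulté, et vous ne faites que la nommer. Le nombre d'intersection d'une section avec le diviseur au bord $\overline{\mathcal G}\setminus\mathcal G$ est un invariant de déformation : pour qu'une déformée générale reste entière hors de $S$, il faut confiner toutes les intersections avec le bord dans les fibres au-dessus de $S$ (avec $\lvert S\rvert=1$, toute la positivité requise doit être concentrée dans une seule fibre, où un peigne n'admet qu'une dent par point d'attache), ce qui exige une analyse de tangence ou de log-stabilité du type de celle que Chen et Zhu développent pour les intersections complètes — et qui occupe toute la section 4 du présent article (théorème \ref{purafmod}, proposition \ref{ouvcompirr}). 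Vos références ne fournissent pas cet ingrédient pour un $\overline{G}$ arbitraire : \cite{MR3782219} ne démontre pas l'approximation forte pour $G$ par déformation mais par la décomposition de Bruhat, et \cite{MR1937199} ne traite pas de conditions d'intégralité. L'article évite entièrement cet écueil : la proposition \ref{extphi} fibre un ouvert $Y\subset G$ contenant $G^{(1)}$ sur la variété torique standard $R$ avec des fibres qui sont des espaces affines, et la méthode de fibration pour la pureté (proposition \ref{purmethfib}) ramène tout à la pureté pour $\mathbf{A}^n$ (proposition \ref{affAf}), elle-même une récurrence sur l'approximation forte pour $\mathbf{A}^1$ hors d'une place. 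En l'état, votre argument géométrique comporte donc une lacune essentielle, explicitement reconnue mais non comblée.
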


Une conséquence du théorème précédent est le corollaire \ref{logquad} suivant qui donne un résultat de pureté de l'approximation forte pour les quadriques affines lisses: 

\begin{restatable}{cor}{quadaffcor}\label{quadaff}
Soit $S$ un ensemble fini non vide de places de $K$, un entier naturel $n\geq 3$ et $q(x_1,\dots,x_n)$ une forme quadratique non dégénérée sur $K$. Si $\alpha \in K^{\times}$, la quadrique affine d'équation $q(x_1,\dots,x_n)=\alpha$ vérifie la pureté de l'approximation forte hors de $S$.
\end{restatable}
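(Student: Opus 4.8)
The plan is to deduce this from Théorème \ref{thpureh} by realizing the smooth affine quadric $Q : q(x_1,\dots,x_n)=\alpha$ as a homogeneous space under a connected semisimple group. First I would recall the classical fact that, after scaling the quadratic form (which does not affect smoothness of $Q$ nor the arithmetic question), one may assume $q$ represents $\alpha$, i.e.\ $Q(K)\neq\emptyset$; if $Q(K)=\emptyset$ then strong approximation is vacuous for $Q$ and a fortiori for every open subset, so there is nothing to prove. Thus fix a $K$-point $x_0\in Q(K)$. The special orthogonal group $G=\mathrm{SO}(q)$ acts on the affine quadric $Q=\{q=\alpha\}$, and this action is transitive over $\overline{K}$ with stabilizer of $x_0$ isomorphic to $\mathrm{SO}(q')$ where $q'$ is the restriction of $q$ to the hyperplane $x_0^{\perp}$ (a nondegenerate $(n-1)$-ary form). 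Hence $Q$ is, over $K$, a homogeneous space under the connected semisimple group $\mathrm{SO}(q)$ for $n\geq 3$ (for $n=3$ one has $\mathrm{SO}(q)$ a form of $\mathrm{PGL}_2$, still connected semisimple; the stabilizer is a one-dimensional torus, and $Q$ remains a homogeneous space). For $n=3$ one must be slightly careful that $\mathrm{SO}(q)$ is semisimple and not merely reductive — it is, being an inner form of $\mathrm{PGL}_2$ — so the hypothesis of Théorème \ref{thpureh} is met.

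The main step is then simply to invoke Théorème \ref{thpureh}: since $Q$ is a $K$-homogeneous space under the connected semisimple group $\mathrm{SO}(q)$, and since $S$ is a nonempty finite set of places of $K$, the theorem gives that $Q$ satisfies purity of strong approximation off $S$, which is exactly the assertion of the corollary. I should also remark that $Q$ itself satisfies strong approximation off $S$ (this is already contained in Colliot-Thélène's \cite[Théorème 3.4]{MR3782219} and is recovered by taking the empty closed subset in Théorème \ref{thpureh}), so the statement is non-vacuous.

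The only genuine subtlety — and the step I expect to require the most care in writing up — is the degenerate low-rank case $n=3$, where the stabilizer is a torus rather than again an orthogonal group, and where one should check that $Q$ is really a single $\mathrm{SO}(q)$-orbit over $K$ (equivalently that the torsor structure is the one implicitly allowed by the notion of homogeneous space used in Théorème \ref{thpureh}, namely with possibly non-smooth or non-connected stabilizers is \emph{not} needed here — the stabilizer is a smooth torus). In fact for all $n\geq 3$ the orbit map $G\to Q$, $g\mapsto g\cdot x_0$, is smooth and surjective with geometrically connected fibers, so $Q$ is a homogeneous space in the required sense; the verification is the standard one and I would only sketch it. Everything else is a direct application of the preceding theorem, so the corollary follows with no further input.
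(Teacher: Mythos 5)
Your proposal is correct and follows exactly the paper's route: the paper's own proof is the two-line observation that the affine quadric is a homogeneous space under the semisimple group $\mathrm{SO}(q)$ for $n\geq 3$, followed by an appeal to the théorème \ref{thpureh}. The extra care you take (Witt-type transitivity, the stabilizer for $n=3$, the case $Q(K)=\emptyset$ — which in fact never occurs here since $K$ is $C_1$) only fills in standard details the paper leaves implicit.
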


Le deuxième résultat est quant à lui une version «pure» du théorème d'approximation de Chen et Zhu \cite[Theorem 1.3]{zbMATH06921639}:

\begin{restatable}{thm}{thpuricl}\label{thpuricl}
Soit $S$ un ensemble fini non vide de places de $K$ et $X$ une intersection complète lisse dans $\mathbf{P}^n_K$ de multi-degré $(d_1,\dots,d_c)$ telle que $\sum_{i=1}^cd_i^2\leq n$. Si $D$ est une section hyperplane lisse de $X$, alors $X\backslash D$ vérifie la pureté de l'approximation forte hors de~$S$.
\end{restatable}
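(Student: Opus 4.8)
The plan is to re-run the proof of Chen–Zhu's theorem \cite[Theorem~1.3]{zbMATH06921639} --- which, under $\sum_{i=1}^{c}d_i^2\leq n$, produces $K$-points of $X\backslash D$ integral outside $S$ and approximating prescribed local points --- and to observe that the sections of a projective model it constructs can be chosen to avoid, on top of that, the Zariski closure of a prescribed codimension $2$ subset; by a dimension count this is an open dense, hence non-empty, condition on the relevant family of sections.

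Let $Z\subseteq X\backslash D$ be closed of codimension $\geq 2$. Since $X\backslash D$ is affine, strong approximation off $S$ is unchanged when $S$ grows, and $Z$ has finitely many components, we may assume $Z$ irreducible of codimension $2$ in $X$. Choose a non-empty open $U\subseteq\Gamma$, a smooth projective model $p:\mathcal{X}\to U$ of $X$, the closure $\mathcal{D}\subseteq\mathcal{X}$ of $D$, which is relatively ample so that $\mathcal{X}\backslash\mathcal{D}$ models $X\backslash D$, and the closure $\mathcal{Z}\subseteq\mathcal{X}$ of $Z$, which dominates $U$ and satisfies $\codim_{\mathcal{X}}\mathcal{Z}=2$. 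After the usual reductions, strong approximation off $S$ for $(X\backslash D)\backslash Z$ reduces to the following: given a finite set $T$ of places disjoint from $S$, prescribed jets of sections at the places of $S\cup T$ (those at $T$ coming from local points lying off $Z$) and integrality data, produce a section $\sigma:\Gamma\to\mathcal{X}$ of $p$ realising the prescribed jets with $\sigma(\Gamma)$ meeting $\mathcal{D}\cup\mathcal{Z}$ only over $S$. (A section avoiding $\mathcal{D}\cup\mathcal{Z}$ at one place automatically has its generic point in $X\backslash(D\cup\overline{Z})=(X\backslash D)\backslash Z$.)

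Chen–Zhu's construction yields a family $\mathcal{F}$ of sections of $p$ of large and divisible degree, obtained by smoothing combs whose teeth are very free rational curves in the fibres of $p$, with the following property: the subfamily $\mathcal{F}'\subseteq\mathcal{F}$ cut out by the jet conditions over $S\cup T$ and by meeting $\mathcal{D}$ only over $S$ is non-empty, and --- the point here --- for $\sigma\in\mathcal{F}'$ the normal bundle of $\sigma(\Gamma)$ in $\mathcal{X}$, twisted down along the jet loci, is globally generated, so that the evaluation morphism $e:\mathcal{C}'\to\mathcal{X}_{U'}$ from the universal curve $\pi:\mathcal{C}'\to\mathcal{F}'$ over $U'=U\backslash(S\cup T)$ is smooth. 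Granting this, $e^{-1}(\mathcal{Z}\cap\mathcal{X}_{U'})$ is smooth over $\mathcal{Z}\cap\mathcal{X}_{U'}$ of relative dimension $\dim\mathcal{C}'-\dim\mathcal{X}_{U'}$, whence
\[
\dim e^{-1}(\mathcal{Z}\cap\mathcal{X}_{U'})=(\dim\mathcal{X}-2)+(\dim\mathcal{F}'+1)-\dim\mathcal{X}=\dim\mathcal{F}'-1,
\]
so $\pi\bigl(e^{-1}(\mathcal{Z}\cap\mathcal{X}_{U'})\bigr)$ is a proper closed subset of $\mathcal{F}'$. Any $\sigma$ in $\mathcal{F}'$ lying outside it and outside the likewise proper locus of sections meeting $\mathcal{Z}$ over a place of $S\cup T$ (avoidable there because the prescribed local points lie off $Z$ and $\mathcal{Z}$ has codimension $2$) does the job.

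The crux is the smoothness of $e$: one must reopen Chen–Zhu's argument and verify that their very free combs carry enough deformations to sweep out the fibres of $p$ over $U'$ even after the finitely many jets over $S\cup T$ have been prescribed --- equivalently, that through a general point of $\mathcal{X}_{\gamma}$ ($\gamma\in U'$) there still passes a section of the required degree meeting all the imposed constraints. This is a positivity statement about the normal bundles of the smoothed combs, and it is here that the hypothesis $\sum_{i=1}^{c}d_i^2\leq n$ re-enters, through the fact (de Jong–Starr \cite{dJS2006}, Chen–Zhu) that the fibres of $p$ are rationally simply connected and carry very twisting families of lines; large degree then keeps the relevant normal bundles globally generated after the bounded twist imposed by the jet conditions. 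Once this flexibility is in hand, the passage from strong approximation to its purity is the soft dimension count above, and the hyperplane section $D$ serves only to provide the relatively ample boundary $\mathcal{D}$.
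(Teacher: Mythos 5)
Your overall strategy --- sweep out $\mathcal{X}$ by the family of sections that Chen--Zhu construct, then discard by a dimension count the members meeting the codimension $2$ locus --- is reasonable in spirit, but it has a genuine gap, and you locate it yourself: the smoothness of the evaluation $e:\mathcal{C}'\rightarrow\mathcal{X}_{U'}$ from the universal curve over the \emph{constrained} family $\mathcal{F}'$. This cannot simply be read off from \cite{zbMATH06921639}: their proof of strong approximation only needs the family of smoothed combs to be non-empty and to realise the prescribed jets, for which the vanishing of $\coh^1$ of a twisted normal bundle suffices; your dimension count needs the much stronger statement that the normal bundle of every member of $\mathcal{F}'$, twisted down along the jet loci, is globally generated at \emph{every} point of $\Gamma\backslash(S\cup T)$, so that $e$ is smooth and in particular flat. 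You assert this ``positivity statement'' and defer it to a reopening of the comb construction, but it is precisely the hard core of the argument and it is not supplied; without it the equality $\dim e^{-1}(\mathcal{Z}\cap\mathcal{X}_{U'})=\dim\mathcal{F}'-1$ is unjustified, since for a merely dominant $e$ the set $e^{-1}(\mathcal{Z}\cap\mathcal{X}_{U'})$ could contain a component dominating $\mathcal{F}'$ (e.g.\ if all sections of a component of $\mathcal{F}'$ were forced through a point of $\mathcal{Z}$). A secondary wobble: ``strong approximation off $S$ is unchanged when $S$ grows'' is false in the direction you would need it (the property off a larger $S$ is weaker), though the reduction to $Z$ irreducible that it was meant to justify is not actually needed.

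The paper sidesteps exactly this difficulty by never reopening the global construction over $\Gamma$. It isolates a purity criterion (théorème \ref{purafmod}) whose proof combines weak approximation on $X$, strong approximation on $\mathbf{A}^1_K$, and a deformation-theoretic study of the space $H$ of conics $f$ with $f^{-1}(D)=E$, an open subscheme of $\Mor(\mathbf{P}^1_K,X;im(E)\subset D)$. The avoidance of $Z$ is obtained by your kind of dimension count, but performed on this conic space (proposition \ref{ouvcompirr}), where the required constancy of the rank of the differential of the evaluation along $\mathbf{A}^1_K\times\{[f]\}$ is an elementary consequence of Birkhoff--Grothendieck on $\mathbf{P}^1_K$ (lemme \ref{lemkollar}) rather than a delicate positivity property of smoothed combs over $\Gamma$; the passage from a good conic through two well-chosen rational points to an integral point is then handled by strong approximation on $\mathbf{A}^1_K$ (proposition \ref{afmod}). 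Chen--Zhu's work enters only as a black box, through \cite[Theorem~1.8]{zbMATH06921639}, to check that a general fibre of $ev_{0,1}$ is rationally connected. If you wish to salvage your route, the missing global generation statement over $\Gamma\backslash(S\cup T)$ is where all the remaining work lies.
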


La section $2$ de notre article est dédiée à l'énoncé d'une méthode de fibration et d'une méthode de descente pour la pureté de l'approximation forte, qui sont respectivement des variantes d'une méthode de fibration énoncée dans \cite[Proposition 3.1]{MR3007293} et d'une méthode de descente énoncée dans \cite[Théorème 4.4]{MR3782219}. À la manière de \cite[Théorème~4.4]{MR3782219}, un ingrédient principal de notre méthode de descente est le théorème d'existence de Riemann, présent en filigrane dans la démonstration du lemme \ref{lemJLCT}.

La section $3$ contient la démonstration du théorème \ref{thpureh} avec, dans un premier temps, le cas particulier où $G$ est simplement connexe et les stabilisateurs de $X$ sont triviaux:~pour parvenir à ce premier cas, on combine la méthode de fibration établie à la section~$3$ au théorème de décomposition de Bruhat, ce qui permet de raisonner par dévissage à la manière de Cao, Liang et Xu dans la démonstration de \cite[Theorem~3.6]{MR4030258}. Le passage au cas général est alors permis par la méthode de descente qui est établi dans la section~$3$. En exploitant ces résultats, on obtient enfin la pureté de l'approximation forte hors d'une place pour les quadriques affines lisses et pour le complémentaire d'une quadrique projective lisse, sur le corps des fonctions d'une courbe algébrique complexe. Il s'agit respectivement des versions «pures» de \cite[Corollaire~4.1]{MR3782219} et de \cite[Corollaire~4.5]{MR3782219}.

Le section $4$ est quant à elle dédiée au théorème \ref{thpuricl}. Pour ce faire, nous établissons dans un premier temps le théorème \ref{purafmod} qui est un critère de pureté pour l'approximation forte, qu'on combine ensuite dans la sous-section \ref{total} au résultat principal de Chen et Zhu \cite[Theorem~1.8]{zbMATH06921639} pour obtenir le résultat de pureté souhaité du théorème \ref{thpuricl}. Notre démonstration de ce critère de pureté repose de façon décisive sur une théorie de la déformation que nous développons dans les sous-sections \ref{algcomm}, \ref{espacetgt} et \ref{lemdeform}, et dont l'aboutissement est le lemme~\ref{lemkollar}, ainsi que son corollaire \ref{corcrucial}. De façon remarquable, notre critère de pureté donné dans le théorème \ref{purafmod} repose sur l'espace des coniques contenues dans l'intersection complète lisse considérée, là où Chen et Zhu énoncent un critère dans \cite[Theorem~1.6]{zbMATH06921639} faisant intervenir des courbes de degré quelconque, et l'appliquent ensuite dans \cite[Theorem~1.8]{zbMATH06921639} à l'espace des coniques.

%Toutefois, a contrario du critère d'approximation forte \cite[Theorem~1.6]{zbMATH06921639} dont il se veut le pendant «pur», notre critère de pureté fait intervenir des espaces de modules de coniques, là où le \cite[Theorem~1.6]{zbMATH06921639} faisait intervenir des espaces de modules de courbes de degré arbitraire. De fait, dans ce contexte de pureté, notre théorie de la déformation ne devient fertile qu'en présence de coniques, la raison fondamentale en étant donnée par la scholie \ref{scholie}.

\subsection*{Notations}

Au cours de ce texte, on fixe $K$ le corps des fonctions d'une courbe projective, lisse et irréductible~$\Gamma$ sur le corps des complexes $\mathbf{C}$. Notons $\Omega_K$ l'ensemble de ses places qui, par critère valuatif de propreté, s'identifie aux points fermés $\Gamma(\mathbf{C})$ de $\Gamma$. Lorsque $v\in\Omega_K$ on note $K_v$ le complété de $K$ en $v$: il est isomorphe à $\mathbf{C}((t))$ par \cite[Chapitre II, \S4, Théorème 2]{zbMATH03657913} et cet isomorphisme identifie son anneau des entiers $\mathscr{O}_v$ à $\mathbf{C} [[t]]$.

Lorsque $X$ est une $K$-variété, c'est-à-dire un $K$-schéma séparé de type fini, on note~$X^{(1)}$ l'ensemble de ses points de codimension $1$. Le fibré tangent de $X$ est noté $\mathscr{T}_X$ et si $X$ est lisse et $Y$ est une sous-variété lisse de $X$ on note $\mathscr{N}_{Y/X}$ le fibré normal de $Y$ dans~$X$.

Dès que $G$ est un $K$-groupe algébrique, on note $\coh^1(K,G)$ l'ensemble pointé de cohomologie galoisienne défini dans \cite[Chapitre I, \S5.1]{MR1324577}. D'après \cite[Chapitre I, \S5.2]{MR1324577}, il classifie les $K$-torseurs à gauche sous $G$, c'est-à-dire les $G$-variétés à gauche géométriquement isomorphes à $G$ comme $G$-variétés, à isomorphisme près de $G$-variétés. Lorsque~$G$ est linéaire, $T$ est un $K$-torseur à gauche sous~$G$ et~$Y$ est une $G$-variété à droite qui est quasi-projective, on note~$Y^T$ la variété obtenue comme quotient de $Y\times_KT$ par l'action de~$G$ définie par~$g.(y,t)=(yg^{-1},gt)$ (l'existence d'une telle variété étant par exemple assurée par \cite[Lemma~2.2.3]{MR1845760}).
%Lorsqu'on munit~$G$ de sa structure de $G$-variété à droite par l'action $h.g=g^{-1}hg$, on obtient un groupe qu'on note $G^{(T)}$ et qui est une forme intérieure de $G$ (voir \cite[\S2.2, Example~$1$]{MR1845760}). rDe plus, si $Y$ est un $G$-torseur à droite, alors la variété $Y^T$ est un $G^{(T)}$-torseur à droite sur $K$ (voir \cite[\S2.2, Example $2$]{MR1845760}).

Si $k$ est un corps, $Y$ une variété projective et $X$ une variété quasi-projective sur $k$, on note $\Mor(Y,X)$ le $k$-schéma localement noethérien paramétrant les morphismes de $k$-schémas de $Y$ dans $X$ (on pourra se référer à \cite[\S4.c.]{MR1611822}): il représente le foncteur de la catégorie des $k$-schémas dans celle des ensembles, envoyant un $k$-schéma $T$ sur l'ensemble des morphismes de $k$-schémas $Y\times_kT\rightarrow X$.

Au cours de ce texte, on dit également qu'une suite
\begin{center}
\begin{tikzcd}
A\arrow[r, "\iota"] & B\arrow[r,"p"] & C
\end{tikzcd}
\end{center}
d'ensembles, où $C$ est pointé par un élément $c$, est exacte lorsque $\iota(A)=p^{-1}(\left\lbrace c\right\rbrace)$. 

\section{Méthodes de fibration et de descente pour la pureté}

\subsection{Une méthode de fibration pour la pureté}

On établit ici la méthode de fibration pour la pureté de l'approximation forte suivante:

\begin{prop}\label{purmethfib}
Soit $f:Y\rightarrow X$ un morphisme lisse de $K$-variétés lisses géométriquement intègres, dont les fibres géométriques sont intègres. Soit $W$ un ouvert non vide de $X$ et $S\subset\Omega_K$ fini tels que:
\begin{enumerate}[label=(\roman*)]
\item la variété $X$ vérifie la pureté de l'approximation forte hors de $S$;
\item pour tout $w\in W(K)$, la $K$-variété $f^{-1}(w)$ vérifie la pureté de l'approximation forte hors de $S$;
\item si $v\in S$, l'application $f^{-1}(W)(K_v)\rightarrow W(K_v)$ est surjective.
\end{enumerate}
Alors $Y$ vérifie la pureté de l'approximation forte hors de $S$.
\end{prop}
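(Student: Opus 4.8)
The plan is to fix a closed subset $Z\subsetneq Y$ of codimension $\geq 2$ and to prove that $U:=Y\setminus Z$ satisfies strong approximation off $S$. I would begin with a reduction at the level of the fibration. Applying the theorem on the dimension of the fibres to $f|_Z\colon Z\to X$, one produces a dense open subset $V\subseteq W$ of $X$ such that for every $x\in V$ the fibre $Z\cap f^{-1}(x)$ is either empty or of codimension $\geq 2$ in $f^{-1}(x)$: indeed each irreducible component $Z_i$ of $Z$ satisfies $\dim Z_i\leq\dim Y-2$, so if $Z_i$ dominates $X$ its generic fibre has dimension $\leq(\dim Y-\dim X)-2$, and if it does not one simply discards the proper closed subset $\overline{f(Z_i)}$. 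Since $f$ is smooth with non-empty (integral) fibres it is surjective; as $U$ is open in $Y$ the morphism $f|_U\colon U\to X$ is still smooth, and it fails to be surjective only over a closed subset $E\subseteq X$ of codimension $\geq 2$ (the locus of $x$ with $f^{-1}(x)\subseteq Z$: as $f$ is flat with fibres of pure dimension $\dim Y-\dim X$ and $Z$ has codimension $\geq 2$, this locus has codimension $\geq 2$). Set $R:=X\setminus V$; then $R\cup E$ is a \emph{proper} closed subset of $X$, though in general only of codimension $1$, its codimension-$1$ part coming from components of $Z$ lying over divisors of $X$.

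Now take $(P_v)\in U(\mathbf{A}_K^S)$ with $U(K_v)\neq\emptyset$ for $v\in S$, and fix a finite set $S\subseteq T\subseteq\Omega_K$, a model $\mathscr U$ of $U$ over $\mathscr O_{K,T}$ and open neighbourhoods $\mathcal U_v\ni P_v$ for $v\in T\setminus S$; enlarging $T$, assume $T\setminus S\neq\emptyset$, that $f$, $V$ and $E$ spread out well over $\mathscr O_{K,T}$, and that $(P_v)$ is $\mathscr U$-integral outside $T$. Pushing down yields $(f(P_v))\in X(\mathbf{A}_K^S)$, and $X(K_v)\neq\emptyset$ for $v\in S$. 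By hypothesis (i) (in particular $X$ itself satisfies strong approximation off $S$), $X(K)$ is dense in $X(\mathbf{A}_K^S)$. The crucial step — and the point where the purity phenomenon genuinely intervenes, since $R$ may have codimension $1$ and so cannot be avoided by appealing to purity on $X$ — is that density still allows one to dodge the proper closed subset $R\cup E$: imposing at one auxiliary place $v_0\in T\setminus S$ the open and non-empty condition of lying outside $R\cup E$ (non-empty because $X$ is smooth, so every neighbourhood of $f(P_{v_0})$ in $X(K_{v_0})$ meets the dense open $X\setminus(R\cup E)$), one finds $x\in(X\setminus(R\cup E))(K)$ approximating $f(P_v)$ as closely as we wish for $v\in T\setminus S$, congruent to $f(P_v)$ modulo $\mathfrak m_v$ at the finitely many remaining bad places, and $\mathscr X$-integral elsewhere.

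For such an $x$, the fibre $F:=f^{-1}(x)$ is smooth and geometrically integral, $x\in V\subseteq W$, so by hypothesis (ii) $F$ satisfies purity of strong approximation off $S$; and $F\cap U=F\setminus(Z\cap F)$ is the complement in $F$ of a closed subset of codimension $\geq 2$, so $F\cap U$ satisfies strong approximation off $S$. It remains to lift $(P_v)$ to an adelic point of $F\cap U$. For $v\in S$, hypothesis (iii) provides a point of $F(K_v)$, hence of $(F\cap U)(K_v)$ since $Z\cap F$ is a proper closed subvariety of the positive-dimensional smooth $F$. For $v\in T\setminus S$, smoothness of $f$ at $P_v$ together with the $v$-adic proximity of $x$ to $f(P_v)$ give, via the implicit function theorem, a point $P'_v\in F(K_v)\cap\mathcal U_v$ close to $P_v$, hence lying outside the closed set $Z$. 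For $v\notin T$, since $x\notin E$ and the relevant models are smooth over the strictly henselian ring $\mathscr O_v\cong\mathbf{C}[[t]]$, one lifts $P_v$ to an $\mathscr O_v$-point of a model of $F\cap U$ dominating $\mathscr U$. Strong approximation off $S$ for $F\cap U$ then produces $Q\in(F\cap U)(K)\subseteq U(K)$ lying in $\mathcal U_v$ for $v\in T\setminus S$ and integral for $v\notin T$, which is the sought rational point. I expect the genuine obstacle to be the base-point step of the second paragraph: the naive reflex of pulling back a good codimension-$\geq 2$ open of $X$ fails because the bad locus $R$ is typically of codimension only $1$, and one must instead exploit density of $X(K)$ and an auxiliary place; the fibre-dimension reduction, the implicit-function lift, and the Hensel-type matching of integral points (where the residue fields $\mathbf{C}$ are used) are routine.
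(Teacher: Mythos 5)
Your overall strategy is the one the paper itself follows (there it is packaged as an application of the fibration method of Proposition \ref{methfibr} to the restriction $U\to f(U)$, with base open $W\cap V\cap f(U)$): push the adelic point down to $X$, approximate by a rational point $x$ in the good locus, then apply strong approximation to $f^{-1}(x)\cap U$. The fibre-dimension reduction producing $V$, the auxiliary place used to dodge the codimension-one locus $R$, the implicit-function lift at the places of $T\setminus S$, and the use of hypothesis (iii) at the places of $S$ are all correct and match the paper's argument.

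There is, however, a genuine gap at the places $v\notin T$, and it sits exactly where the purity hypothesis on $X$ --- which you explicitly decline to use beyond ``$X$ itself satisfies strong approximation'' --- is indispensable. You choose $x$ to be $\mathscr{X}$-integral outside $T$ and to avoid $E$ only \emph{generically}. For $v\notin T$ nothing then prevents the reduction $\bar x\in\mathscr{X}(\kappa(v))$ from landing in the special fibre of a model $\mathscr{E}$ of $E$, i.e.\ in the locus over which the entire fibre of $\mathscr{Y}\to\mathscr{X}$ is contained in $\mathscr{Z}$. At such a place every $Q\in F(K)\cap\mathscr{Y}(\mathscr{O}_v)$ reduces into the fibre over $\bar x$, hence into $\mathscr{Z}$, so \emph{no} rational point of $F\cap U$ is $\mathscr{U}$-integral at $v$ and the final step collapses. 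The set of such places is finite but depends on $x$, so it cannot be neutralised by congruences imposed beforehand: your ``finitely many remaining bad places'' are not defined until $x$ has been chosen, and re-choosing $x$ moves them. The repair is to apply hypothesis (i) to the open $f(U)=X\setminus E$, whose complement has codimension $\geq 2$ precisely because $\codim(Z,Y)\geq 2$ and $f$ is flat: strong approximation for $X\setminus E$, taken with respect to a model of $X\setminus E$, is what forces $\bar x$ to avoid $\mathscr{E}_{\kappa(v)}$ for all $v\notin T$. This is how the paper proceeds, and it is the one place where purity of $X$ --- rather than mere strong approximation for $X$ plus a Zariski-density trick --- genuinely enters; your remark that ``purity on $X$'' cannot be appealed to is accurate for the codimension-one locus $R$ but exactly backwards for $E$.
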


Avant d'en donner une démonstration, commençons par énoncer une variante de la méthode de fibration \cite[Proposition 3.1]{MR3007293} sur le corps $K$, que nous utiliserons à plusieurs reprises:

\begin{prop}\label{methfibr}
Soit $f:Y\rightarrow X$ un morphisme lisse de $K$-variétés lisses géométriquement intègres, dont les fibres géométriques sont intègres. Soit $W$ un ouvert non vide de $X$ et $S\subset\Omega_K$ fini tels que:
\begin{enumerate}[label=(\roman*)]
\item les fibres de $f$ au-dessus des $K$-points de $W$ vérifient l'approximation forte hors de~$S$;
\item si $v\in S$, l'application $f^{-1}(W)(K_v)\rightarrow W(K_v)$ est surjective.
\end{enumerate}
Alors, $Y$ vérifie l'approximation forte hors de $S$ si et seulement si $X$ vérifie l'approximation forte hors de $S$.
\end{prop}
\begin{proof}
Commençons par choisir $T\subset\Omega_K$ fini contenant $S$ et $F:\mathscr{Y}\rightarrow\mathscr{X}$ un~$\mathscr{O}_{K,T}$-modèle lisse de $f$ où $\mathscr{Y}$ et $\mathscr{X}$ sont des $\mathscr{O}_{K,T}$-schémas lisses séparés, tels que:
\begin{enumerate}[label=(\alph*)]
\item les fibres géométriques de $F$ sont géométriquement intègres;
\item si $v\not\in T$, l'application $\mathscr{Y}(\mathscr{O}_v)\rightarrow\mathscr{X}(\mathscr{O}_v)$ est surjective.
\end{enumerate}
En effet, le point (a) résulte de \cite[\S9.7]{MR217086}; pour montrer (b), il suffit de vérifier que le morphisme $F$ donné par (a) induit une application surjective au niveau des $\kappa(v)$-points, ce qui est vrai car $\kappa(v)=\mathbf{C}$ est algébriquement clos: le point (b) s'en déduit via le lemme de Hensel.\\

Commençons par supposer que $X$ vérifie l'approximation forte hors de $S$ et montrons que $Y$ vérifie également l'approximation forte hors de $S$. Il s'agit de montrer que si $U_v\subset Y(K_v)$ est un ouvert non vide ($v\in T\backslash S$) et si $Y(K_v)\neq\emptyset$ ($v\in S$), alors $Y(K)$ rencontre $$\displaystyle\prod_{v\in S}Y(K_v)\times\prod_{v\in T\backslash S}U_v\times\prod_{v\not\in T}\mathscr{Y}(\mathscr{O}_v).$$ On peut d'emblée supposer que $U_v\subset f^{-1}(W)$: alors, puisque $f$ est lisse, $f(U_v)\subset W$ est un ouvert de $X(K_v)$ (car $K_v$ est hensélien). Par approximation forte sur $X$ hors de $S$, il existe $N\in X(K)$ tel que $$N\in\prod_{v\in S}X(K_v)\times\prod_{v\in T\backslash S}f(U_v)\times\prod_{v\not\in T}\mathscr{X}(\mathscr{O}_v)$$si bien que N provient d'un $\mathscr{O}_{K,T}$-point $\mathscr{N}$ de $\mathscr{X}$. Posons $Z=f^{-1}(N)$, de sorte que $\mathscr{Z}=F^{-1}(\mathscr{N})$ est un $\mathscr{O}_{K,T}$-modèle de $Z$.

Si $v\not\in T$, il suit alors de (b) que $\mathscr{Z}(\mathscr{O}_v)\neq\emptyset$ et si $v\in S$, il suit de (ii) que $Z(K_v)\neq\emptyset$. Enfin, si $v\in T\backslash S$ alors $U_v\cap Z(K_v)$ est un ouvert non vide de $Z(K_v)$. Le point (i) assure donc que $$\displaystyle\prod_{v\in S} Z(K_v)\times\prod_{v\in T\backslash S} (U_v\cap Z(K_v))\times\prod_{v\not\in T}\mathscr{Z}(\mathscr{O}_v)$$ rencontre $Z(K)$, ce qui conclut.\\

Supposons maintenant que $Y$ vérifie l'approximation forte hors de $S$ et montrons que $X$ vérifie également l'approximation forte hors de $S$. Il s'agit de montrer que si $V_v\subset X(K_v)$ est un ouvert non vide ($v\in T\backslash S$) et si $X(K_v)\neq\emptyset$ ($v\in S$), alors $X(K)$ rencontre $$\displaystyle\prod_{v\in S}X(K_v)\times\prod_{v\in T\backslash S}V_v\times\prod_{v\not\in T}\mathscr{X}(\mathscr{O}_v)$$
et pour ce faire, il suffit de démontrer que $Y(K)$ rencontre
$$\displaystyle\prod_{v\in S}Y(K_v)\times\prod_{v\in T\backslash S}f^{-1}(V_v)\times\prod_{v\not\in T}\mathscr{Y}(\mathscr{O}_v).$$
Or, pour $v\in S$ on a $X(K_v)\neq\emptyset$, donc le théorème des fonctions implicites assure que~$W(K_v)\neq\emptyset$. On déduit alors de (ii) que $Y(K_v)\neq\emptyset$ et on conclut en appliquant l'approximation forte sur $Y$ hors de $S$.
\end{proof}

Pour notre démonstration de la proposition \ref{purmethfib}, le lemme géométrique suivant est bien utile et une démonstration en est par exemple donnée dans \cite[Proposition 3.5]{MR4030258} :

\begin{lem}\label{dimfibre}
Soit $f:Y\rightarrow X$ un morphisme fidèlement plat entre variétés géométriquement intègres sur un corps $k$ et $c>0$ un entier. Si $U\subset Y$ est un ouvert vérifiant $\codim(Y\backslash U,Y)\geq c$, alors il existe un ouvert dense $V\subset X$ tel que pour tout $x\in V(\overline{k})$ on ait $$\codim(f^{-1}(x)\backslash (U\cap f^{-1}(X)),f^{-1}(x))\geq c.$$
\end{lem}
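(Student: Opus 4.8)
Le plan est de contrôler la dimension des fibres composante par composante. Posons $Z=Y\backslash U$, fermé de $Y$ vérifiant $\codim(Z,Y)\geq c$, c'est-à-dire $\dim Z\leq\dim Y-c$. Comme $X$ et $Y$ sont géométriquement intègres, on commencerait par étendre les scalaires à $\overline{k}$: l'intégrité est préservée, $f_{\overline{k}}$ demeure fidèlement plat, et $\codim(Z_{\overline{k}},Y_{\overline{k}})=\codim(Z,Y)$ puisque les dimensions des fermés sont invariantes par extension du corps de base. On se ramènerait ainsi au cas $k=\overline{k}$, où un point $x\in V(\overline{k})$ est fermé et $f^{-1}(x)$ est une $k$-variété. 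La platitude de $f$ jointe à l'intégrité de $X$ et $Y$ donne alors, via la formule de dimension des fibres d'un morphisme plat, que toute fibre non vide est équidimensionnelle de dimension $d:=\dim Y-\dim X$, la fidèle platitude garantissant que toutes les fibres sont effectivement non vides.

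On décomposerait ensuite $Z=\bigcup_{j=1}^{m}Z_j$ en composantes irréductibles, chacune vérifiant $\dim Z_j\leq\dim Y-c$, et l'on examinerait le morphisme induit $f|_{Z_j}:Z_j\rightarrow X$. Si $f|_{Z_j}$ n'est pas dominant, sa fermeture $T_j:=\overline{f(Z_j)}$ est un fermé propre de $X$ et $f^{-1}(x)\cap Z_j=\emptyset$ dès que $x\notin T_j$. Si $f|_{Z_j}$ est dominant, la platitude générique, combinée à la semi-continuité supérieure de la dimension des fibres, fournirait un ouvert dense $V_j\subset X$ au-dessus duquel $f|_{Z_j}$ est plat, de sorte que pour $x\in V_j$ l'intersection $Z_j\cap f^{-1}(x)$ soit équidimensionnelle de dimension $\dim Z_j-\dim X\leq d-c$.

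Il resterait à poser $V$ comme l'intersection des $V_j$ (pour $j$ dominant) et des $X\backslash T_j$ (pour $j$ non dominant): c'est une intersection finie d'ouverts denses, donc un ouvert dense de la variété irréductible $X$. Pour $x\in V(\overline{k})$, on aurait $\dim\bigl(Z\cap f^{-1}(x)\bigr)\leq d-c$; comme $f^{-1}(x)$ est équidimensionnelle de dimension $d$ et caténaire, il en résulterait $\codim\bigl(Z\cap f^{-1}(x),f^{-1}(x)\bigr)\geq c$, et l'on conclurait en observant que $Z\cap f^{-1}(x)=f^{-1}(x)\backslash\bigl(U\cap f^{-1}(x)\bigr)$.

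Le point principal à soigner est le contrôle générique de la dimension des fibres de chaque $f|_{Z_j}$ dominant: il faut garantir que cette dimension ne saute pas au-dessus d'un ouvert dense de $X$, ce qu'assure précisément la platitude générique, puis recoller cette information à l'équidimensionnalité des fibres de $f$ lui-même afin de convertir la majoration de dimension en minoration de codimension. C'est également ici qu'interviendrait de façon essentielle l'hypothèse de géométrie intègre, qui autorise le passage à $\overline{k}$ et valide les formules de dimension utilisées.
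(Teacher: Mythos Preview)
The paper does not actually prove this lemma: it is quoted from \cite[Proposition 3.5]{MR4030258} and used as a black box. Your argument is correct and is essentially the standard one (and presumably close to what appears in the cited reference): reduce to $k=\overline{k}$, decompose $Z=Y\setminus U$ into irreducible components, bound the generic fibre dimension of each $f|_{Z_j}$ via generic flatness or Chevalley semicontinuity, and convert the resulting dimension bound into a codimension bound using that the fibres of the flat morphism $f$ are equidimensional of dimension $\dim Y-\dim X$.

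Two minor remarks. First, invoking both generic flatness and upper semicontinuity of fibre dimension is slightly redundant: either one alone yields the open set $V_j$ on which $\dim(Z_j\cap f^{-1}(x))\leq\dim Z_j-\dim X$. Second, for the final step you do not strictly need the fibres of $f|_{Z_j}$ to be equidimensional over $V_j$; the inequality $\dim(Z_j\cap f^{-1}(x))\leq\dim Z_j-\dim X$ is all that is required, and it already follows from the semicontinuity argument. The catenary property you invoke for the conversion to codimension is indeed available, since fibres are varieties over a field.
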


On en déduit aussitôt une démonstration de la méthode de fibration pour la pureté de l'approximation forte:

\begin{proof}[Démonstration de la proposition \ref{purmethfib}]
Soit $U$ un ouvert de $Y$ contenant $Y^{(1)}$. Suivant le lemme \ref{dimfibre}, il existe un ouvert dense $V$ de $X$ tel que pour tout $v\in V(K)$ on ait $f^{-1}(v)^{(1)}\subset f^{-1}(v)\cap U$. Il s'agit alors d'appliquer la proposition \ref{methfibr} au morphisme $g:U\rightarrow f(U)$ induit par $f$, et à l'ouvert $W'=W\cap V\cap f(U)$.

D'une part, $f(U)$ vérifie l'approximation forte hors de $S$ par hypothèse de pureté sur~$X$, car $f$ étant fidèlement plat, le fait que $Y^{(1)}\subset U$ assure que $X^{(1)}\subset f(U)$.

De plus, si $v\in W'(K)$, le choix de $V$ assure que la fibre $g^{-1}(v)$ est un ouvert de $f^{-1}(v)$ contenant $f^{-1}(v)^{(1)}$. Il suit par hypothèse que $g^{-1}(v)$ vérifie l'approximation forte hors de~$S$, ce qui montre que le point (i) de la proposition \ref{methfibr} est vérifié.

Enfin, le point (ii) de la proposition \ref{methfibr} est vérifié. En effet, si $v\in S$ et $w'\in W'(K_v)$, l'ouvert $g^{-1}(w')(K_v)$ est dense dans $f^{-1}(w')(K_v)$ par lisseté de $f$ et théorème des fonctions implicites: en particulier il est non vide, ce qui conclut.
\end{proof}

\subsection{Une méthode de descente pour la pureté}

Dans cette sous-section, on démontre une méthode de descente pour la pureté de l'approximation forte, qui est une variante de \cite[Théorème 4.4]{MR3782219}.

\begin{prop}\label{purdesc}
Soit $S$ un ensemble fini non vide de places de $K$, $G$ un $K$-groupe linéaire,~$X$ une $K$-variété et $Y \xrightarrow{f} X$ un $G$-torseur à droite. Si pour tout $\left[\xi\right]\in \coh^1(K,G)$ le tordu~$Y^{\xi}$ vérifie la pureté de l'approximation forte hors de $S$, avec en outre $Y^\xi(K_v)\neq\emptyset$ pour $v\in S$, alors $X$ vérifie la pureté de l'approximation forte hors de $S$.
\end{prop}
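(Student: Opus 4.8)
Le plan est de reprendre la structure de la méthode de descente classique \cite[Théorème 4.4]{MR3782219} en la «purifiant». Soit $U$ un ouvert de $X$ contenant $X^{(1)}$, dont on veut montrer qu'il vérifie l'approximation forte hors de $S$. On pose $V=f^{-1}(U)$, qui est un ouvert de $Y$ contenant $Y^{(1)}$ puisque $f$ est fidèlement plat (tout point de codimension $1$ de $Y$ domine un point de codimension $0$ ou $1$ de $X$, donc est au-dessus de $U$), et $g:V\to U$ est encore un $G$-torseur à droite. On fixe une famille finie non vide $(z_v)_{v\in S}$ de points $U(K_v)$ (ce qui est possible dès que $X(K_v)\neq\emptyset$ pour $v\in S$, sans quoi il n'y a rien à démontrer), des voisinages $\mathscr{U}_v\subset U(K_v)$ de $z_v$ pour $v\in S$, des ouverts non vides $\mathscr{U}_v\subset U(K_v)$ pour $v$ dans un ensemble fini $T\supset S$ à l'extérieur de $S$, et un modèle entier $\mathscr{U}$ de $U$ sur $\mathscr{O}_{K,T}$ ; il s'agit de trouver un point de $U(K)$ dans l'ouvert adélique correspondant.

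La première étape est le passage aux torseurs tordus. Le torseur $Y\xrightarrow{f} X$ définit une classe de $\coh^1(X,G)$ dont l'évaluation aux points rationnels donne, pour chaque $x\in U(K)$, une classe $[\xi_x]\in\coh^1(K,G)$ ; et l'on a la partition bien connue $U(K)=\bigsqcup_{[\xi]}g^{\xi}(V^{\xi}(K))$ indexée par les classes $[\xi]$ telles que $V^\xi(K)\neq\emptyset$, où $V^{\xi}=f^{-1}(U)^{\xi}$ est un ouvert de $Y^\xi$ contenant $(Y^\xi)^{(1)}$ (la torsion est un isomorphisme localement trivial pour la topologie étale, donc préserve les codimensions). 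L'hypothèse de pureté sur $Y^\xi$ assure que chaque $V^\xi$ vérifie l'approximation forte hors de $S$. Le c\oe ur de l'argument — et le point le plus délicat — consiste à montrer que l'ensemble des classes $[\xi]$ qui interviennent effectivement, c'est-à-dire celles pour lesquelles $V^\xi(K_v)\neq\emptyset$ pour \emph{toute} place $v$ (adéliquement compatibles avec l'ouvert de départ), est \emph{fini}, et que pour chacune on peut trouver localement un relevé dans $V^\xi$. C'est ici qu'intervient le théorème d'existence de Riemann via le lemme \ref{lemJLCT} : sur le corps des fonctions d'une courbe complexe, la classe dans $\coh^1(K,G)$ est contrôlée par un nombre fini de données de ramification, de sorte que seul un nombre fini de tordus est compatible avec les contraintes locales imposées.

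La deuxième étape est l'assemblage. Pour chaque classe $[\xi]$ de la famille finie retenue, on relève les contraintes locales $\mathscr{U}_v$ (quitte à les rétrécir) en des ouverts non vides $\mathscr{V}^\xi_v\subset V^\xi(K_v)$ via la surjectivité locale du torseur $g^\xi$ (lissité et théorème des fonctions implicites pour $v\in T$, lemme de Hensel sur un modèle entier pour $v\notin T$, en utilisant que $\kappa(v)=\mathbf C$ est algébriquement clos) ; l'hypothèse $Y^\xi(K_v)\neq\emptyset$ pour $v\in S$ garantit que les fibres locales en $S$ sont non vides, ce qui permet d'invoquer l'approximation forte de $V^\xi$. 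On obtient ainsi un point de $V^\xi(K)$ dans l'ouvert adélique relevé, dont l'image par $g^\xi$ fournit le point de $U(K)$ voulu. L'obstacle principal reste la finitude de l'ensemble des tordus pertinents : c'est exactement là que la géométrie de la courbe complexe (et non l'arithmétique d'un corps de nombres) est indispensable, et c'est ce que le lemme \ref{lemJLCT}, adossé au théorème d'existence de Riemann, fournit. Une fois cette finitude acquise, la conclusion s'obtient par réunion finie d'ouverts adéliques non vides rencontrant $U(K)$.
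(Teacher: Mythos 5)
Votre mise en place (passage à $V=f^{-1}(U)$, qui contient $Y^{(1)}$ par platitude fidèle de $f$, torsion du torseur, utilisation de la pureté sur $Y^{\xi}$ puis redescente) est conforme à celle du texte, mais l'étape que vous identifiez vous-même comme «le point le plus délicat» est traitée par un argument qui n'est pas le bon et qui ne se referme pas. Vous proposez de montrer que l'ensemble des classes $[\xi]$ «adéliquement compatibles» est fini, puis de conclure «par réunion finie d'ouverts adéliques». D'une part cette finitude n'est ni ce qu'affirme le lemme \ref{lemJLCT} (qui énonce la \emph{surjectivité} de l'application diagonale $\coh^1(K,G)\rightarrow\bigoplus_{v\not\in S}\coh^1(K_v,G)$), ni un fait évident — la fibre de cette application au-dessus d'un élément donné peut très bien être infinie. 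D'autre part, même en l'admettant, l'argument ne conclut pas : pour un ouvert adélique $W\subset U(\mathbf{A}_K^S)$ donné, écrire $U(K)$ comme réunion finie des $g^{\xi}(V^{\xi}(K))$ ne dit pas \emph{quel} tordu $\xi$ fournit un relèvement non vide $(g^{\xi})^{-1}(W)$ dans $V^{\xi}(\mathbf{A}_K^S)$ ; or c'est précisément ce dont on a besoin pour invoquer l'approximation forte sur $V^{\xi}$.

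Le mécanisme correct, qui est celui du texte, consiste à partir d'un point adélique $\alpha\in U(\mathbf{A}_K^S)$, à considérer son image $\varphi(\alpha)\in\bigoplus_{v\not\in S}\coh^1(K_v,G)$ par l'application cobord de la suite exacte d'ensembles associée au torseur sur les adèles, puis à utiliser la surjectivité du lemme \ref{lemJLCT} pour trouver une classe globale $\sigma=[\xi]$ telle que $\psi(\sigma)=\varphi(\alpha)$. Après torsion par $\xi$, l'image $\varphi'(\alpha)$ devient triviale et l'exactitude de la ligne adélique assure que $\alpha$ se relève en $\beta\in V^{\xi}(\mathbf{A}_K^S)$ ; c'est là, et seulement là, qu'interviennent le théorème d'existence de Riemann et la non-vacuité de $S$. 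La pureté sur $Y^{\xi}$ permet alors d'approcher $\beta$ par un point de $V^{\xi}(K)$, dont l'image approche $\alpha$. Autrement dit, le tordu pertinent est choisi \emph{en fonction du point adélique à approcher}, et aucune finitude n'est requise.
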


Cette méthode de descente utilise le lemme suivant, dont la démonstration, qui repose sur le théorème d'existence de Riemann, suit en filigrane celle de \cite[Théorème~2.2]{MR2029865}. Avant de l'énoncer, introduisons la notation suivante: si $(E_i, e_i)_{i\in I}$ est une famille d'ensembles pointés, on note $\prod_{i\in I}^{'}E_i$ l'ensemble des $(x_i)\in\prod_{i\in I}E_i$ tels que $x_i=e_i$ pour un ensemble cofini d'indices $i$, pointé en $(e_i)$.

\begin{lem}\label{lemJLCT}\cite[Théorème 3.3]{MR3782219}
Soit $S$ un ensemble fini non vide de places de $K$ et~$G$ un $K$-groupe linéaire. Alors l'application diagonale$$\coh^1(K,G)\rightarrow\sideset{}{'}\prod_{v\not\in S}\coh^1(K_v,G)$$est surjective.
\end{lem}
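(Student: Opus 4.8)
The statement to prove is that for a linear algebraic group $G$ over $K = \mathbf{C}(\Gamma)$ and a non-empty finite set $S \subset \Omega_K$, the diagonal map $\coh^1(K,G) \to \bigoplus_{v \notin S} \coh^1(K_v, G)$ is surjective. Since we must hit an arbitrary element of the restricted product, it suffices to prove this for a \emph{finite} collection of places $v_1, \dots, v_r \notin S$ and prescribed local classes $\xi_i \in \coh^1(K_{v_i}, G)$: we seek a global class restricting to each $\xi_i$ at $v_i$ and restricting to the trivial class at finitely many further places (or at least realising the ``cofinite triviality'' of the target). The plan is to fix an $\mathscr{O}_{K,T}$-model $\mathscr{G}$ of $G$ that is a smooth group scheme over an open subset $U = \spec \mathscr{O}_{K,T}$ of $\Gamma$ (enlarging $T$ to contain $S$ and all the $v_i$), and to produce the desired cocycle geometrically.

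First I would reduce the local data to ramification data: each $\xi_i \in \coh^1(K_{v_i},G)$ is represented by a $G$-torsor over $\spec K_{v_i} = \spec \mathbf{C}((t))$, hence (since $\mathbf{C}((t))$ has cohomological dimension $1$ and $\gal(\overline{K_{v_i}}/K_{v_i}) = \widehat{\mathbf{Z}}$) is governed by the image in $\coh^1$ of the finite quotients; concretely, it corresponds to a conjugacy class of homomorphisms from the tame fundamental group of the punctured disc into the component group / a finite subgroup after trivialising. The key step is then to invoke the Riemann existence theorem in the algebraic form used in \cite{MR2029865}: on the open curve $\Gamma \setminus \{v_1, \dots, v_r\}$ (complex, hence with a well-understood topological fundamental group, free on $2g + r - 1$ generators if $\Gamma$ has genus $g$), any assignment of local monodromy data at the punctures $v_i$ that is ``admissible'' (the product-of-conjugacy-classes condition being automatically satisfiable here because we may choose the images freely and $\pi_1^{\text{top}}$ of the punctured curve is free once $r \geq 1$, and if $r = 0$ we first add an auxiliary puncture inside $S$) extends to an actual finite étale cover, equivalently a continuous representation of $\pi_1^{\text{\'et}}(\Gamma \setminus\{v_i\})$, and hence—after the usual GAGA/Grothendieck comparison over $\mathbf{C}$—to an algebraic $G$-torsor over a Zariski-open subset of $\Gamma$ with the prescribed local behaviour at each $v_i$. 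Restricting this torsor along $\spec K \to \Gamma$ produces the sought class in $\coh^1(K,G)$.

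Two points need care. One is passing from the \emph{linearity} of $G$ to a statement about finite covers: here I would use that $\coh^1(\spec \mathscr{O}_v, \mathscr{G})$ surjects onto (indeed equals, by Lang's theorem over the residue field $\mathbf{C}$ and smoothness) the part of $\coh^1(K_v, G)$ unramified at $v$, so that only the ramified part at the chosen $v_i$ must be matched, and that part is controlled by the tame fundamental group since $\mathbf{C}$ has characteristic $0$—so the monodromy lands in $\widehat{\mathbf{Z}}(1)$ at each puncture and can be realised inside $G(\mathbf{C})$ via a homomorphism from a finite cyclic group, which always extends over the punctured curve by the freeness of its fundamental group (this is exactly where non-emptiness of $S$, guaranteeing we may keep a puncture and thus keep the group free, or simply that $r \geq 1$ after possibly adding one, is used). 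The second point is that the constructed global torsor is automatically trivial at \emph{almost all} places: this is clear since it extends to a $G$-torsor over a dense open $U \subset \Gamma$, hence is unramified and in fact trivial (by Lang again) at every $v \in U(\mathbf{C})$. The main obstacle I anticipate is the careful bookkeeping in the Riemann-existence step—making the comparison between the topological fundamental group of the punctured Riemann surface, its profinite completion, the \'etale fundamental group, and the cocycle-theoretic description of $\coh^1$ fully rigorous, and handling the passage from a finite subgroup of $G(\mathbf{C})$ back to $G$ itself (via the fact that any continuous $\pi_1 \to G(\mathbf{C})$ with finite image gives a $G$-torsor by extension of structure group)—but this is precisely the content that \cite[Théorème 2.2]{MR2029865} packages, so the proof should proceed by citing that machinery with the punctures placed outside $S$.
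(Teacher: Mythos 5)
The paper gives no proof of this lemma: it is quoted as \cite[Théorème 3.3]{MR3782219}, with the sole remark that its proof rests on the Riemann existence theorem and follows that of \cite[Théorème 2.2]{MR2029865}. Your sketch reconstructs precisely that argument --- reduction to finitely many places, triviality of unramified classes because the residue fields equal $\mathbf{C}$, and realization of the prescribed local monodromies via the freeness of the fundamental group of $\Gamma$ punctured at the $v_i$ and at a point of $S$ (which is exactly where the non-emptiness of $S$ enters) --- so it takes the same approach as the proof the paper points to.
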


\begin{proof}[Démonstration de la proposition \ref{purdesc}]
Considérons $U$ un ouvert de $X$ contenant les points de codimension $1$. On dispose des suites exactes suivantes qui s'insèrent dans les lignes du diagramme commutatif:
\begin{equation}
\begin{tikzcd}
Y(K)\arrow[r]\arrow[d]& X(K)\arrow[r]\arrow[d] & \coh^1(K,G)\arrow[d," \psi"] \\
Y(\mathbf{A}_K^S) \arrow[r]& X(\mathbf{A}_K^S) \arrow[r,"\varphi"]& \prod_{v\not\in S}^{'}\coh^1(K_v,G)
\end{tikzcd}
\end{equation}
où la flèche $\psi$ est surjective d'après le lemme \ref{lemJLCT}. Considérons $\alpha\in U(\mathbf{A}_K^S)\subset X(\mathbf{A}_K^S)$: la surjectivité de $\psi$ donne $\sigma=\left[\xi\right]\in \coh^1(K,G)$ tel que $\psi(\sigma)=\varphi(\alpha)$. Considérons maintenant le diagramme commutatif suivant, où les lignes sont exactes:
\begin{equation}\label{secex}
\begin{tikzcd}
Y^{\xi}(K)\arrow[r]\arrow[d]& X(K)\arrow[r]\arrow[d] & \coh^1(K,G^{\xi})\arrow[d," \psi'"] \\
Y^{\xi}(\mathbf{A}_K^S) \arrow[r]& X(\mathbf{A}_K^S) \arrow[r,"\varphi'"]& \prod_{v\not\in S}^{'}\coh^1(K_v,G^{\xi})
\end{tikzcd}.
\end{equation}
Le choix de $\sigma$ assure que $\varphi'(\alpha)=\left\lbrace * \right\rbrace$. Puisque $f^{\xi}$ est fidèlement plat, l'ouvert $V={\left(f^{\xi}\right)}^{-1}(U)$ contient les points de codimension $1$ de $Y^{\xi}$. L'exactitude de la ligne du bas de~(\ref{secex}) assure que $\alpha$ provient de $\beta\in  V(\mathbf{A}_K^S)$. Puisque $V$ contient les points de codimension~$1$ de $Y^{\xi}$, il vérifie l'approximation forte hors de $S$. Il s'ensuit que $\beta$ peut être approché par un point rationnel $\zeta\in V(K)$: l'image de $\zeta$ dans $U(K)$ approche alors $\alpha$ dans $U(\mathbf{A}_K^S)$, ce qui conclut.
\end{proof}

\section{Espaces homogènes sous un groupe semi-simple}

Cette section est dédiée à la démonstration du théorème \ref{thpureh} dont on rappelle l'énoncé:

\thpureh*
\subsection{Résultats préliminaires}

Sur un corps de nombres, Cao et Xu \cite[Proposition~$3.6$]{MR3893760} et Wei \cite[Lemma 2.1]{MR4204537} ont montré qu'on a pureté de l'approximation forte pour les espaces affines. Le résultat vaut encore sur le corps des fonctions d'une courbe algébrique complexe:

\begin{prop}\label{affAf}
Si $S$ est un ensemble fini non vide de places de $K$, l'espace affine $\mathbf{A}^n_K$ vérifie la pureté de l'approximation forte hors de $S$.
\end{prop}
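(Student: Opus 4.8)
Le plan est de raisonner par récurrence sur $n$, en faisant descendre la pureté de l'approximation forte le long de la projection linéaire $\pi:\mathbf{A}^n_K\rightarrow\mathbf{A}^{n-1}_K$ sur les $n-1$ premières coordonnées, au moyen de la méthode de fibration pour la pureté établie à la proposition \ref{purmethfib}. Ainsi tout le contenu géométrique est absorbé par cette proposition (elle-même tributaire du lemme \ref{dimfibre}), et le seul ingrédient véritablement arithmétique est l'approximation forte pour le groupe additif, qui est élémentaire.

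L'initialisation est le cas $n=1$: un fermé de codimension $2$ de la courbe affine $\mathbf{A}^1_K$ étant vide, la pureté de l'approximation forte hors de $S$ pour $\mathbf{A}^1_K$ équivaut à ce que $\mathbf{A}^1_K$ vérifie l'approximation forte hors de $S$. Comme $S$ est non vide, ceci est classique: étant donné des conditions d'approximation en un nombre fini de places hors de $S$, le théorème de Riemann--Roch fournit une fonction rationnelle sur $\Gamma$ les réalisant, régulière en dehors de ces places et d'une place fixée $v_0\in S$ où l'on s'autorise un pôle d'ordre arbitrairement grand.

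Pour l'hérédité, on suppose $n\geq 2$ et la proposition acquise pour $\mathbf{A}^{n-1}_K$, et l'on applique la proposition \ref{purmethfib} au morphisme $\pi:\mathbf{A}^n_K\rightarrow\mathbf{A}^{n-1}_K$ et à l'ouvert $W=\mathbf{A}^{n-1}_K$ tout entier. Le morphisme $\pi$ est lisse, entre $K$-variétés lisses géométriquement intègres, et ses fibres géométriques sont des droites affines, donc intègres; l'hypothèse (i) de la proposition \ref{purmethfib} est l'hypothèse de récurrence; l'hypothèse (ii) résulte du cas $n=1$, puisque les fibres de $\pi$ au-dessus des $K$-points sont isomorphes à $\mathbf{A}^1_K$; enfin l'hypothèse (iii) est immédiate, car pour $v\in S$ l'application $\mathbf{A}^n(K_v)\rightarrow\mathbf{A}^{n-1}(K_v)$ est la surjection $K_v^n\rightarrow K_v^{n-1}$. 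On conclut que $\mathbf{A}^n_K$ vérifie la pureté de l'approximation forte hors de $S$. Aucune de ces étapes ne présente de difficulté substantielle: le seul point demandant un peu de soin est la vérification des hypothèses de lissité et d'intégralité des fibres requises par la proposition \ref{purmethfib}, immédiate ici puisque $\pi$ est la projection d'un fibré vectoriel trivial.
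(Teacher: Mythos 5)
Votre démonstration est correcte et suit pour l'essentiel la même stratégie que celle de l'article: récurrence sur $n$ via la méthode de fibration de la proposition \ref{purmethfib} appliquée à une projection coordonnée, avec pour seul ingrédient arithmétique l'approximation forte pour $\mathbf{A}^1_K$ hors de $S$ (que l'article cite comme énoncé d'algèbre commutative là où vous la redémontrez par Riemann--Roch). La seule différence, purement cosmétique, est que vous fibrez en droites au-dessus de $\mathbf{A}^{n-1}_K$ alors que l'article projette sur $\mathbf{A}^1_K$ avec fibres $\mathbf{A}^{n-1}_K$, ce qui échange simplement les rôles de l'hypothèse de récurrence et du cas $n=1$ dans les conditions (i) et (ii).
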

\begin{proof}
Commençons par remarquer que l'approximation forte hors de $S$ vaut pour~$\mathbf{A}_K^1$, ce qui correspond à l'énoncé d'algèbre commutative \cite[Theorem 4.11]{MR1972204}.

On montre ensuite par récurrence sur $n$ le résultat souhaité, le cas $n=1$ étant acquis. Supposons $n\geq 2$ et considérons $p:\mathbf{A}^n_K\rightarrow\mathbf{A}^1_K$ la projection suivant la dernière coordonnée et appliquons-lui la méthode de fibration pour la pureté de la proposition \ref{purmethfib}, en posant $W=\mathbf{A}^1_K$. La variété $\mathbf{A}^1_K$ vérifie l'approximation forte d'après le cas $n=1$, donc le point~(i) de la proposition \ref{purmethfib} est vérifié. Le point (ii) de la proposition \ref{purmethfib} est également vérifié par hypothèse de récurrence, car pour $x\in\mathbf{A}^1_K(K)$ on a $p^{-1}(x)=\mathbf{A}^{n-1}_K$. Enfin, le point (iii) de la proposition \ref{purmethfib} est vérifié car la projection $p(K_v):\mathbf{A}^n_K(K_v)\rightarrow\mathbf{A}^1_K(K_v)$ est clairement surjective.
\end{proof}

\begin{defi}
Soit $A$ une algèbre étale non nulle sur un corps $k$. L'unique sous-variété ouverte minimale $R$ de $\Res_{A/k}(\mathbf{G}_a)$ contenant le tore $\Res_{A/k}(\mathbf{G}_m)$, stable sous l'action de~$\Res_{A/k}(\mathbf{G}_m)$ sur $\Res_{A/k}(\mathbf{G}_a)$ et vérifiant $$\codim(\Res_{A/k}(\mathbf{G}_a)\backslash R,\Res_{A/k}(\mathbf{G}_a))\geq 2$$ est appelée \textit{la sous-variété torique standard} de $\Res_{A/k}(\mathbf{G}_a)$. Plus précisément, $R\otimes_k\overline{k}$ est le complémentaire de la réunion de tous les sous-espaces vectoriels de codimension $2$ dans~$\Res_{A/k}(\mathbf{G}_a)\otimes_k\overline{k}$.
\end{defi}

\begin{cor}\label{AFtor}
Soit $A$ une $K$-algèbre étale non nulle et~$R$ la sous-variété torique standard de $\Res_{A/K}(\mathbf{G}_a)$. Alors $R$ vérifie la pureté de l'approximation forte hors de tout ensemble fini non vide de places de $K$.
\end{cor}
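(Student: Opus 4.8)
The plan is to deduce this from Proposition \ref{affAf} together with the fibration method for purity, Proposition \ref{purmethfib}. Write $A=\prod_{i=1}^r L_i$ as a product of finite field extensions $L_i/K$, so that $\Res_{A/K}(\mathbf{G}_a)=\prod_i\Res_{L_i/K}(\mathbf{G}_a)$, and similarly $\Res_{A/K}(\mathbf{G}_m)=\prod_i\Res_{L_i/K}(\mathbf{G}_m)$. Over $\overline{K}$ each $\Res_{L_i/K}(\mathbf{G}_a)$ becomes $\mathbf{A}^{n_i}$ with $n_i=[L_i:K]$, the torus $\Res_{L_i/K}(\mathbf{G}_m)$ becomes the complement of the $n_i$ coordinate hyperplanes, and the standard toric subvariety $R$ becomes $\mathbf{A}^n\setminus Z$ with $Z$ of codimension $\geq 2$. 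Since $\Res_{A/K}(\mathbf{G}_a)\cong\mathbf{A}^n_K$ as a $K$-variety (choosing a $K$-basis of $A$), the corollary is exactly the assertion that this particular codimension-$\geq 2$ open subset of affine space satisfies strong approximation off $S$, which would already follow from Proposition \ref{affAf} if we knew $R$ \emph{is} an open subvariety of $\mathbf{A}^n_K$ whose complement has codimension $\geq 2$ — but that is precisely its defining property. So in fact the corollary is an immediate consequence of Proposition \ref{affAf}: $R\subset\Res_{A/K}(\mathbf{G}_a)\cong\mathbf{A}^n_K$ is the complement of a closed subset of codimension $\geq 2$, hence by purity of strong approximation for $\mathbf{A}^n_K$ it satisfies strong approximation off any non-empty finite $S$; moreover every open subset of $R$ whose complement in $R$ has codimension $\geq 2$ is again such an open subset of $\mathbf{A}^n_K$, so $R$ inherits the full purity property.

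Let me spell this out. Fix a non-empty finite $S\subset\Omega_K$ and let $U\subset R$ be open with $\codim(R\setminus U,R)\geq 2$. Then $U$ is open in $\mathbf{A}^n_K=\Res_{A/K}(\mathbf{G}_a)$, and $\mathbf{A}^n_K\setminus U=(\mathbf{A}^n_K\setminus R)\cup(R\setminus U)$ is a union of two closed subsets each of codimension $\geq 2$, hence itself of codimension $\geq 2$. By Proposition \ref{affAf}, $\mathbf{A}^n_K$ satisfies purity of strong approximation off $S$, so $U$ satisfies strong approximation off $S$. As $U$ was an arbitrary codimension-$\geq 2$ open of $R$, the variety $R$ satisfies purity of strong approximation off $S$.

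There is essentially no obstacle here: the only point requiring a word of care is that $\Res_{A/K}(\mathbf{G}_a)$, as a $K$-scheme, is indeed isomorphic to $\mathbf{A}^n_K$ with $n=\dim_K A$ — this is standard, a $K$-basis of $A$ giving the coordinates — and that the defining condition on $R$ guarantees $\codim(\mathbf{A}^n_K\setminus R,\mathbf{A}^n_K)\geq 2$, which holds by construction. (The existence and uniqueness of $R$ with these properties is granted by the preceding definition and need not be re-examined here.) Everything else is the elementary observation that the class of codimension-$\geq 2$ complements is stable under the operation ``take a codimension-$\geq 2$ complement inside one of them'', so purity for $\mathbf{A}^n_K$ propagates to purity for $R$.
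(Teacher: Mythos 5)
Votre démonstration est correcte et suit essentiellement la même voie que celle de l'article, qui se réduit à la phrase « cela suit de la proposition \ref{affAf} et du fait que $\Res_{A/K}(\mathbf{G}_a)\simeq\mathbf{A}^d_K$ » : vous ne faites qu'expliciter que le complémentaire d'un fermé de codimension $\geq 2$ de $R$ est encore le complémentaire d'un fermé de codimension $\geq 2$ de $\mathbf{A}^d_K$. La mention initiale de la méthode de fibration est superflue, comme vous le constatez vous-même.
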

\begin{proof}
Cela suit de la proposition \ref{affAf} et du fait que $\Res_{A/K}(\mathbf{A}^1_A)\simeq\mathbf{A}^d_K$, où $d=\dim_KA$.
\end{proof}

\subsection{Cas des groupes semi-simples simplement connexes}

Cette sous-section est dédiée à la démonstration du théorème suivant, établi par Cao, Liang et Xu \cite{MR4030258} dans le cas d'un corps de nombres lorsque le groupe est quasi-déployé, cette hypothèse étant automatiquement satisfaite sur le corps $K$:

\begin{thm}\label{thpurgrp}
Si $S$ désigne un ensemble fini non vide de places de $K$ et $G$ un $K$-groupe semi-simple, simplement connexe, alors $G$ vérifie la pureté de l'approximation forte hors de $S$.
\end{thm}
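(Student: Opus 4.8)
The plan is to run the fibration method for purity (Proposition~\ref{purmethfib}) along a fibration of $G$ coming from the Bruhat decomposition, and to verify its hypotheses by a dévissage on the rank of $G$ in the spirit of the proof of \cite[Theorem~3.6]{MR4030258}; the base of the induction, $G$ trivial, is immediate.

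Since $K$ has cohomological dimension $\le 1$, $G$ is quasi-split; fix a Borel subgroup $B=T\ltimes U$ over $K$ and its opposite $B^{-}=T\ltimes U^{-}$. Because $G$ is simply connected the simple coroots form a Galois-stable basis of $X_{*}(T)$, so $T$ is a quasi-trivial torus, $T\simeq\prod_{i}\Res_{L_{i}/K}\mathbf{G}_{m}$; and $U,U^{-}$, being split unipotent over the perfect field $K$, are $K$-isomorphic to affine spaces. Hence the open Bruhat cell $\Omega=U^{-}\cdot B$ is $K$-isomorphic to $U^{-}\times T\times U$. Using the fibration method and the decomposition of $G$ into Weil restrictions of absolutely almost simple quasi-split simply connected groups, one may moreover assume $G$ absolutely almost simple. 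The obstruction to applying the fibration method to $\Omega$ directly is the torus factor $T$, which does not verify strong approximation; this is exactly what Corollary~\ref{AFtor} is designed to repair. Replacing each $\Res_{L_{i}/K}\mathbf{G}_{m}$ by the standard toric subvariety $R_{i}\subset\Res_{L_{i}/K}\mathbf{G}_{a}$, one embeds $\Omega$ as a dense open subvariety of $Y:=U^{-}\times\bigl(\prod_{i}R_{i}\bigr)\times U$, an open subset of an affine space with complement of codimension $\ge 2$; so by Corollary~\ref{AFtor}, Proposition~\ref{affAf} and iterated use of the fibration method, $Y$ verifies the purity of strong approximation off $S$.

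The inductive step then consists in producing, on a codimension-$\ge 2$ open subset $G^{\circ}\subseteq G$ containing every codimension-one point, a smooth morphism $f\colon G^{\circ}\to Z$ with geometrically integral fibres and with $Z$ verifying purity — $Z$ being an open of an affine space tamed by standard toric subvarieties as above — such that the $K$-fibres of $f$ over a suitable dense open $W\subseteq Z$ are, up to affine-space factors, simply connected semisimple groups of strictly smaller rank (concretely, after peeling off a maximal $K$-parabolic $P=M\ltimes U_{P}$ one uses that $M\simeq[M,M]\times S$ as a $K$-variety, with $S$ a quasi-trivial torus and $[M,M]$ simply connected of smaller rank, because $G$ is simply connected). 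Proposition~\ref{purmethfib} then applies: hypothesis~(iii) holds because the relevant map $\Omega(K_{v})\to T(K_{v})$ is a projection of a product, and hypotheses~(i) and~(ii) follow from Proposition~\ref{affAf}, Corollary~\ref{AFtor} and the induction hypothesis; this gives the purity of $G^{\circ}$. One deduces the purity of $G$ from that of $G^{\circ}$: for a smooth geometrically integral variety the adelic points of a dense open subset are dense among all its adelic points, so strong approximation for an open $V\subseteq G$ with complement of codimension $\ge 2$ follows from strong approximation for the dense open $V\cap G^{\circ}$, which holds by purity of $G^{\circ}$.

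I expect the main obstacle to be the construction of the fibration uniformly over all quasi-split types: in the non-split and exceptional cases the big cell may have complement of codimension $>1$ and the natural matrix-coefficient maps degenerate over the zero-norm loci, so one must work harder to arrange that the base verifies \emph{purity} rather than merely strong approximation (which is precisely what the standard toric subvariety buys) while keeping all geometric fibres integral; everything else is bookkeeping with Propositions~\ref{purmethfib}, \ref{affAf} and Corollary~\ref{AFtor}.
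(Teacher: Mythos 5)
You have correctly identified the overall strategy — quasi-splitness of $G$ over $K$ (Tsen), the open Bruhat cell $V_0\simeq B_u\times T\times B'_u$, the quasi-triviality of $T$, and the replacement of $T$ by the standard toric subvariety of $\Res_{A/K}(\mathbf{G}_a)$ so that the base of the fibration satisfies \emph{purity} rather than merely strong approximation. This is indeed how the paper proceeds. But your argument has a genuine gap exactly where you flag ``the main obstacle'': the variety $Y=U^-\times\bigl(\prod_i R_i\bigr)\times U$ in which you embed $\Omega$ is \emph{not} an open subvariety of $G$, and purity for it says nothing about an open $V\subseteq G$ with complement of codimension $\geq 2$ — such a $V$ must contain the generic points of the divisorial components of $G\setminus\Omega$, which live nowhere in your $Y$. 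The whole difficulty is to extend the projection $\phi_0\colon V_0\to T$ across these divisors to a morphism defined on (a dense open of) $G$ itself; you postulate such a morphism in your ``inductive step'' but never construct it. The paper does this via Lemma \ref{lemC16} — $f\mapsto\divcart(f)$ identifies $\overline{K}[V_0]^\times/\overline{K}^\times$ with $\Div_{G_{\overline{K}}\backslash V_{0,\overline{K}}}(G_{\overline{K}})$ because $\overline{K}[G]^\times=\overline{K}^\times$ and $\Pic(G_{\overline{K}})=0$ for $G$ simply connexe — combined with \cite[Propositions 2.2 and 2.3]{MR3778194}, which yield an extension $\phi_2\colon G\to\Res_{A/K}(\mathbf{G}_a)$ of $\phi_0$ and an open $Y\subseteq G$ on which $\phi_2$ restricts to a smooth morphism $\phi_1\colon Y\to R$ with geometrically integral fibres and $Y\cap\phi_2^{-1}(T)=V_0$ (Proposition \ref{extphi}). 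Without this step there is no fibration to which Proposition \ref{purmethfib} can be applied. (Note also that the complement of the big cell is always purely of codimension $1$, one divisor per Galois orbit of simple roots, so the worry that it ``may have codimension $>1$'' is not where the difficulty lies.)

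Second, the induction on the rank via maximal parabolics and Levi subgroups is both unnecessary and shakier than you suggest. Once $\phi_1\colon Y\to R$ is in place, its fibres over the $K$-points of $W=T$ are $\phi_0^{-1}(t)\simeq B_u\times B'_u$, i.e.\ affine spaces, so Proposition \ref{affAf} disposes of hypothesis (ii) of Proposition \ref{purmethfib} in one stroke; no recursion on smaller semisimple groups is needed, and no reduction to the absolutely almost simple case either. Your dévissage moreover relies on the assertion that $M\simeq[M,M]\times S$ as a $K$-variety for a Levi $M$ of a maximal parabolic, which is a priori only an isogeny statement and would itself require proof. Your final reduction from $G$ to a dense open (density of adelic points of a dense open subvariety) is correct and matches the corresponding step of the paper.
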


En vue de démontrer le théorème \ref{thpurgrp}, nous avons besoin de quelques résultats liminaires sur les groupes semi-simples simplement connexes quasi-déployés sur un corps de caractéristique nulle. Considérons $k$ un corps de caractéristique nulle et $G$ un $k$-groupe semi-simple simplement connexe quasi-déployé.

Soit $B$ un sous-groupe de Borel de $G$ défini sur $k$. D'après \cite[Theorem 16.33.(c)]{MR3729270}, on dispose d'une suite exacte courte
\begin{center}
\begin{tikzcd}
1\arrow[r]& B_u\arrow[r]& B\arrow[r, "p"]&T\arrow[r]& 1
\end{tikzcd}
\end{center}
où $B_u$ est le radical unipotent de $B$ et $T$ est un tore. Cette suite est par ailleurs scindée par \cite[Exposé XVII, Théorème 5.1.1.(i)(b)]{zbMATH03333085}: ceci permet d'identifier $T$ à un sous-groupe de $B$, si bien que $p$ est $T$-équivariante. Par \cite[Theorem 21.84]{MR3729270}, il existe un unique sous-groupe de Borel $B'$ de $G$ tel que $B\cap B'= T$, qui n'est autre que le sous-groupe de Borel opposé à $B$. En notant $B'_u$ le radical unipotent de~$B'$, l'application 
\begin{equation}\label{defphi0}
\begin{tikzcd}[row sep=0.4em, column sep=small]
B\times_k B'_u\arrow[r]& G\\
(b,u)\arrow[r, mapsto] & b.u
\end{tikzcd}
\end{equation} 
est une immersion ouverte d'image notée $V_0$ par \cite[Theorem 21.84]{MR3729270}, décrite comme la cellule ouverte de Bruhat. On peut alors définir un morphisme $T$-équivariant $\phi_0:V_0\rightarrow T$ faisant commuter le diagramme
\begin{equation}\label{defphi}
\begin{tikzcd}
V_0\arrow[r,"\phi_0"]&T \\
B\times_kB'_u\arrow[u, "\wr"]\arrow[r, "p_B"]& B\arrow[u, swap, "p"]
\end{tikzcd}
\end{equation}
où $p_B$ est la projection suivant $B$ et $\overline{k}\left[T\right]^{\times}/\overline{k}^{\times}\xrightarrow{\sim}\overline{k}\left[V_0\right]^{\times}/\overline{k}^{\times}$ est un isomorphisme de $\gal(\overline{k}/k)$-modules.\\

Rappelons qu'un tore $T$ est dit \textit{quasi-trivial} s'il est isomorphe à $\Res_{A/k}(\mathbf{G}_m)$ pour $A$ une $k$-algèbre étale. Lorsque $Z$ est un fermé d'une variété $X$, on note $\Div_Z(X)$ le groupe des diviseurs de Cartier de $X$ à support dans $Z$. On a alors le lemme suivant:

\begin{lem}\label{lemC16}
Le morphisme $\overline{k}\left[V_0\right]^{\times}/\overline{k}^{\times}\rightarrow\Div_{G_{\overline{k}}\backslash V_{0,\overline{k}}}(G_{\overline{k}})$ défini par $f\mapsto \divcart(f)$ est un isomorphisme de $\gal(\overline{k}/k)$-modules. En particulier, $T$ est quasi-trivial.
\end{lem}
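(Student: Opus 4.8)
Le plan est de ramener le calcul de $\overline{k}[V_0]^{\times}/\overline{k}^{\times}$ à celui du groupe de Picard de $G_{\overline{k}}$ via la suite exacte de localisation, après avoir décrit le fermé $Z=G_{\overline{k}}\setminus V_{0,\overline{k}}$. Première étape : la structure de $Z$. Comme variété, $V_{0,\overline{k}}\simeq B_u\times_k T\times_k B'_u$ est, via~(\ref{defphi0}), un translaté à droite de la grosse cellule de la décomposition de Bruhat de $G_{\overline{k}}$ relative à $B$ ; le fermé $Z$ est donc un translaté du complémentaire de la grosse cellule, lequel est la réunion des adhérences des cellules de Bruhat de codimension $\geq 1$. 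Comme tout élément du groupe de Weyl autre que l'élément le plus long est majoré dans l'ordre de Bruhat par un élément dont la cellule est de codimension $1$, ce complémentaire coïncide avec la réunion des adhérences des cellules de codimension $1$, qui sont classiquement en bijection avec les racines simples. On en tirerait que $Z$ est de codimension pure $1$, de composantes irréductibles une famille $(D_{\alpha})_{\alpha}$ indexée par les racines simples.

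Deuxième étape : la suite exacte de localisation. La variété $G_{\overline{k}}$ étant lisse et intègre, donc localement factorielle, les diviseurs de Weil et de Cartier y coïncident et l'on dispose de la suite exacte
$$1\longrightarrow\overline{k}[G_{\overline{k}}]^{\times}\longrightarrow\overline{k}[V_{0,\overline{k}}]^{\times}\xrightarrow{\ \divcart\ }\Div_{Z}(G_{\overline{k}})\longrightarrow\Pic(G_{\overline{k}})\longrightarrow\Pic(V_{0,\overline{k}})\longrightarrow 0,$$
où $\Div_{Z}(G_{\overline{k}})=\bigoplus_{\alpha}\mathbf{Z}\,D_{\alpha}$ et où $\divcart$ envoie une fonction inversible sur $V_{0,\overline{k}}$ sur son diviseur, vu comme fonction rationnelle sur $G_{\overline{k}}$, nécessairement à support dans $Z$. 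On invoquerait alors deux énoncés bien connus pour un groupe semi-simple simplement connexe sur un corps algébriquement clos : son groupe de caractères est trivial, d'où $\overline{k}[G_{\overline{k}}]^{\times}=\overline{k}^{\times}$ ; et $\Pic(G_{\overline{k}})=0$. La suite exacte montre alors que $\divcart$ induit un isomorphisme $\overline{k}[V_{0,\overline{k}}]^{\times}/\overline{k}^{\times}\xrightarrow{\sim}\Div_{Z}(G_{\overline{k}})$, et la fonctorialité de toute la construction ($B$, $B'$, $T$ et $V_0$ étant définis sur $k$) fournit la $\gal(\overline{k}/k)$-équivariance. C'est la première assertion.

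Troisième étape : la quasi-trivialité de $T$. Le groupe $G$ étant quasi-déployé, $B$ est défini sur $k$, donc $\gal(\overline{k}/k)$ agit sur $G_{\overline{k}}$ en préservant $B_{\overline{k}}$ ; il permute ainsi les cellules de Bruhat via l'action $*$ sur les racines simples, et partant les composantes $D_{\alpha}$ de $Z$. Le $\gal(\overline{k}/k)$-module $\Div_{Z}(G_{\overline{k}})=\bigoplus_{\alpha}\mathbf{Z}\,D_{\alpha}$ est donc un module de permutation. En composant l'isomorphisme de $\gal(\overline{k}/k)$-modules $X^{*}(T_{\overline{k}})=\overline{k}[T_{\overline{k}}]^{\times}/\overline{k}^{\times}\xrightarrow{\sim}\overline{k}[V_{0,\overline{k}}]^{\times}/\overline{k}^{\times}$ de~(\ref{defphi}) avec celui obtenu ci-dessus, on conclurait que le réseau des caractères de $T$ est un module de permutation pour $\gal(\overline{k}/k)$, ce qui équivaut précisément à la quasi-trivialité de $T$ (à chaque orbite galoisienne de racines simples correspondant un facteur $\Res_{k_i/k}(\mathbf{G}_m)$, où $k_i/k$ est l'extension finie séparable associée).

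L'obstacle principal n'est ni l'emploi de la suite exacte de localisation, ni les deux énoncés standards ($\Pic(G_{\overline{k}})=0$ et trivialité du groupe de caractères de $G$), mais bien la description combinatoire de $Z$ : il faut s'assurer que ses composantes irréductibles sont exactement les $D_{\alpha}$ — autrement dit que les cellules de Bruhat de codimension $\geq 2$ n'introduisent aucune composante supplémentaire — et vérifier que l'action de $\gal(\overline{k}/k)$ sur celles-ci se lit comme l'action de permutation sur le diagramme de Dynkin, de sorte que $\Div_{Z}(G_{\overline{k}})$ soit effectivement un module de permutation.
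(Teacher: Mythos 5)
Votre démonstration est correcte et suit essentiellement la même voie que celle de l'article, qui renvoie à \cite[Lemma 3.1]{MR4030258} et se contente d'invoquer $\overline{k}[G]^{\times}=\overline{k}^{\times}$ et $\Pic(G_{\overline{k}})=0$ pour conclure via la suite de localisation, la quasi-trivialité de $T$ résultant de l'isomorphisme $\overline{k}[T]^{\times}/\overline{k}^{\times}\simeq\overline{k}[V_0]^{\times}/\overline{k}^{\times}$. Vous explicitez simplement les détails laissés implicites (description du complémentaire de la grosse cellule, structure de module de permutation), lesquels sont exacts.
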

\begin{proof}
On reprend l'argument de \cite[Lemma 3.1]{MR4030258}. Puisque $G$ est semi-simple, le lemme de Rosenlicht assure que $\overline{k}\left[G\right]^{\times}=\overline{k}^{\times}$. De plus, $G_{\overline{k}}$ étant semi-simple et simplement connexe, on a $\Pic(G_{\overline{k}})=0$ (voir \cite[\S4.3, Theorem 1]{MR1634406}). On en déduit aussitôt la première assertion. La deuxième s'en déduit du fait que $\overline{k}\left[T\right]^{\times}/\overline{k}^{\times}\xrightarrow{\sim}\overline{k}\left[V_0\right]^{\times}/\overline{k}^{\times}$ est un isomorphisme.
\end{proof}

Il suit que $T$ est quasi-trivial. En combinant le lemme \ref{lemC16} avec \cite[Proposition~2.3]{MR3778194}, il existe un isomorphisme $T\simeq\Res_{A/k}(\mathbf{G}_m)$, où $A$ est étale sur $k$, via l'identification duquel le morphisme $\phi_0:V_0\rightarrow T$ s'étend en un morphisme $\phi_2:G\rightarrow\Res_{A/k}(\mathbf{G}_a)$. Notons $R$ la variété torique standard de $\Res_{A/k}(\mathbf{G}_a)$. On a alors:

\begin{prop}\label{extphi}
Il existe un ouvert $Y$ de $G$ stable sous l'action de $T$ et tel que $G\xrightarrow{\phi_2}\Res_{A/k}(\mathbf{G}_a)$ se restreigne à un morphisme $Y\xrightarrow{\phi_1} R$, lisse, à fibres géométriquement intègres et tel que $Y\cap\phi_2^{-1}(T)=V_0$.
\end{prop}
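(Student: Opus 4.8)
The goal is to manufacture, out of the morphism $\phi_2:G\to\Res_{A/k}(\mathbf{G}_a)$ constructed above, an open subset $Y\subset G$ that is $T$-stable, whose image lands in the standard toric subvariety $R$, on which $\phi_2$ restricts to a smooth morphism with geometrically integral fibres, and which recovers $V_0$ over the open torus $T=\Res_{A/k}(\mathbf{G}_m)\subset R$. The natural candidate is of course $Y:=\phi_2^{-1}(R)$, so the content is entirely in checking the three properties. The plan is to set $Y_1:=\phi_2^{-1}(R)$, shrink if necessary, and verify.

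First I would record that $Y_1$ is automatically $T$-stable: the map $\phi_2$ extends the $T$-equivariant map $\phi_0$, and since $\phi_0$ is $T$-equivariant for the $T$-action on $T$ by translation, $\phi_2$ is $T$-equivariant for the corresponding $T$-action on $\Res_{A/k}(\mathbf{G}_a)$ (equivariance is a closed condition that holds on the dense open $V_0$, hence everywhere). As $R$ is by definition stable under that $T$-action, $Y_1=\phi_2^{-1}(R)$ is $T$-stable, and the relation $Y_1\cap\phi_2^{-1}(T)=\phi_0^{-1}(T)=V_0$ follows since $\phi_2$ agrees with $\phi_0$ on $V_0$ and $\phi_0$ is $T$-valued. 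That $\phi_2$ restricted to $Y_1$ is a morphism to $R$ is then tautological. So the only real issue is smoothness with geometrically integral fibres.

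For smoothness and integrality of fibres, I would argue by $T$-equivariance combined with generic smoothness. Over the open torus $T\subset R$ the restriction of $\phi_2$ is $\phi_0:V_0\to T$, which is the composite of the isomorphism $V_0\simeq B\times_k B'_u$ with $p_B$ followed by $p:B\to T$; since $p$ is a split surjection with unipotent kernel $B_u$ (so smooth with integral fibres $\cong B_u$) and $B'_u$ is an affine space, $\phi_0$ is smooth with geometrically integral fibres. Now $T=\Res_{A/k}(\mathbf{G}_m)$ acts transitively by translation on the open torus $T\subset R$, and any fibre of $\phi_0$ over a point $t$ is, by $T$-equivariance, a $T$-translate of the fibre over $1$ — so all fibres over the open dense torus are isomorphic and in particular geometrically integral of the expected dimension $\dim G-\dim T$. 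For the remaining points of $R$: the locus $R_{\mathrm{sm}}\subset R$ over which $\phi_2$ is smooth is open (smoothness is an open condition on the source, and one checks it descends to an open on the target because $\phi_2$ restricted to the open preimage is flat — flatness comes from the fibre-dimension being constant, or one simply takes $Y$ to be the smooth locus of $\phi_2|_{\phi_2^{-1}(R)}$ directly); it contains the open torus; and it is $T$-stable. Since $R$ is the minimal $T$-stable open of $\Res_{A/k}(\mathbf{G}_a)$ containing $T$ with codimension-$\geq 2$ complement, and $R_{\mathrm{sm}}$ is a $T$-stable open containing $T$, either $R_{\mathrm{sm}}=R$ or $R\setminus R_{\mathrm{sm}}$ has codimension $1$ — the latter being excluded by minimality of $R$ only after checking that $R_{\mathrm{sm}}$ still has codimension-$\geq 2$ complement in $\Res_{A/k}(\mathbf{G}_a)$. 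I would therefore instead argue directly: replace $Y$ by the smooth locus of $\phi_2|_{\phi_2^{-1}(R)}$, note it is $T$-stable and contains $V_0$; its image $\phi_2(Y)$ is a $T$-stable open of $R$ containing $T$, hence equals $R$ minus a $T$-stable closed set, and one shows using $T$-equivariance and generic flatness that over each such remaining point the fibre is still geometrically integral (being a flat degeneration whose general fibre in the same $T$-orbit is integral, or by an explicit local computation on $\Res_{A/k}(\mathbf{G}_a)$). Finally, irreducibility of the total space is inherited from that of $G$.

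The main obstacle I anticipate is the last point: controlling the fibres of $\phi_2$ over the boundary $R\setminus T$, where we no longer have the clean description via Bruhat cells and must instead combine the $T$-equivariance with a careful local analysis of $\phi_2$ near those fibres — exactly the kind of computation that \cite[Proposition 2.3]{MR3778194} and the structure of $\Res_{A/k}(\mathbf{G}_a)$ are meant to facilitate. The cleanest route is probably to observe that $R$ is covered by $T$-translates of finitely many explicit affine opens on which $\phi_2$ becomes a projection, reducing everything to the fibre over $1\in T$.
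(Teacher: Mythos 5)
Your overall strategy---take $Y_1=\phi_2^{-1}(R)$, use $T$-équivariance pour propager les bonnes propriétés depuis le tore ouvert, puis rétrécir---diffère de la preuve du texte, qui consiste en une seule ligne: appliquer \cite[Proposition 2.2]{MR3778194} avec $Z=\Res_{A/k}(\mathbf{G}_a)$, $X=G$, $U=V_0$ et $f=\phi_2$. Ce résultat cité est précisément l'énoncé général, pour une variété torique, qui produit l'ouvert $Y$ avec lissité, fibres géométriquement intègres et la condition $Y\cap\phi_2^{-1}(T)=V_0$; le travail que vous essayez de refaire à la main y est déjà fait. Votre tentative de le redémontrer directement comporte deux lacunes réelles.

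La première, que vous identifiez vous-même comme l'obstacle principal, est le contrôle des fibres au-dessus du bord $R\backslash T$. L'argument proposé («une dégénérescence plate dont la fibre générale dans la même $T$-orbite est intègre») ne fonctionne pas: une limite plate de variétés intègres n'a aucune raison d'être intègre (elle peut devenir réductible ou non réduite), et l'équivariance sous $T$ ne dit rien sur les fibres au-dessus des orbites de bord, puisque celles-ci ne sont pas dans l'orbite ouverte. C'est exactement là que la structure torique de $R$ (le complémentaire de $R$ dans $\Res_{A/k}(\mathbf{G}_a)$ étant de codimension $\geq 2$, seules les strates de codimension $1$ interviennent) et l'analyse des diviseurs de $G\backslash V_0$ via le lemme \ref{lemC16} sont nécessaires; votre plan renvoie cette analyse à «un calcul local explicite» sans l'effectuer. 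La seconde lacune est l'affirmation que $Y_1\cap\phi_2^{-1}(T)=V_0$ serait tautologique: comme $T\subset R$, on a $Y_1\cap\phi_2^{-1}(T)=\phi_2^{-1}(T)$, et votre justification ne donne que l'inclusion $V_0\subset\phi_2^{-1}(T)$. L'inclusion réciproque exige de savoir que tout point de $G\backslash V_0$ s'envoie dans $\Res_{A/k}(\mathbf{G}_a)\backslash T$, ce qui dépend de la construction précise de $\phi_2$ (les diviseurs de $G\backslash V_0$ sont les lieux d'annulation des coordonnées de $\phi_2$) et fait partie de la conclusion de \cite[Proposition 2.2]{MR3778194}, non d'une évidence.
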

\begin{proof}
On applique \cite[Proposition 2.2]{MR3778194}, où $Z=\Res_{A/k}(\mathbf{G}_a)$, $X=G$, $U=V_0$ et $f=\phi_2$.
\end{proof}

Prenons maintenant $k=K$: comme $K$ est un corps $C_1$ (théorème de Tsen), tout groupe connexe est quasi-déployé (voir \cite[Corollary 25.53]{MR3729270}). On garde donc dans la suite les notations de la discussion précédente.

\begin{proof}[Démonstration du théorème \ref{thpurgrp}]
D'après la proposition \ref{extphi}, le morphisme $\phi_0:V_0\rightarrow T$ s'étend en un morphisme $\phi_1:Y\rightarrow R$. Il suffit de montrer que tout ouvert de $Y$ contenant~$Y^{(1)}$ vérifie l'approximation forte hors de $S$.

Appliquons à $\phi_1$ la méthode de fibration pour la pureté de la proposition \ref{purmethfib}, en posant $W=T$.

La condition (i) est vérifiée en vertu du corollaire \ref{AFtor}.

De plus, si $x\in W(K)$ on déduit de (\ref{defphi0}) et (\ref{defphi}) que $\phi_1^{-1}(x)$ est un espace affine sur $K$: il suit alors de la proposition \ref{affAf} que tout ouvert de $\phi_1^{-1}(x)$ contenant ${\phi_1^{-1}(x)}^{(1)}$ vérifie l'approximation forte hors de $S$, ce qui assure que la condition (ii) est vérifié.

Enfin, la condition (iii) suit immédiatement du fait que $f^{-1}(x)$ est un ouvert d'un espace affine sur $\kappa(x)$ dès que $x\in W$.
\end{proof}

\subsection{Cas général}

Rappelons l'énoncé du théorème \ref{thpureh} qu'il s'agit de démontrer dans cette sous-section:

\thpureh*

\begin{proof}[Démonstration du théorème \ref{thpureh}]
Soit $G$ un $K$-groupe semi-simple, $X$ un espace homogène sous $G$ à gauche. Considérons $U$ un ouvert de $X$ contenant $X^{(1)}$ et $\alpha\in U(\mathbf{A}^S_K)$. Quitte à remplacer~$G$ par son revêtement universel, on peut d'emblée supposer que $G$ est semi-simple et simplement connexe.

Puisque $K$ est $C_1$, on a $X(K)\neq\emptyset$ (voir \cite[III \S 2.3, Théorème 1' et III \S 2.4, Corollaire~$1$]{MR1324577}) donc $X\simeq G/H$ où $H$ est un sous-groupe de $G$. On considère alors le~$H$-torseur à droite $G\rightarrow X$ obtenu par choix d'un point rationnel de $X$. Il s'agit d'appliquer la méthode de descente \ref{purdesc} à ce torseur.

D'une part, pour $\left[\xi\right]\in \coh^1(K,H)$, le tordu $G^{\xi}$ est un $G$-torseur à gauche: il admet un point rationnel d'après \cite[\textit{ibidem}]{MR1324577} et il est trivial puisque c'est un $G$-torseur. Le théorème \ref{thpurgrp} assure donc que $G^{\xi}$ vérifie la pureté de l'approximation forte hors de $S$. D'autre part, $G^\xi(K_v)\neq\emptyset$ pour $v\in S$, puisque $G^\xi$ admet un point rationnel. Il suit alors de la proposition \ref{purdesc} que $X$ vérifie également la pureté de l'approximation forte hors de $S$, ce qui conclut.
\end{proof}

\subsection{Application à la pureté de l'approximation forte pour certaines variétés}

Nous utilisons ici les résultats précédents pour obtenir des résultats de pureté pour certaines variétés.

\subsubsection*{Les quadriques affines lisses}

Le corollaire suivant établit un résultat de pureté de l'approxi\-mation forte hors d'une place pour les quadriques affines lisses et généralise \cite[Corollaire 4.1]{MR3782219}:

\quadaffcor
\begin{proof}
Une telle quadrique affine est un espace homogène sous $SO(q)$, qui est un groupe semi-simple. On conclut donc par le théorème \ref{thpureh} qu'elle vérifie la pureté de l'approximation forte hors de $S$.
\end{proof}

Remarquons que le cas $n=3$ du corollaire précédent n'est pas recouvert par le théorème~\ref{thpuricl}: en effet, la condition sur les degrés du théorème \ref{thpuricl} permet uniquement de démontrer le corollaire précédent dans le cas $n\geq4$.

\subsubsection*{Complémentaire d'une quadrique projective lisse}

On déduit du cas des quadriques affines lisses la pureté de l'approximation forte hors d'une place pour le complémentaire d'une quadrique projective lisse, généralisant de la sorte \cite[Corollaire 4.5]{MR3782219}:

\begin{cor}\label{logquad}
Soit $S$ un ensemble fini non vide de places de $K$. Soit $n\geq3$ et $q(x_0,\dots,x_n)$ une forme quadratique non dégénérée sur $K$. Le complémentaire de l'hypersurface $q=0$ dans $\mathbf{P}^n_K$ vérifie la pureté de l'approximation forte hors de $S$.
\end{cor}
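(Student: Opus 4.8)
The plan is to reduce the complement of a projective quadric to the affine quadric case already settled in Corollary \ref{quadaff}, by exhibiting the complement $\mathbf{P}^n_K \setminus \{q=0\}$ as an affine quadric of dimension $n$ in $\mathbf{A}^{n+1}_K$. Concretely, introduce one extra variable $x_{n+1}$ and consider the affine quadric $Q$ in $\mathbf{A}^{n+1}_K$ given by $q(x_0,\dots,x_n) = 1$ — or more symmetrically $q(x_0,\dots,x_n) - x_{n+1}^2 = 1$ if one wants a nondegenerate form in all $n+2$ variables; either works, and the first is simplest. First I would check that the projection $(x_0,\dots,x_n)\mapsto [x_0:\dots:x_n]$ realizes $Q$ as the total space of a $\mathbf{G}_m$-torsor over $U:=\mathbf{P}^n_K\setminus\{q=0\}$: indeed a point of $U$ is a line on which $q$ does not vanish, hence a line meeting the level set $\{q=1\}$ in a $\mu_2$-orbit — so in fact $Q\to U$ is a torsor under $\mathbf{G}_m$ only up to the sign ambiguity, and it is cleaner to say $Q\to U$ is a smooth surjective morphism whose fibres are torsors under the $K$-group $\mu_2$ or, better, to rescale and note that $\{q=\lambda\}$ for $\lambda$ running over $\mathbf{G}_m$ sweeps out exactly $Q':=\{q\neq 0\}\subset\mathbf{A}^{n+1}_K$ mapping to $U$ as a genuine $\mathbf{G}_m$-torsor. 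I will use this last description.

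So set $Q' = \{(x_0,\dots,x_n) : q(x_0,\dots,x_n)\neq 0\}\subset\mathbf{A}^{n+1}_K$, with the evident $\mathbf{G}_m$-action by scaling, and let $f:Q'\to U$ be the quotient morphism, which is a $\mathbf{G}_m$-torsor (it is Zariski-locally trivial since $U$ is covered by the loci $\{x_i\neq 0\}$ on which a section is given by $x_i=1$). The key steps are then: (1) $Q'$ satisfies purity of strong approximation off $S$; (2) descent along the torsor $f$ via Proposition \ref{purdesc} (with $G=\mathbf{G}_m$) transfers this to $U$. For step (1), I would fibre $Q'$ over $\mathbf{G}_m$ by $(x_0,\dots,x_n)\mapsto q(x_0,\dots,x_n)$: the fibre over $\lambda\in\mathbf{G}_m(K)=K^\times$ is the affine quadric $\{q=\lambda\}$, which satisfies purity off $S$ by Corollary \ref{quadaff} (here $n\geq 3$ gives at least $3$ variables, as required there); the base $\mathbf{G}_m$ satisfies purity off $S$ since it is a semisimple ... no — $\mathbf{G}_m$ is not semisimple, so instead I note $\mathbf{G}_m = \mathbf{A}^1_K\setminus\{0\}$ is the complement of a codimension-$1$ closed subset of $\mathbf{A}^1_K$, which is not directly a purity input; the honest route is that $\mathbf{G}_m$ satisfies strong approximation off any nonempty $S$ over $K$ (it is a torus, handled in \cite{MR3782219}), and that the only codimension-$\geq 2$ closed subsets of the curve $\mathbf{G}_m$ are empty, so purity off $S$ for $\mathbf{G}_m$ reduces to strong approximation off $S$ for $\mathbf{G}_m$ itself, which holds. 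Then apply Proposition \ref{purmethfib} with $W=\mathbf{G}_m$, checking (iii) since the fibration $q:Q'(K_v)\to\mathbf{G}_m(K_v)$ is surjective for $v\in S$ (any $\lambda\in K_v^\times$ is a value of the nondegenerate form $q$ over the field $K_v$, as $q$ represents every nonzero element over a field with at least $4$ square classes, or simply because $q$ is isotropic over $K_v$ — $K_v$ being $C_2$ hence every form in $\geq 3$ variables with $\geq 5$ is isotropic; for $n\geq 3$ and $n+1\geq 4$ variables this is fine). This gives step (1).

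For step (2), apply Proposition \ref{purdesc} to the $\mathbf{G}_m$-torsor $f:Q'\to U$: since $\coh^1(K,\mathbf{G}_m)=0$ by Hilbert 90, the only twist is the trivial one $Q'$ itself, which satisfies purity off $S$ by step (1) and has $Q'(K_v)\neq\emptyset$ for all $v$ (pick any $x$ with $q(x)\neq 0$ over the infinite field $K_v$); hence $U=\mathbf{P}^n_K\setminus\{q=0\}$ satisfies purity of strong approximation off $S$. The main obstacle I anticipate is not conceptual but bookkeeping: making sure the torsor structure $Q'\to U$ is correctly identified (it is the complement-of-the-zero-section bundle of $\mathscr{O}(1)$ restricted to $U$, so genuinely a $\mathbf{G}_m$-torsor) and that the surjectivity hypothesis (iii) in Proposition \ref{purmethfib} — equivalently that $q:K_v^{n+1}\to K_v$ is surjective onto $K_v^\times$ — holds for the places in $S$; this last point uses only that $q$ is a nondegenerate quadratic form in at least $3$ variables over the field $K_v\cong\mathbf{C}((t))$, which is $C_2$, so $q$ represents every element of $K_v^\times$ once $n+1\geq 3$, i.e. for all $n\geq 2$ and a fortiori for $n\geq 3$.
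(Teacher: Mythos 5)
There is a genuine gap, and it is fatal to step (1). Your fibration argument for $Q'=\{q\neq 0\}\subset\mathbf{A}^{n+1}_K$ rests on the claim that $\mathbf{G}_m$ satisfies strong approximation off a nonempty finite $S$ over $K$, ``it is a torus, handled in \cite{MR3782219}''. That reference proves the opposite: over $\mathbf{C}(\mathbb{P}^1)$ the multiplicative group fails strong approximation off \emph{every} finite set of places (the introduction of the present paper recalls this explicitly). Concretely, a rational function integral outside $S\cup\{v_1,\dots,v_r\}$ with prescribed valuations at the $v_i$ is determined up to finitely many parameters, so one cannot match the higher-order terms of arbitrary local units at the $v_i$. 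Hence the base of your fibration $q:Q'\to\mathbf{G}_m$ does not satisfy hypothesis (i) of Proposition \ref{purmethfib}. Worse, the conclusion of step (1) is itself false: since $q:Q'\to\mathbf{G}_m$ is smooth surjective and carries integral points to integral points (and every adelic point of $\mathbf{G}_m$ lifts to one of $Q'$ by Hensel over the algebraically closed residue field), strong approximation for $Q'$ off $S$ would force it for $\mathbf{G}_m$ off $S$, a contradiction. So $Q'$ cannot serve as the total space of your descent, and step (2) — which by Hilbert 90 has no nontrivial twists to fall back on — inherits the failure.

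The correct route is precisely the $\mu_2$-option you mention and then discard as less clean. The paper takes $Y=\{q=1\}\subset\mathbf{A}^{n+1}_K$ with the projection $Y\to X=\mathbf{P}^n_K\setminus\{q=0\}$, which is a $\mu_2$-torsor. Here $\coh^1(K,\mu_2)=K^\times/(K^\times)^2$ is nontrivial, but the twist by $[c]$ is the affine quadric $\{q=c\}$, which satisfies purity of strong approximation off $S$ by Corollary \ref{quadaff} (this is where the hypothesis $n\geq 3$, i.e.\ at least four variables, enters) and has local points everywhere since $K_v\cong\mathbf{C}((t))$ is $C_1$. Proposition \ref{purdesc} then transfers purity to $X$ directly, with no fibration over $\mathbf{G}_m$ needed. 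Your instinct that the $\mathbf{G}_m$-torsor is ``cleaner'' is what led you astray: over $\mathbf{C}(\Gamma)$, tori are exactly the groups for which strong approximation breaks down, whereas the finite group $\mu_2$ costs only a computation of $\coh^1$ and a check on each twist.
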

\begin{proof}
Notons $X$ le complémentaire de l'hypersurface $q=0$ dans $\mathbf{P}^n_K$ et soit $Y\subset\mathbf{A}^{n+1}_K$ la quadrique d'équation $q=1$. La projection $Y\xrightarrow{f} X$ fait de $Y$ un $\mathbf{\mu}_2$-torseur sur~$X$. Or, on a $\coh^1(K,\mathbf{\mu}_2)=K^{\times}/(K^{\times})^2$, donc le tordu de $f$ par $[c]\in \coh^1(K,\mathbf{\mu}_2)$ a son espace total défini par l'équation $q=c$. Il suit alors du corollaire \ref{quadaff} et de la proposition \ref{purdesc} que l'approximation forte vaut pour tout ouvert de $X$ contenant les points de codimension~$1$.
\end{proof}

\section{Intersections complètes affines lisses}

Cette section est dédiée à la démonstration du théorème \ref{thpuricl}, dont on rappelle l'énoncé:

\thpuricl*

\subsection{Un peu d'algèbre commutative}\label{algcomm}
Cette sous-section sert d'outil à la suivante, et peut être sautée en première lecture.

Soit $k$ un anneau commutatif. Une $k$-paire est un couple $(A,I)$ où $A$ est une $k$-algèbre et $I$ est un idéal de $A$. Un morphisme de $k$-paires $f:(A,I)\rightarrow(B,J)$ est un morphisme de $k$-algèbres $f:A\rightarrow B$ tel que $f(I)\subset J$.

Fixons dès à présent deux morphismes de $k$-paires $g:(A,I)\rightarrow (B',J')$ et $h:(B,J)\rightarrow(B',J')$. Notons $N$ le noyau de $h:B\rightarrow B'$ et supposons dorénavant que $N^2=0$. Dans la suite, on note $d:A\rightarrow\Omega^1_{A/k}$ le morphisme de différentiation canonique. On dit qu'un morphisme de $k$-paires $\phi:(A,I)\rightarrow(B,J)$ est un \textit{relèvement} de $g$ via $h$ s'il fait commuter le diagramme suivant:
\begin{center}
\begin{tikzcd}[column sep= 3em, row sep= 3 em]
      & (B,J)\arrow[d, "h"]\\
(A,I)\arrow[r, "g"] \arrow[ur, "\phi "]& (B',J')
\end{tikzcd}.
\end{center}
Notons que dans une telle configuration, le morphisme $\phi$ munit $N$ d'une structure de $A$-module et que cette structure est indépendante du choix du relèvement (car la différence de deux tels relèvements est à valeurs dans $N$, où l'idéal $N^2$ est nul). Si un relèvement de~$g$ via $h$ existe, on note $\Der_k(A,N;I\rightarrow J\cap N)$ l'ensemble des $k$-dérivations de~$A$ dans~$N$ envoyant~$I$ dans $J\cap N$. On dispose alors du fait suivant:
\begin{fait}\label{diffrelev}
S'il est non vide, l'ensemble des relèvements de $g$ via $h$ est un espace homogène principal sous $\Der_k(A,N;I\rightarrow J\cap N)$.
\end{fait}
\begin{proof}
Si $\psi$ est un autre relèvement de $g$ via $h$, l'application $\phi-\psi$ est une $k$-dérivation de $A$ dans~$N$ (voir \cite[\S25]{zbMATH00043569}) envoyant $I$ dans $J\cap N$. Réciproquement, si~$\delta$ est une $k$-dérivation de~$A$ dans $N$ telle que $\delta(I)\subset J\cap N$, l'application $\phi+\delta$ est un morphisme de $k$-paires $\phi+\delta:(A,I)\rightarrow (B,J)$ qui est également un relèvement de~$g$.
\end{proof}

Posons $\overline{N}$ l'image de $N$ par le quotient $B\rightarrow B/J$ et remarquons qu'elle est munie d'une structure de $A/I$-module si $h(J)=J'$. En faisant l'hypothèse que $h(J)=J'$, on va construire un isomorphisme naturel
\begin{equation}\label{eqn:isonat}
\Der_k(A,N;I\rightarrow J\cap N)\simeq \ker\left(\Hom_A(\Omega^1_{A/k},N)\rightarrow \Hom_{A/I}(I/I^2,\overline{N})\right)\tag{$\star$}
\end{equation}
où le morphisme $\tau:\Hom_A(\Omega^1_{A/k},N)\rightarrow \Hom_{A/I}(I/I^2,\overline{N})$ est défini comme suit: il envoie un morphisme de $A$-modules $\theta:\Omega^1_{A/k}\rightarrow N$ sur la composé de la ligne du bas dans le diagramme commutatif suivant
\begin{center}
\begin{tikzcd}
I \arrow[r,"d\vert_I"]\arrow[d]&\Omega^1_{A/k}\arrow[r, "\theta"]\arrow[d, "\pi", swap] & N\arrow[d, "q"]\\
I/I^2\arrow[r,"\alpha"] & \Omega^1_{A/k}/I\Omega^1_{A/k}\arrow[r,"\overline{\theta}"] & \overline{N}
\end{tikzcd}
\end{center}
où les flèches verticales sont les applications quotients, le morphisme $\alpha$ est défini par $\alpha([i])=[di]$ et le morphisme $\overline{\theta}$ est obtenu par passage au quotient de $q\circ\theta$ du fait que $\theta(I\Omega^1_{A/k})\subset IN\subset JN$ et donc $q\circ\theta(I\Omega^1_{A/k})\subset q(JN)=0$.

Pour construire l'isomorphisme \eqref{eqn:isonat}, commençons par remarquer qu'on a un isomorphisme naturel $$\Der_k(A,N;I\rightarrow J\cap N)\simeq \Hom_A(\Omega^1_{A/k},N;dI\rightarrow J\cap N)$$où $\Hom_A(\Omega^1_{A/k},N;dI\rightarrow J\cap N)$ désigne le sous-module de $\Hom_A(\Omega^1_{A/k},N)$ constitué des morphismes envoyant $dI$ dans $J\cap N$. Or, par construction de $\tau$, le noyau $\ker(\tau)$ correspond précisément aux morphismes $\theta:\Omega^1_{A/k}\rightarrow N$ tels que $\theta(dI)\subset J\cap N$, d'où l'isomorphisme naturel souhaité.
\begin{prop}\label{propdeform}
Avec les notations précédentes et en supposant que $h(J)=J'$, il existe un isomorphisme naturel $$\Der_k(A,N;I\rightarrow J\cap N)\simeq \ker\left(\Hom_A(\Omega^1_{A/k},N)\rightarrow \Hom_{A/I}(I/I^2,\overline{N})\right).$$
\end{prop}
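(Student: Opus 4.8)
This proposition is nothing other than the natural isomorphism \eqref{eqn:isonat}, whose construction has been carried out in the discussion immediately preceding the statement; so the plan is to record that construction, which proceeds in two steps.

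First, I would invoke the universal property of the module of Kähler differentials. Every $k$-derivation $\delta\colon A\to N$ factors uniquely as $\delta=\theta\circ d$ for a unique $A$-linear map $\theta\colon\Omega^1_{A/k}\to N$, and conversely; thus $\delta\mapsto\theta$ is a natural isomorphism $\Der_k(A,N)\xrightarrow{\sim}\Hom_A(\Omega^1_{A/k},N)$ (see \cite[\S25]{zbMATH00043569}). Since $\delta(i)=\theta(di)$ for every $i\in I$, this isomorphism identifies the $k$-derivations sending $I$ into $J\cap N$ with the submodule $\Hom_A(\Omega^1_{A/k},N;dI\to J\cap N)$ of those $\theta$ with $\theta(dI)\subseteq J\cap N$; this is the first displayed natural isomorphism of the discussion above.

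Second, I would identify $\Hom_A(\Omega^1_{A/k},N;dI\to J\cap N)$ with $\ker(\tau)$. Unwinding the definition of $\tau$, for $\theta\colon\Omega^1_{A/k}\to N$ the homomorphism $\tau(\theta)=\overline\theta\circ\alpha\colon I/I^2\to\overline N$ is, by commutativity of the defining diagram, the factorization through $I/I^2$ of the composite $q\circ\theta\circ d|_I\colon I\to\overline N$. The existence of this factorization is the one point that genuinely has to be checked: the map $\alpha$ is defined using $d(I^2)\subseteq I\,dI\subseteq I\Omega^1_{A/k}$ (Leibniz rule), while $q\circ\theta$ descends to $\Omega^1_{A/k}/I\Omega^1_{A/k}$ because $\theta(I\Omega^1_{A/k})\subseteq IN\subseteq JN\subseteq J\cap N=\ker q$. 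This is also precisely where the hypothesis $h(J)=J'$ enters: it is what endows $\overline N=N/(N\cap J)$ with an $A/I$-module structure, so that the target $\Hom_{A/I}(I/I^2,\overline N)$ and the map $\tau$ are even meaningful. Since $\ker q=J\cap N$, one then has $\tau(\theta)=0$ if and only if $q\circ\theta\circ d|_I=0$, that is, if and only if $\theta(dI)\subseteq J\cap N$. Hence $\ker(\tau)=\Hom_A(\Omega^1_{A/k},N;dI\to J\cap N)$, and composing the two identifications yields the asserted natural isomorphism, namely \eqref{eqn:isonat}. Naturality in all of the data is automatic, as every arrow involved — $d$, the canonical quotient maps, the structural morphisms of the $k$-pairs, and hence $\alpha$, $\overline\theta$ and $\tau$ — is produced functorially.

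The only real obstacle is therefore the well-definedness of $\tau$ discussed above; everything else is formal diagram-chasing, and since \eqref{eqn:isonat} has already been exhibited, the proof of the proposition amounts to citing that construction.
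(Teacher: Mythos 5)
Your proposal is correct and follows essentially the same route as the paper: the paper's own justification is precisely the discussion preceding the proposition, which first identifies $\Der_k(A,N;I\rightarrow J\cap N)$ with $\Hom_A(\Omega^1_{A/k},N;dI\rightarrow J\cap N)$ via the universal property of $\Omega^1_{A/k}$, and then observes that by construction of $\tau$ this submodule is exactly $\ker(\tau)$. Your additional checks (well-definedness of $\alpha$ and $\overline\theta$, and the role of $h(J)=J'$ in giving $\overline{N}$ its $A/I$-module structure) match the remarks the paper makes when defining $\tau$.
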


Pour l'existence des relèvements, on dispose du théorème suivant:

\begin{thm}\label{thmdeform}
Supposons que $A$ et $A/I$ sont formellement lisses sur $k$ (au sens de de \cite[Définition (17.1.1)]{zbMATH03245973}), que $h$ est surjective avec $h(J)=J'$. Alors il existe un relèvement de $g$ via $h$.
\end{thm}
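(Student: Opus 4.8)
La stratégie est de dérouler l'argument classique de théorie de la déformation, en s'appuyant sur la proposition \ref{propdeform} pour identifier les groupes en jeu. On commencerait par oublier les idéaux et produire un relèvement au niveau des $k$-algèbres: puisque $h\colon B\to B'$ est surjective de noyau $N$ vérifiant $N^2=0$ et que $A$ est formellement lisse sur $k$, la propriété de relèvement de \cite{zbMATH03245973} appliquée à l'extension à carré nul $B\twoheadrightarrow B'$ fournirait un morphisme de $k$-algèbres $\widetilde{g}\colon A\to B$ tel que $h\circ\widetilde{g}=g$. Il resterait alors à corriger $\widetilde{g}$ par une $k$-dérivation à valeurs dans $N$ pour qu'il respecte les idéaux, c'est-à-dire qu'il envoie $I$ dans $J$: rappelons en effet que l'ajout à $\widetilde{g}$ d'une $k$-dérivation $\delta\colon A\to N$ produit encore un relèvement de $g$ au niveau des $k$-algèbres (puisque $N^2=0$), et que deux tels relèvements diffèrent d'une telle dérivation.

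On mesurerait ensuite le défaut de $\widetilde{g}$ à être un morphisme de $k$-paires. Comme $g$ est un morphisme de $k$-paires et que $h(J)=J'$ avec $h$ surjective, on a $\widetilde{g}(I)\subset h^{-1}(J')=J+N$, de sorte que $i\mapsto\widetilde{g}(i)\bmod J$ définit une application $c\colon I\to (J+N)/J=\overline{N}$; puisque $(J+N)^2\subset J$, l'application $c$ s'annule sur $I^2$ et est $A/I$-linéaire, donc $c\in\Hom_{A/I}(I/I^2,\overline{N})$, et $\widetilde{g}$ est un morphisme de $k$-paires si et seulement si $c=0$. Remplacer $\widetilde{g}$ par $\widetilde{g}+\delta$ pour $\delta\in\Der_k(A,N)$ remplace $c$ par $c+\tau(\delta)$, où $\tau\colon\Der_k(A,N)\simeq\Hom_A(\Omega^1_{A/k},N)\to\Hom_{A/I}(I/I^2,\overline{N})$ est le morphisme apparaissant dans la construction de l'isomorphisme \eqref{eqn:isonat} de la proposition \ref{propdeform}, dont le noyau est $\Der_k(A,N;I\rightarrow J\cap N)$. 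Le théorème se ramènerait ainsi à la surjectivité de $\tau$: si $\tau$ est surjective, choisir $\delta$ tel que $\tau(\delta)=-c$ ferait de $\widetilde{g}+\delta$ le relèvement cherché, et le fait \ref{diffrelev} décrirait alors tous les autres.

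Le point principal, et le véritable obstacle, est donc la surjectivité de $\tau$, et c'est là qu'interviendraient les deux hypothèses de lissité formelle. Je factoriserais $\tau$ en
$$\Hom_A(\Omega^1_{A/k},N)\xrightarrow{\ q_*\ }\Hom_A(\Omega^1_{A/k},\overline{N})=\Hom_{A/I}(\Omega^1_{A/k}\otimes_A A/I,\overline{N})\xrightarrow{\ (\overline{d})^*\ }\Hom_{A/I}(I/I^2,\overline{N}),$$
où $q\colon N\to\overline{N}$ est la surjection canonique et $\overline{d}$ envoie $[i]$ sur $di\otimes 1$. La lissité formelle de $A$ sur $k$ entraîne que $\Omega^1_{A/k}$ est un $A$-module projectif, de sorte que $q_*$ est surjective; la lissité formelle de $A/I$ sur $k$ entraîne que la suite conormale $0\to I/I^2\xrightarrow{\overline{d}}\Omega^1_{A/k}\otimes_A A/I\to\Omega^1_{(A/I)/k}\to 0$ est exacte et scindée, de sorte que $(\overline{d})^*$ est surjective. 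Par composition, $\tau$ serait surjective et l'argument s'achèverait.

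Les seuls ingrédients véritablement délicats sont ces deux faits classiques sur la lissité formelle — la projectivité de $\Omega^1_{A/k}$ et le scindage de la suite conormale —, qu'il faudrait invoquer soigneusement; tout le reste n'est qu'une gymnastique avec l'idéal à carré nul $N$ (vérification de la $A/I$-linéarité de $c$, compatibilité du décalage $c\mapsto c+\tau(\delta)$ avec la construction de $\tau$, et le fait que la somme d'un morphisme d'anneaux et d'une dérivation à valeurs dans $N$ reste un morphisme d'anneaux lorsque $N^2=0$).
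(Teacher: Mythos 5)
Your proposal is correct and follows essentially the same route as the paper: lift $g$ at the level of $k$-algebras using the formal smoothness of $A$ and the square-zero kernel $N$, then correct by a $k$-derivation, the two key surjectivities (your $q_*$ and $(\overline{d})^*$) being exactly the paper's use of the projectivity of $\Omega^1_{A/k}$ and of the retraction of the conormal map $\alpha$ furnished by the formal smoothness of $A/I$. Your packaging of steps (c)--(d) as the surjectivity of the single map $\tau$ from the construction of \eqref{eqn:isonat} is only a cosmetic reorganization of the paper's construction of $\gamma=\beta\circ\rho\circ\pi$.
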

\begin{proof}
Considérons le diagramme commutatif de $k$-algèbres suivant
\begin{center}
\begin{tikzcd}
& B \arrow[r, twoheadrightarrow, "p"]\arrow[d, twoheadrightarrow, "h"] & B/J \arrow[d,"\overline{h}", twoheadrightarrow] \\
A\arrow [r, "g"] & B' \arrow[r, twoheadrightarrow, "p'"] & B'/J'
\end{tikzcd}
\end{center}
où $p$ et $p'$ sont les applications quotient et où $\overline{h}$ est obtenue par passage au quotient de~$p'\circ h$. Puisque par hypothèse le noyau $N$ de $h$ est de carré nul et puisque $A$ est formellement lisse sur $k$, il existe un morphisme de $k$-algèbre $\phi: A\rightarrow B$ s'insérant dans le diagramme commutatif précédent. Notons d'emblée que puisque $g(I)\subset J'$, on a $\phi(I)\subset J+N$. 

Pour construire un relèvement de $k$-paires de $g$ via $h$, il suffit d'après \cite[\S25, p.191]{zbMATH00043569} de trouver une dérivation $\delta\in \Der_k(A,N)$ telle que $(\phi+\delta)(I)\subset J$, auquel cas $\phi+\delta$ définit un relèvement de $k$-paires de $g$ via $h$. Or, le morphisme $p\vert_{N,*}:\Der_k(A,N)\rightarrow \Der_k(A,\overline{N})$ étant naturellement isomorphe au morphisme $p\vert_{N,*}:\Hom_A(\Omega^1_{A/k},N)\rightarrow \Hom_A(\Omega^1_{A/k},\overline{N})$, il est surjectif car $p\vert_N:N\rightarrow\overline{N}$ est surjective et $\Omega^1_{A/k}$ est un $A$-module projectif par \cite[Proposition~17.2.3.(i)]{zbMATH03245973} combiné à \cite[\href{https://stacks.math.columbia.edu/tag/05JQ}{Tag 05JQ}]{stacks-project}. Il suffit donc de trouver $\epsilon\in \Der_k(A,\overline{N})$ telle que $(p\circ\phi-\epsilon)(I)=0$ car dans ce cas tout antécédent $\delta$ de $\epsilon$ par $p\vert_{N,*}$ convient.

En posant $\psi=p\circ\phi$, on cherche donc $\epsilon\in \Der_k(A,\overline{N})$ telle que $\epsilon\vert_I=\psi\vert_I$. Par l'identification naturelle $\Der_k(A,\overline{N})\simeq \Hom_A(\Omega^1_{A/k},\overline{N})$, ceci revient à chercher un morphisme $\gamma:\Omega^1_{A/k}\rightarrow\overline{N}$ tel que $\gamma\circ d\vert_I=\psi\vert_I$. Considérons maintenant le diagramme commutatif suivant
\begin{center}
\begin{tikzcd}
\overline{N}&I \arrow[l,"\psi\vert_I"] \arrow[r, "d"]\arrow[d] & \Omega^1_{A/k}\arrow[d,"\pi"]\\
&I/I^2\arrow[r,"\alpha"]\arrow[ul, "\beta"] & \Omega^1_{A/k}/I\Omega^1_{A/k}
\end{tikzcd}
\end{center}
où $\beta$ est obtenue par passage au quotient de $\psi\vert_I$ modulo $I^2$ (car $\psi(I^2)=p(N^2)=0$) et où les flèches du carré de droite ont été définies dans la discussion précédant la proposition~\ref{propdeform}. Or, $A/I$ étant formellement lisse sur $k$, le morphisme $\alpha$ admet une rétraction $\rho$ d'après \cite[Theorem 25.2 (5)]{zbMATH00043569}. Il suit alors que le morphisme $\gamma=\beta\circ\rho\circ\pi$ convient, ce qui conclut.
\end{proof}

\subsection{Préambule géométrique}\label{espacetgt}

Exception faite du lemme \ref{lemmor}, tout au long de cette section $Y$ (resp. $X$) désigne une variété projective (resp. quasi-projective) sur un corps algébriquement clos $k$. On considère $E$ (resp. $D$) une sous-variété fermée de $Y$ (resp. de~$X$). On suppose que $X$ et $D$ sont lisses. Notons indifféremment $M$ ou $\Mor(Y,X;im(E)\subset D)$ le sous-schéma fermé $$\Mor(Y,X)\times_{\Mor(E,X)}\Mor(E,D)$$ de $\Mor(Y,X)$ paramétrant les morphismes $f:Y\rightarrow X$ tels que $E\subset f^{-1}(D)$.

Fixons désormais un tel morphisme $f$. On se propose d'étudier le schéma $M$ localement en $[f]$. Commençons par décrire l'espace tangent de $M$ en $\left[f\right]$. Partant du fait que $M=\Mor(Y,X)\times_{\Mor(E,X)}\Mor(E,D)$, on a $$T_{[f]}M=T_{[f]}\Mor(Y,X)\times_{T_{\left[f\vert_E\right]}\Mor(E,X)}T_{\left[f\vert_E\right]}\Mor(E,D)$$ où les facteurs du produit fibré se réécrivent sous la forme:
\begin{itemize}
\item $T_{\left[f\right]}\Mor(Y,X)=\coh^0(Y,f^*\mathscr{T}_X)$;
\item $T_{\left[f\vert_E\right]}\Mor(E,X)=\coh^0(E,(f\vert_E)^*\mathscr{T}_X)=\coh^0(E,\left(f^*\mathscr{T}_X\right)\vert_E)$;
\item $T_{\left[f\vert_E\right]}\Mor(E,D)=\coh^0(E,(f\vert_E)^*\mathscr{T}_D)$;
\end{itemize}
ces dernières égalités sur les espaces tangents reposant sur \cite[\S$2.3$]{MR1841091}.
Puisque $D$ est une sous-variété lisse de $X$, on a la suite exacte de faisceaux localement libres sur $D$:
\begin{center}
\begin{tikzcd}
0\arrow[r] & \mathscr{T}_D \arrow[r] & \mathscr{T}_X\vert_D \arrow[r] & \mathscr{N}_{D/X} \arrow[r] & 0
\end{tikzcd}
\end{center}
puis en tirant en arrière par $f\vert_E$ on obtient la suite exacte de faisceaux localement libres sur $E$:
\begin{center}
\begin{tikzcd}
0\arrow[r] & (f\vert_E)^*\mathscr{T}_D \arrow[r] & (f\vert_E)^*(\mathscr{T}_X\vert_D) \arrow[r] & (f\vert_E)^*\mathscr{N}_{D/X} \arrow[r] & 0
\end{tikzcd}.
\end{center}
Mais puisque $(f\vert_E)^*\left(\mathscr{T}_X\vert_D\right)=(f^*\mathscr{T}_X)\vert_E$, cette dernière induit une suite exacte:
\begin{center}
\begin{tikzcd}
0\arrow[r] & \coh^0(E,(f\vert_E)^*\mathscr{T}_D) \arrow[r] & \coh^0(E,\left(f^*\mathscr{T}_X\right)\vert_E) \arrow[r] & \coh^0(E,(f\vert_E)^*\mathscr{N}_{D/X})
\end{tikzcd}.
\end{center}
Enfin, comme le morphisme $T_{\left[f\right]}\Mor(Y,X)\rightarrow T_{\left[f\vert_E\right]}\Mor(E,X)$ est le morphisme de restriction $\coh^0(Y,f^*\mathscr{T}_X)\rightarrow \coh^0(E,\left(f^*\mathscr{T}_X\right)\vert_E)$, on déduit de la suite exacte précédente: $$T_{\left[f\right]}M=\coh^0(Y,\ker(f^*\mathscr{T}_X\rightarrow \iota_{E,*}(f\vert_E)^*\mathscr{N}_{D/X})).$$

Posons $\mathscr{F}=\ker(f^*\mathscr{T}_X\rightarrow \iota_{E,*}(f\vert_E)^*\mathscr{N}_{D/X})$. Nous prouvons maintenant le résultat principal de cette sous-section, qui fournit notamment un critère de lisseté de $M$ en $[f]$. Il s'agit d'une généralisation de \cite[Theorem 2.6]{MR1841091}.

\begin{thm}\label{thmdeform}
Soit $k$ un corps algébriquement clos et $Y$ (resp. $X$) une variété projective (resp. quasi-projective) sur $k$. Considérons également $E$ (resp. $D$) une sous-variété fermée de $Y$ (resp. de $X$). On suppose que $X$ et $D$ sont lisses. Localement en~$[f]$, le schéma $M=\Mor(Y,X;im(E)\subset D)$ peut alors être défini par $h^1(Y,\mathscr{F})$ équations au sein d'une variété lisse de dimension $h^0(Y,\mathscr{F})$. En particulier, toute composante irréductible de $M$ passant par $[f]$ est de dimension au moins $$h^0(Y,\mathscr{F})-h^1(Y,\mathscr{F}).$$
\end{thm}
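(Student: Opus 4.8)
Le plan consiste à dégager, pour le point $[f]$ du schéma $M$, une théorie de déformation--obstruction d'espace tangent $\coh^0(Y,\mathscr{F})$ et d'espace d'obstruction $\coh^1(Y,\mathscr{F})$, puis à conclure par le formalisme de Schlessinger. Comme $Y$ est projective et $X$ quasi-projective, $M$ est un $k$-schéma localement noethérien, et l'anneau local complet $\widehat{\mathscr{O}}_{M,[f]}$ pro-représente le foncteur des déformations infinitésimales du point $[f]$ dans $M$. L'identification de l'espace tangent $T_{[f]}M=\coh^0(Y,\mathscr{F})$ a déjà été faite ci-dessus; toute la difficulté porte donc sur l'espace d'obstruction, que l'on identifie en combinant la théorie du relèvement des $k$-paires de la sous-section \ref{algcomm} à un argument de cohomologie de Čech.

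Soit $0\rightarrow(t)\rightarrow A'\rightarrow A\rightarrow 0$ une petite extension de $k$-algèbres artiniennes locales de corps résiduel $k$ (tout morphisme surjectif de telles algèbres étant composé d'extensions de ce type, on peut s'y ramener) et soit $f_A:Y\times_k\spec A\rightarrow X$ un $A$-point de $M$ réduisant à $[f]$, c'est-à-dire un $k$-morphisme relevant $f$ et tel que $E\times_k\spec A\subset f_A^{-1}(D)$. On cherche à déterminer l'obstruction à l'existence d'un relèvement $f_{A'}$ de $f_A$ définissant un $A'$-point de $M$. On choisit un recouvrement affine fini $Y=\bigcup_i Y_i$, $Y_i=\spec B_i$, tel que $f(Y_i)$ soit contenu dans un ouvert affine $U_i=\spec C_i$ de $X$; on note $\mathfrak{a}_i\subset C_i$ l'idéal de $D\cap U_i$ et $J_i\subset B_i$ celui de $E\cap Y_i$. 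Sur $Y_i$, la donnée de $f_A$ équivaut à celle d'un morphisme de $k$-paires $(C_i,\mathfrak{a}_i)\rightarrow(B_i\otimes_k A,\,J_i(B_i\otimes_k A))$, et un relèvement de $f_A$ sur $Y_i$ est exactement un relèvement de ce morphisme de $k$-paires via la surjection $h:B_i\otimes_k A'\rightarrow B_i\otimes_k A$; son noyau $N_i=B_i\otimes_k(t)$ vérifie $N_i^2=0$ (car $(t)^2\subset\mathfrak{m}_{A'}(t)=0$), et l'on a $h\bigl(J_i(B_i\otimes_k A')\bigr)=J_i(B_i\otimes_k A)$. Les $k$-algèbres $C_i$ et $C_i/\mathfrak{a}_i$ étant formellement lisses puisque $X$ et $D$ sont lisses, le théorème de relèvement établi à la sous-section \ref{algcomm} fournit, sur chaque $Y_i$, un relèvement $f_{A',i}$ de $f_A|_{Y_i}$. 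D'après le fait \ref{diffrelev} et la proposition \ref{propdeform}, sur $Y_i\cap Y_j$ la différence $c_{ij}=f_{A',i}-f_{A',j}$ est un élément de
\[\ker\!\left(\Hom_{C_i}(\Omega^1_{C_i/k},N_i)\rightarrow\Hom_{C_i/\mathfrak{a}_i}(\mathfrak{a}_i/\mathfrak{a}_i^2,\overline{N_i})\right),\]
lequel, $X$ et $D$ étant lisses, s'identifie à $\coh^0(Y_i\cap Y_j,\mathscr{F})$ par le calcul faisceautique effectué avant l'énoncé du théorème (on y dualise $\Omega^1_X$ en $\mathscr{T}_X$ et le conormal de $D$ en $\mathscr{N}_{D/X}$); on notera que $Y_i\cap Y_j$ est affine, $Y$ étant séparée.

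La famille $(c_{ij})$ est un $1$-cocycle de Čech à valeurs dans le faisceau cohérent $\mathscr{F}$ et, le recouvrement $(Y_i)$ étant de Leray pour $\mathscr{F}$ puisque $Y$ est séparée, sa classe vit dans $\coh^1(Y,\mathscr{F})$; c'est exactement l'obstruction à l'existence d'un relèvement global $f_{A'}$, puisqu'elle s'annule si et seulement si l'on peut, après modification des $f_{A',i}$ par un $0$-cocycle de $\mathscr{F}$, les recoller en un morphisme global, lequel fournit alors le relèvement cherché. Ceci fournit la théorie de déformation--obstruction souhaitée pour $[f]\in M$, ses deux espaces étant de dimension finie car $Y$ est propre et $\mathscr{F}$ cohérent. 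Par le formalisme standard des foncteurs de déformation (la démonstration suivant alors le schéma de celle de \cite[Theorem 2.6]{MR1841091}, où l'on remplace $f^*\mathscr{T}_X$ par $\mathscr{F}$), $\widehat{\mathscr{O}}_{M,[f]}$ est le quotient de $k[[x_1,\dots,x_{h^0(Y,\mathscr{F})}]]$ par un idéal engendré par au plus $h^1(Y,\mathscr{F})$ éléments, d'où la description locale de $M$ en $[f]$ annoncée; la minoration de dimension pour toute composante de $M$ passant par $[f]$ en découle par le théorème de l'idéal principal de Krull, chacune des $h^1(Y,\mathscr{F})$ équations faisant chuter la dimension d'au plus un. La difficulté principale est l'étape d'identification de l'espace d'obstruction — relier proprement le problème de déformation local au relèvement des $k$-paires de la sous-section \ref{algcomm}, en vérifiant $N_i^2=0$, $h(J_i(B_i\otimes_k A'))=J_i(B_i\otimes_k A)$ et la lissité formelle de $C_i$ et $C_i/\mathfrak{a}_i$, puis globaliser via Čech en contrôlant que le cocycle des différences représente bien l'obstruction —, la minoration de dimension finale étant ensuite purement formelle.
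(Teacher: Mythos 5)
Your proposal is correct and follows essentially the same route as the paper: the key input in both cases is the deformation--obstruction theory for morphisms of pairs, built from the $k$-pair lifting theory of subsection \ref{algcomm} and globalized by a \v{C}ech cocycle of local lifting discrepancies (this is precisely Lemma \ref{lemmor} of the paper), after which one runs Debarre's argument for \cite[Theorem 2.6]{MR1841091} with $\mathscr{F}$ in place of $f^*\mathscr{T}_X$. The only difference is one of packaging: you conclude via pro-representability and the structure of $\widehat{\mathscr{O}}_{M,[f]}$, whereas the paper applies the obstruction class directly to the single square-zero extension $\mathscr{O}_{V,[f]}/\mathfrak{m}I\rightarrow\mathscr{O}_{V,[f]}/I$ and concludes by Nakayama together with the section trick (Fait \ref{faitalgcomm}), which yields the Zariski-local (rather than merely formal-local) presentation asserted in the statement.
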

La démonstration que nous donnons du théorème précédent suit formellement celle donnée par Debarre de \cite[\textit{ibidem}]{MR1841091} puis nécessite le lemme crucial \ref{lemmor} dont la démonstration justifie le développement fait au sein de la sous-section \ref{algcomm}.
\begin{proof}
Considérons un voisinage affine $U$ de $[f]$ dans $M$, défini par des équations polynomiales $P_1,\dots,P_m$ dans un espace affine $\mathbf{A}^n_k$. Notons $r$ le rang de la matrice jacobienne $(\partial P_i/\partial x_j([f]))$ et, quitte à réordonner les $P_i$, supposons que les $r$ premières lignes sont indépendantes. Posons alors $V$ la sous-variété de $\mathbf{A}^n_k$ définie par $P_1,\dots,P_r$: elle est lisse en $[f]$, et puisque $r$ est à la fois la codimension de $T_{[f]}M$ et de $T_{[f]}V$ dans $k^n$, il vient que~$T_{[f]}M=T_{[f]}V$.

En posant $h^i=h^i(Y,\mathscr{F})$, montrons que dans un voisinage de $[f]$, la variété $M$ peut être définie par $h^1$ équations dans la variété $V$ de dimension $h^0$. Pour ce faire, il suffit de montrer que l'idéal $I$ de l'anneau local $R=\mathscr{O}_{V,[f]}$ définissant le fermé $\spec(\mathscr{O}_{M,[f]})=U\times_V\spec(R)$ de $\spec(R)$ est engendré par $h^1$ éléments: en utilisant le lemme de Nakayama, il suffit alors de montrer que~$I/\mathfrak{m}I$ est un $k$-espace vectoriel de dimension au plus $h^1$. Notons également que puisque $V$ et $M$ ont même espace tangent en $[f]$, l'idéal $I$ est inclus dans le carré de l'idéal maximal~$\mathfrak{m}$ de $\mathscr{O}_{V,[f]}$.

Considérons le morphisme canonique $\spec(\mathscr{O}_{M,[f]})=\spec(R/I)\rightarrow M$. Il correspond à un $k$-morphisme $f_{R/I}:Y\otimes_kR/I\rightarrow X$ étendant $f$ et tel que $E\otimes_kR/I\subset f_{R/I}^{-1}(D)$. D'après le lemme \ref{lemmor} ci-dessous, l'obstruction à étendre $f_{R/I}$ à un morphisme $f_{R/\mathfrak{m}I}:Y\otimes_k R/\mathfrak{m}I\rightarrow X$ tel que $E\otimes_k R/\mathfrak{m}I\subset f_{R/\mathfrak{m}I}^{-1}(D)$ vit dans $\coh^1(Y,\mathscr{F})\otimes_k(I/\mathfrak{m}I)$. Écrivons donc cette obstruction sous la forme $$\sum_{i=1}^{h^1}a_i\otimes \overline{b_i}$$
où $(a_1,\dots,a_{h^1})$ est une $k$-base de $\coh^1(Y,\mathscr{F})$ et les $b_i$ sont dans $I$. Cet élément est alors nul dans 
$$\coh^1(Y,\mathscr{F})\otimes_k(I/(\mathfrak{m}I+(b_1,\dots,b_{h^1})))$$
ce qui signifie que $f_{R/I}$ s'étend en un morphisme $Y\otimes_k R/(\mathfrak{m}I+(b_1,\dots,b_{h^1}))\rightarrow X$ par lequel l'image réciproque de $D$ contient $E\otimes_kR/(\mathfrak{m}I+(b_1,\dots,b_{h^1}))$. Il s'ensuit que le morphisme canonique $\spec(R/I)\rightarrow M$ se factorise par un morphisme $\spec(I/(\mathfrak{m}I+(b_1,\dots,b_{h^1})))\rightarrow M$. Autrement dit, le morphisme identité de $R/I$ se factorise sous la forme
\begin{center}
\begin{tikzcd}
R/I \arrow[r] & R/(\mathfrak{m}I+(b_1,\dots,b_{h^1})) \arrow[r,"\pi"] & R/I
\end{tikzcd}
\end{center}
où $\pi$ est la projection canonique. Le fait \ref{faitalgcomm} assure alors que $I=(\mathfrak{m}I+(b_1,\dots,b_{\coh^1}))$. Ceci implique que $I/\mathfrak{m}I$ est engendré par la famille $(b_1,\dots,b_{h^1})$, ce qui conclut.
\end{proof}

\begin{lem}\label{lemmor}
Soit $Y$ (resp. $X$) une variété sur corps algébriquement clos $k$. Considérons~$E$ (resp. $D$) une sous-variété fermée de $Y$ (resp. $X$). Supposons $X$ et $D$ lisses et choisissons $f:Y\rightarrow X$ telle que $E\subset f^{-1}(D)$. Soit $R$ une $k$-algèbre locale de type fini, d'idéal maximal~$\mathfrak{m}$ et de corps résiduel $k$. Soit $I$ un idéal de $R$ tel que $\mathfrak{m}I=0$. Considérons $f_{R/I}:Y\otimes_kR/I\rightarrow X$ un $k$-morphisme étendant $f$ et tel que $E\otimes_k R/I\subset f_{R/I}^{-1}(D)$.
\begin{enumerate}[label=(\arabic*)]
\item Si $X$ et $Y$ sont affines, alors $f_{R/I}$ s'étend à un $k$-morphisme $f_R:Y\otimes_kR\rightarrow X$ tel que $E\otimes_kR\subset f_R^{-1}(D)$ et deux telles extensions diffèrent d'un élément de $\coh^0(Y,\mathscr{F})\otimes_kI$.
%\item Soient $X=\bigcup_iU_i$ et $Y=\bigcup_iV_i$ deux recouvrements affines finis de $X$ et $Y$ tels que $f(V_i)\subset U_i$. Pour tout $i$, soit $f_{R,i}:V_i\otimes_kR\rightarrow U_i$ une extension de $f_{R/I}\vert_{V_i\otimes_kR/I}$. L'obstruction à recoller les $f_{R,i}$ vit dans $\coh^1(Y,\mathscr{F})\otimes_kI$.
\item L'obstruction à étendre $f_{R/I}$ à un $k$-morphisme $f_R:Y\otimes_kR\rightarrow X$ tel que $E\otimes_kR\subset f_R^{-1}(D)$ vit dans $\coh^0(Y,\mathscr{F})\otimes_kI$.
\end{enumerate}
\end{lem}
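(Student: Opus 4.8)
Le plan est de lire les deux assertions comme relevant de la théorie de la déformation relative du morphisme $f$ sous la contrainte d'incidence $\mathrm{im}(E)\subset D$, puis de la ramener au formalisme de $k$-paires développé à la sous-section \ref{algcomm}. Concrètement, je commencerais par écrire localement $X=\spec C$ et $Y=\spec B$, avec $D$ (resp. $E$) défini par l'idéal $\mathfrak{a}\subset C$ (resp. $\mathfrak{b}\subset B$): un morphisme $f\colon Y\to X$ vérifiant $E\subset f^{-1}(D)$ est alors exactement un morphisme de $k$-paires $(C,\mathfrak{a})\to(B,\mathfrak{b})$, et étendre $f_{R/I}$ en $f_R$ en préservant l'incidence revient à relever un tel morphisme le long de la réduction $(B\otimes_kR,\mathfrak{b}\otimes R)\to(B\otimes_kR/I,\mathfrak{b}\otimes R/I)$. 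Le noyau de cette surjection est $N:=B\otimes_kI$; comme $\mathfrak{m}I=0$ force $I\subset\mathfrak{m}$ (sinon $R=k$ et il n'y a rien à démontrer), on a $I^2\subset\mathfrak{m}I=0$, donc $N^2=0$ et les hypothèses de la sous-section \ref{algcomm} sont remplies.

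Pour le point (1), où $X$ et $Y$ sont affines, j'invoquerais le théorème d'existence des relèvements de la sous-section \ref{algcomm}: $X$ et $D$ étant lisses sur $k$, les algèbres $C$ et $C/\mathfrak{a}$ sont formellement lisses, la réduction $B\otimes_kR\to B\otimes_kR/I$ est surjective et envoie $\mathfrak{b}\otimes R$ sur $\mathfrak{b}\otimes R/I$, si bien qu'un relèvement de paires — c'est-à-dire une extension $f_R$ vérifiant $E\otimes_kR\subset f_R^{-1}(D)$ — existe. D'après le fait \ref{diffrelev}, l'ensemble de ces relèvements est un espace homogène principal sous $\Der_k(C,N;\mathfrak{a}\to (\mathfrak{b}\otimes R)\cap N)$, et la proposition \ref{propdeform} l'identifie au noyau de $\Hom_C(\Omega^1_{C/k},N)\to\Hom_{C/\mathfrak{a}}(\mathfrak{a}/\mathfrak{a}^2,\overline N)$. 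Comme $I$ est un $k$-espace vectoriel de dimension finie et que $\Omega^1_{C/k}$ est de présentation finie, on a $\Hom_C(\Omega^1_{C/k},B\otimes_kI)=\coh^0(Y,f^*\mathscr{T}_X)\otimes_kI$, et la condition d'incidence découpe précisément $\coh^0(Y,\mathscr{F})\otimes_kI$, où $\mathscr{F}=\ker(f^*\mathscr{T}_X\to\iota_{E,*}(f|_E)^*\mathscr{N}_{D/X})$: c'est bien le groupe de différences annoncé.

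Pour le point (2), je globaliserais par un argument de Čech. On choisit un recouvrement fini de $Y$ par des ouverts affines $U_i$ assez petits pour que $f(U_i)$ soit contenu dans un ouvert affine $V_i$ de $X$; comme $X$ et $D$ sont lisses, chaque donnée $(U_i,U_i\cap E)\to(V_i,V_i\cap D)$ relève du point (1), d'où une extension locale $f_i$ sur $U_i\otimes_kR$. Sur les intersections, la différence $\sigma_{ij}:=f_i-f_j$ appartient à $\coh^0(U_{ij},\mathscr{F})\otimes_kI$ d'après la structure de torseur du point (1), et la famille $(\sigma_{ij})$ est un $1$-cocycle de Čech à valeurs dans $\mathscr{F}\otimes_kI$. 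Modifier les $f_i$ à l'intérieur de leurs torseurs revient à lui ajouter un cobord, de sorte qu'une extension globale $f_R$ existe si et seulement si la classe de ce cocycle s'annule. L'obstruction vit donc dans $\coh^1(Y,\mathscr{F})\otimes_kI$ — l'énoncé porte $\coh^0$, mais c'est ce $\coh^1(Y,\mathscr{F})\otimes_kI$ qui est réellement l'obstruction, et c'est lui qui est utilisé dans la démonstration du théorème \ref{thmdeform}.

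Le principal écueil, qui justifie le recours à la sous-section \ref{algcomm} plutôt qu'à la seule déformation de $\Mor(Y,X)$, est de maintenir la condition $E\subset f^{-1}(D)$ aussi bien au relèvement local qu'au recollement: c'est elle qui substitue le faisceau noyau $\mathscr{F}$ à $f^*\mathscr{T}_X$, et que la proposition \ref{propdeform} encode via le noyau de $\Hom_C(\Omega^1_{C/k},N)\to\Hom_{C/\mathfrak{a}}(\mathfrak{a}/\mathfrak{a}^2,\overline N)$. Il resterait à vérifier deux points de routine: que la classe du cocycle $(\sigma_{ij})$ ne dépend pas, à un cobord près, du choix des relèvements locaux, de sorte que l'obstruction est bien définie; et que la cohomologie de Čech du recouvrement affine calcule $\coh^1(Y,\mathscr{F})$, ce qui vaut puisque le recouvrement est affine.
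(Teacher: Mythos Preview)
Your proposal is correct and follows essentially the same route as the paper: translation into the language of $k$-paires, application of the existence theorem and of proposition~\ref{propdeform} from sous-section~\ref{algcomm} for part~(1), then a \v{C}ech globalisation for part~(2). You are also right that the ``$\coh^0$'' in the statement of part~(2) is a typo for $\coh^1(Y,\mathscr{F})\otimes_kI$, as the paper's own proof makes clear.
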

\begin{proof}
Montrons $(1)$ et supposons donc que $Y$ et $X$ sont affines avec $Y=\spec(B)$, $X=\spec(A)$, $E$ défini par un idéal $J_E$ de $B$ et $D$ par un idéal $J_D$ de $A$. L'hypothèse sur~$f_{R/I}$ assure que le morphisme de $k$-algèbres $f^{\#}_{R/I}:A\rightarrow B\otimes_kR/I$ associé à $f_{R/I}$ définit un morphisme de $k$-paires $$f^{\#}_{R/I}:(A,J_D)\rightarrow(B\otimes_kR/I,J_E\otimes_kR/I).$$On cherche alors un morphisme de $k$-paires $f^{\#}_R:(A,J_D)\rightarrow(B\otimes_kR,J_E\otimes_kR)$ s'inscrivant dans le diagramme commutatif suivant
\begin{equation}\label{formuleh0}
\begin{tikzcd}[column sep= 3em, row sep= 3 em]
& (B\otimes_kR,J_E\otimes_kR) \arrow[d, "h"] \\
(A,J_D) \arrow[r,"f_{R/I}^{\#}",swap]\arrow[ur,"f^{\#}_R",dashrightarrow] & (B\otimes_kR/I,J_E\otimes_kR/I)
\end{tikzcd}\tag{$\star\star$}
\end{equation}
où la flèche verticale désigne le morphisme quotient, de noyau $B\otimes_kI$. Puisque les algèbres~$A$ et $A/J_D$ sont $k$-lisses, elles sont formellement lisses sur $k$, et l'hypothèse $\mathfrak{m}I=0$ implique que le noyau $N=B\otimes_kI$ de $h$ est de carré nul: le théorème \ref{thmdeform} assure alors l'existence d'un relèvement $f^{\#}_R$. En outre, puisque l'image de $B\otimes_kI$ par la projection dans $B/J_E\otimes_kR$ est $B/J_E\otimes_k I$, le fait \ref{diffrelev} combiné à la proposition \ref{propdeform} assure que deux tels relèvements de $k$-paires diffèrent d'un élément de
$$\ker\left(\Hom_A(\Omega^1_{A/k},B\otimes_kI)\rightarrow \Hom_{A/J_D}(J_D/J_D^2,B/J_E\otimes_k I)\right)$$
où l'hypothèse $\mathfrak{m}I=0$ assure que la structure de $A$-module sur $B\otimes_kI$ est celle donnée par $a.(b\otimes i)=(ab)\otimes i$. De même, la structure de $A/J_D$-module de $B/J_E\otimes_kI$ est héritée de celle de $B/J_E$.  Par platitude de $I$ sur $k$, deux tels relèvements de $k$-paires de $f^{\#}_{R/I}$ via~$h$ diffèrent donc d'un élément de
$$\left[\ker\left(\Hom_A(\Omega^1_{A/k},B)\rightarrow \Hom_{A/J_D}(J_D/J_D^2,B/J_E)\right)\right]\otimes_kI.$$

Montrons que $\ker\left(\Hom_A(\Omega^1_{A/k},B)\rightarrow \Hom_{A/J_D}(J_D/J_D^2,B/J_E)\right)=\coh^0(Y,\mathscr{F})$. En effet, d'une part $\coh^0(X,\mathscr{T}_X)=\Hom_A(\Omega^1_{A/k},A)$ et donc $$\coh^0(Y,f^*\mathscr{T}_X)=\Hom_A(\Omega^1_{A/k},A)\otimes_AB=\Hom_A(\Omega^1_{A/k},B)$$ où la dernière égalité provient du fait que $\Omega^1_{A/k}$ est projectif, par lisseté formelle de $A/k$. D'autre part, puisque $\coh^0(D,\mathscr{N}_{D/X})=\Hom_{A/J_D}(J_D/J_D^2,A/J_D)$, il vient que $$\coh^0(E,f\vert_E^*\mathscr{N}_{D/X})=\Hom_{A/J_D}(J_D/J_D^2,A/J_D)\otimes_{A/J_D}B/J_E=\Hom_{A/J_D}(J_D/J_D^2,B/J_E)$$où la dernière égalité suit du fait que le $A/J_D$-module conormal $J_D/J_D^2$ est projectif (voir \cite[Chapter II, Theorem 8.17.(2)]{MR0463157}). On en déduit que
$$\coh^0(Y,\iota_{E,*}f\vert_E^*\mathscr{N}_{D/X})=\Hom_{A/J_D}(J_D/J_D^2,B/J_E)$$
vu comme $B$-module, et donc que
$$\coh^0(Y,\mathscr{F})=\ker\left(\Hom_A(\Omega^1_{A/k},B)\rightarrow \Hom_{A/J_D}(J_D/J_D^2,B/J_E)\right).$$
Ainsi, deux relèvements de $k$-paires de $f^{\#}_{R/I}$ via $h$ dans le diagramme (\ref{formuleh0}) diffèrent d'un élément de~$\coh^0(Y,\mathscr{F})\otimes_kI$.\\

Pour montrer $(2)$, soient $X=\bigcup_iU_i$ et $Y=\bigcup_iV_i$ deux recouvrements affines finis de~$X$ et $Y$ tels que $f(V_i)\subset U_i$. Par $(1)$, il existe pour tout $i$ une extension $f_{R,i}:V_i\otimes_kR\rightarrow U_i$ de $f_{R/I}\vert_{V_i\otimes_kR/I}$. On sait également par $(1)$ que sur $V_i\cap V_j$, les relèvements $f_{R,i}$ et $f_{R,j}$ diffèrent d'un élément de $\coh^0(V_i\cap V_j,\mathscr{F})\otimes_kI$. Par construction de la cohomologie de \v{C}ech, la famille de ces éléments correspond à un $1$-cocycle et donc à un élément de $\coh^1(Y,\mathscr{F})\otimes_kI$ ce qui conclut.
\end{proof}

\begin{fait}\label{faitalgcomm}\cite[\S2.2, Lemma 2.8]{MR1841091}
Soit $R$ un anneau local noethérien d'idéal maximal $\mathfrak{m}$ et soit $I$ un idéal de $R$ contenu dans $\mathfrak{m}^2$. Si le morphisme quotient $R\rightarrow R/I$ admet une section, alors $I=0$.
\end{fait}

\subsection{Un lemme de déformation}\label{lemdeform}
Dans cette sous-section, fixons $X$ une variété quasi-projective lisse sur $K$ plongée dans $\mathbf{P}^n_K$, $D$ un diviseur lisse de $X$ et $E$ un sous-schéma fermé strict de $\mathbf{P}^1_K$ dont l'inclusion $E\xhookrightarrow{}\mathbf{P}^1_K$ est notée $\iota_E$. Notons $d$ la dimension de $X$.

Considérons le foncteur envoyant un $K$-schéma $T$ sur l'ensemble des $K$-morphismes $f:\mathbf{P}^1_K\times_KT\rightarrow X$ tels que:
\begin{enumerate}[label=(\roman*)]
\item le morphisme $f$ est de degré $2$, c'est-à-dire que $f^*(\mathscr{O}_X(1))$ est un fibré en droites isomorphe à $\mathscr{O}_{\mathbf{P}^1_T}(2)$;
\item $f(\{0\}\times T)$ et $f(\{1\}\times T)$ sont inclus dans $X\backslash D$;
\item $E\times_KT\subset f^{-1}(D)$.
\end{enumerate}

Remarquons que ce foncteur est représentable. En effet, la condition (i) définit le sous-schéma ouvert $\Mor_2(\mathbf{P}^1_K,X)$ de $\Mor(\mathbf{P}^1_K,X)$ paramétrant les morphismes de $\mathbf{P}^1_K$ dans~$X$ de degré~$2$ (on peut se référer à \cite[\S2.1]{MR1841091} pour le fait que c'est un sous-schéma ouvert). La condition (ii) définit le sous-schéma ouvert de $\Mor(\mathbf{P}^1_K,X)$ paramétrant les morphismes de $\mathbf{P}^1_K$ dans $X$ envoyant $0$ et $1$ hors de $D$. Enfin, la condition (iii) définit le fermé $M=\Mor(\mathbf{P}^1_K,X;im(E)\subset D)$ de~$\Mor(\mathbf{P}^1_K,X)$ étudié dans la sous-section précédente. Ce foncteur est donc représenté par un sous-schéma ouvert $H$ de $M$.

On dispose alors d'un morphisme d'évaluation
$$ev_{0,1}:H\rightarrow X\times X$$
envoyant un morphisme $f$ sur $(f(0),f(1))$, ainsi que d'un morphisme d'évaluation$$ev:\mathbf{P}^1_K\times H\rightarrow X$$envoyant $(p,\left[f\right])$ sur $f(p)$.\\

Cette sous-section est dédiée à la démonstration du lemme géomé\-trique \ref{lemkollar} portant sur la différentielle de $ev$ sur $(\mathbf{P}^1_K\backslash E)\times H$, et dont la démonstration est une variante de celle de \cite[Chapter II.3, Proposition~3.10]{MR1440180}. Ce lemme est utilisé plus bas pour démontrer le théorème \ref{purafmod}. Avant de l'énoncer, rappelons que $H$ étant un ouvert de$$M=\Mor(\mathbf{P}^1_K,X;im(E)\subset D)$$il vient que pour tout point géométrique $[f]\in H$ on a $T_{[f]}H=T_{[f]}M$ ce qui, d'après la sous-section \ref{espacetgt}, donne l'identification:
$$T_{[f]}H=\coh^0(\mathbf{P}^1_K,\mathscr{F})$$
où $\mathscr{F}=\ker(f^*\mathscr{T}_X\rightarrow \iota_{E,*}(f\vert_E)^*\mathscr{N}_{D/X})$ est localement libre, car sous-faisceau du faisceau localement libre $f^*\mathscr{T}_X$ sur la courbe lisse $\mathbf{P}^1_K$. Remarquons également que $\mathscr{F}$ coïncide avec~$f^*\mathscr{T}_X$ sur $\mathbf{P}^1_K\backslash E$, si bien que $\mathscr{F}$ est localement libre de rang $d=\dim(X)$. Par \cite[Theorem $4.1$]{MR662762} il existe des entiers $a_1,\dots,a_d$ et un isomorphisme $$\mathscr{F}\simeq\mathscr{O}(a_1)\oplus\dots\oplus\mathscr{O}(a_d).$$ En outre, pour $q\in\mathbf{P}^1_K\backslash E$, notons $res_q:\coh^0(\mathbf{P}^1_K,\mathscr{F})\rightarrow\mathscr{F}=T_{f(q)}X$ le morphisme de restriction en $q$, qui s'identifie au morphisme
\begin{center}
\begin{tikzcd}[row sep=0.1pt]
\bigoplus_{1\leq i\leq d}\coh^0(\mathbf{P}^1_K,\mathscr{O}(a_i))\arrow[r] & \bigoplus_{1\leq i \leq d} \mathscr{O}(a_i)\\
(s_1,\dots,s_d)\arrow[r,mapsto] & (s_1(q),\dots ,s_d(q))
\end{tikzcd}
\end{center}
et dont le rang est $\Card\{i:a_i\geq0\}$, laquelle quantité dépend de $f$ (par définition de $\mathscr{F}$) mais est indépendante de $q$.
\begin{lem}\label{lemkollar}
Supposons $E$ de longueur au plus $2$, fixons $[f]$ un point géométrique de $H$ et reprenons les notations précédant le lemme. Alors le 	rang de la différentielle de $ev$ est constant sur $(\mathbf{P}^1_K\backslash E)\times[f]$, égal à $$\Card\{i:a_i\geq0\}.$$
\end{lem}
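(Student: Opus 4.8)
The plan is to compute the image of the differential of $ev$ at an arbitrary geometric point $(q,[f])$ with $q\in\mathbf{P}^1_K\backslash E$ by decomposing the tangent space $T_{(q,[f])}(\mathbf{P}^1_K\times H)=T_q\mathbf{P}^1_K\oplus T_{[f]}H$. Along the second summand $0\oplus T_{[f]}H=0\oplus\coh^0(\mathbf{P}^1_K,\mathscr{F})$, the differential of $ev$ is, by the standard description of evaluation morphisms in the deformation theory of $\Mor$-schemes (in the spirit of \cite[\S2.2]{MR1841091} and \cite[Chapter II.3]{MR1440180}), exactly the restriction map $res_q$; along the first summand $T_q\mathbf{P}^1_K\oplus 0$ it is the derivative $df_q\colon T_q\mathbf{P}^1_K\rightarrow T_{f(q)}X$ of $f$ itself. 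Hence $\im(d(ev)_{(q,[f])})=\im(df_q)+\im(res_q)$. Since the rank of $res_q$ equals $\Card\{i:a_i\geq 0\}$, which does not depend on $q$, it only remains to check the inclusion $\im(df_q)\subseteq\im(res_q)$ for every $q\in\mathbf{P}^1_K\backslash E$.

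To establish this inclusion, I would exhibit a global vector field $\theta\in\coh^0(\mathbf{P}^1_K,\mathscr{T}_{\mathbf{P}^1_K})$ that is tangent to $E$ — meaning the associated derivation preserves the ideal sheaf $\mathscr{I}_E$ — and that does not vanish at $q$. Writing $df\colon\mathscr{T}_{\mathbf{P}^1_K}\rightarrow f^*\mathscr{T}_X$ for the derivative of $f$, the section $df(\theta)\in\coh^0(\mathbf{P}^1_K,f^*\mathscr{T}_X)$ then lands in $\coh^0(\mathbf{P}^1_K,\mathscr{F})$: indeed, unwinding the definition of the morphism $f^*\mathscr{T}_X\rightarrow\iota_{E,*}(f\vert_E)^*\mathscr{N}_{D/X}$, its value on $df(\theta)$, paired against conormal classes, is computed by restricting to $E$ the function $\theta(s\circ f)$ for local defining functions $s$ of $D$ near $f(E)$; since $E\subset f^{-1}(D)$ forces $s\circ f\in\mathscr{I}_E$, tangency of $\theta$ to $E$ gives $\theta(s\circ f)\in\mathscr{I}_E$, so this restriction vanishes. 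As $res_q(df(\theta))=df_q(\theta(q))$ and $\theta(q)$ spans the line $T_q\mathbf{P}^1_K$, we obtain $\im(df_q)\subseteq\im(res_q)$, and therefore the rank of $d(ev)$ is $\Card\{i:a_i\geq 0\}$ at every such point.

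It remains to produce such a $\theta$, and this is where the hypothesis that $E$ has length at most $2$ enters: then the support $E_{\mathrm{red}}$ consists of at most two closed points, and for a subscheme of this length of the smooth curve $\mathbf{P}^1_K$, tangency of $\theta$ to $E$ amounts to $\theta$ vanishing at each point of $E_{\mathrm{red}}$. The vector fields vanishing along $E_{\mathrm{red}}$ form $\coh^0(\mathbf{P}^1_K,\mathscr{T}_{\mathbf{P}^1_K}(-E_{\mathrm{red}}))$, which is nonzero since $\deg\mathscr{T}_{\mathbf{P}^1_K}=2\geq\deg E_{\mathrm{red}}$; choosing $\theta$ whose divisor of zeros is exactly $E_{\mathrm{red}}$ (a double zero if $E_{\mathrm{red}}$ is a single point, and any vector field not vanishing at $q$ if $E_{\mathrm{red}}=\emptyset$) ensures $\theta(q)\neq 0$ for every $q$ outside $E_{\mathrm{red}}$, in particular for every $q\in\mathbf{P}^1_K\backslash E$. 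The main obstacle is the bookkeeping of the middle paragraph: identifying, at the level of sections, the map $f^*\mathscr{T}_X\rightarrow\iota_{E,*}(f\vert_E)^*\mathscr{N}_{D/X}$ and verifying that tangency of $\theta$ to $E$ — including the case where $E$ is a non-reduced point — forces $df(\theta)$ into $\mathscr{F}$; the rest is a variant of Kollár's argument for \cite[Chapter II.3, Proposition 3.10]{MR1440180}.
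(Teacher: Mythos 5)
Your proposal is correct and follows essentially the same route as the paper: the same decomposition of $d_{(q,[f])}ev$ into $(t,s)\mapsto d_qf(t)+res_q(s)$, and the same reduction to the inclusion $\im(d_qf)\subseteq\im(res_q)$ obtained by pushing forward a vector field on $\mathbf{P}^1_K$ adapted to $E$. The only (harmless) variation is in the last step: where you explicitly construct a global vector field tangent to $E$ and nonvanishing at $q$ (which obliges you to carry out the normal-bundle bookkeeping to see that $df(\theta)\in\coh^0(\mathbf{P}^1_K,\mathscr{F})$), the paper works with the slightly smaller space $\coh^0(\mathbf{P}^1_K,\mathscr{T}_{\mathbf{P}^1_K}\otimes\mathscr{I}_E)$ of fields vanishing along the scheme $E$ — whose image under $df$ lands in $f^*\mathscr{T}_X\otimes\mathscr{I}_E\subseteq\mathscr{F}$ with no computation — and obtains a section nonvanishing at $q$ from the vanishing of $\coh^1(\mathbf{P}^1_K,\mathscr{T}_{\mathbf{P}^1_K}\otimes\mathscr{I}_E(-q))$, which is exactly where the hypothesis that $E$ has length at most $2$ enters in both arguments.
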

\begin{proof}
L'énoncé est géométrique: pour éviter d'alourdir les notations, on suppose donc ici que $K$ est algébriquement clos. Choisissons $(q,[f])\in(\mathbf{P}^1(K)\backslash E(K))\times H(K)$ et reprenons les notations de la discussion précédant le lemme.

Commençons par donner une expression de la différentielle $$d_{(q,\left[f\right])}ev:T_q\mathbf{P}^1_K\oplus\coh^0(\mathbf{P}^1_K,\mathscr{F})\rightarrow T_{f(q)}X.$$
Le schéma $H$ étant localement fermé dans $\Mor(\mathbf{P}^1_K,X)$ le morphisme $d_{(q,\left[f\right])}ev$ est la restriction de la différentielle en $(q,\left[f\right])$ du morphisme d'évaluation $\mathbf{P}^1_K\times\Mor(\mathbf{P}^1_K,X)\rightarrow X$ envoyant $(x,[g])$ sur $g(x)$. Or l'expression de cette dernière étant donnée par \cite[Chapter II, Proposition~$3.4$]{MR1440180}, on obtient l'expression suivante de $d_{(q,\left[f\right])}ev$:
\begin{center}
\begin{tikzcd}[row sep=0.1pt]
d_{(q,\left[f\right])}ev: & [-3em]T_q\mathbf{P}^1_K \oplus \coh^0(\mathbf{P}^1_K,\mathscr{F})\arrow[r] & T_{f(q)}X \\
 & (t,s)\arrow[r, mapsto] & d_qf(t) + res_q(s)
\end{tikzcd}.
\end{center}

Montrons maintenant que $im(d_{(q,\left[f\right])}ev)=im(res_q)$. Pour ce faire, commençons par remarquer que $\coh^0(\mathbf{P}^1_K,f^*\mathscr{T}_X\otimes\mathscr{I}_E)$ est inclus dans $\coh^0(\mathbf{P}^1_K,\mathscr{F})$, car toute section globale de $f^*\mathscr{T}_X$ qui s'annule le long de $E$ induit une section globale nulle de $\iota_{E,*}(f\vert_E)^*\mathscr{N}_{D/X}$. De plus, comme $q$ est hors de $E$ on a le diagramme commutatif suivant
\begin{center}
\begin{tikzcd}[column sep= 5em]
\coh^0(\mathbf{P}^1_K,\mathscr{T}_{\mathbf{P}^1_K}\otimes\mathscr{I}_E) \arrow[r]\arrow[d, "df\otimes \mathscr{I}_E"] & \mathscr{T}_{\mathbf{P}^1_K}\vert_q=T_q\mathbf{P}^1_K\arrow[d,"d_qf"]\\
\coh^0(\mathbf{P}^1_K,f^*\mathscr{T}_X\otimes\mathscr{I}_E) \arrow[r]\arrow[d,hook] & f^*T_X\vert_q=T_{f(q)}X\\
\coh^0(\mathbf{P}^1_K,\mathscr{F})\arrow[ur,"res_q", swap, bend right=20] &
\end{tikzcd}
\end{center}
où les flèches horizontales sont les morphismes de restriction en $q$. Il suffit alors de montrer que $\alpha$ est surjective, car le diagramme précédent assure alors que $im(d_qf)\subset im(res_q)$, ce qui conclut. Pour montrer la surjectivité de $\alpha$, partons de la suite exacte courte
\begin{center}
\begin{tikzcd}
0 \arrow[r] & \mathscr{O}_{\mathbf{P}^1_K}(-q) \arrow[r] & \mathscr{O}_{\mathbf{P}^1_K} \arrow[r] & \mathscr{O}_q \arrow[r] & 0
\end{tikzcd}.
\end{center}
Puisque $q$ est hors de $E$ et comme le faisceau $\mathscr{T}_{\mathbf{P}^1_K}\otimes\mathscr{I}_E$ est inversible, la tensorisation par~$\mathscr{T}_{\mathbf{P}^1_K}\otimes\mathscr{I}_E$ de la suite exacte précédente donne une suite exacte
\begin{center}
\begin{tikzcd}
0 \arrow[r] & \mathscr{T}_{\mathbf{P}^1_K}\otimes\mathscr{I}_E(-q) \arrow[r] & \mathscr{T}_{\mathbf{P}^1_K}\otimes\mathscr{I}_E \arrow[r] & \mathscr{T}_{\mathbf{P}^1_K}\vert_q \arrow[r] & 0
\end{tikzcd}.
\end{center}
La surjectivité de $\alpha$ suit alors de l'annulation de $\coh^1(\mathbf{P}^1_K,\mathscr{T}_{\mathbf{P}^1_K}\otimes\mathscr{I}_E(-q))$, laquelle est assurée par le fait que $E$ est un fermé de longueur au plus $2$ de $\mathbf{P}^1_K$.

De là, pour tout $q\in\mathbf{P}^1(K)\backslash E(K)$ le rang de $d_{(q,\left[f\right])}ev$ est également celui de $res_q$, qui est égal à $\Card\{i:a_i\geq0\}$, ce qui conclut.
\end{proof}
Dans la suite, le lemme \ref{lemkollar} est également utilisé sous la forme suivante:
\begin{cor}\label{corcrucial}
Supposons $E$ de longueur au plus $2$. S'il existe un point géométrique~$(p,[f])$ de $(\mathbf{P}^1_K\backslash E)\times H$ en lequel la différentielle de $ev$ est surjective, alors $H$ est lisse en $[f]$.
\end{cor}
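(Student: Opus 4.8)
Le plan est d'exploiter le lemme \ref{lemkollar} pour traduire l'hypothèse de surjectivité de la différentielle de $ev$ en une information sur le type de scindage du fibré $\mathscr{F}$, d'en déduire l'annulation de $\coh^1(\mathbf{P}^1_K,\mathscr{F})$, puis d'appliquer le théorème \ref{thmdeform} afin de conclure que $M$, et donc l'ouvert $H$, est lisse en $[f]$.

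Plus précisément, je commencerais par observer que si la différentielle $d_{(p,[f])}ev$ est surjective en un point géométrique $(p,[f])$ avec $p$ hors de $E$, alors son rang vaut $d=\dim(X)$, puisque $T_{f(p)}X$ est de dimension $d$. Or le lemme \ref{lemkollar} identifie ce rang à $\Card\{i:a_i\geq 0\}$, où $\mathscr{F}\simeq\mathscr{O}(a_1)\oplus\dots\oplus\mathscr{O}(a_d)$ est une décomposition fournie par le théorème de Birkhoff-Grothendieck. On obtient donc $\Card\{i:a_i\geq 0\}=d$, c'est-à-dire $a_i\geq 0$ pour tout $i$. Comme $\coh^1(\mathbf{P}^1_K,\mathscr{O}(a))=0$ dès que $a\geq -1$, il s'ensuit que $\coh^1(\mathbf{P}^1_K,\mathscr{F})=\bigoplus_{i=1}^d\coh^1(\mathbf{P}^1_K,\mathscr{O}(a_i))=0$.

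Il resterait alors à appliquer le théorème \ref{thmdeform} avec $Y=\mathbf{P}^1_K$: puisque $h^1(\mathbf{P}^1_K,\mathscr{F})=0$, ce théorème affirme que, localement en $[f]$, le schéma $M=\Mor(\mathbf{P}^1_K,X;im(E)\subset D)$ peut être défini par $h^1(\mathbf{P}^1_K,\mathscr{F})=0$ équations au sein d'une variété lisse de dimension $h^0(\mathbf{P}^1_K,\mathscr{F})$, ce qui signifie précisément qu'il est lisse en $[f]$. Comme $H$ est un ouvert de $M$, il est lui aussi lisse en $[f]$, ce qui conclut.

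L'essentiel du travail technique ayant déjà été accompli dans le lemme \ref{lemkollar} et dans le théorème \ref{thmdeform}, il n'y a ici aucune difficulté réelle; le seul point auquel il faut prêter attention est que le point $p$ soit pris hors de $E$, faute de quoi le lemme \ref{lemkollar} — qui calcule le rang de la différentielle de $ev$ précisément sur $(\mathbf{P}^1_K\backslash E)\times[f]$ — ne s'appliquerait pas et le lien entre surjectivité de la différentielle et annulation de $\coh^1(\mathbf{P}^1_K,\mathscr{F})$ disparaîtrait.
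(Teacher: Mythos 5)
Votre démonstration est correcte et suit essentiellement la même approche que celle de l'article : la surjectivité de la différentielle en $(p,[f])$ avec $p$ hors de $E$ donne, via le lemme \ref{lemkollar}, l'égalité $d=\Card\{i:a_i\geq0\}$, donc tous les $a_i$ sont positifs, d'où $\coh^1(\mathbf{P}^1_K,\mathscr{F})=0$, et le théorème \ref{thmdeform} (version géométrique) conclut. Aucune divergence à signaler.
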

\begin{proof}
La différentielle de $ev$ étant surjective en $(p,[f])$, on a $$d=\Card\{i:a_i\geq0\}$$c'est-à-dire que $\mathscr{F}=\mathscr{O}(a_1)\oplus\dots\oplus\mathscr{O}(a_d)$ où les entiers $a_i$ sont positifs. Il s'ensuit que~$\coh^1(\mathbf{P}^1_K,\mathscr{F})=0$ ce qui, par le théorème \ref{thmdeform}, assure que $H$ est lisse en $[f]$.
\end{proof}

\subsection{Un critère de pureté}
Au cours de cette sous-section, on reprend les notations de la sous-section \ref{lemdeform} en supposant désormais que $D$ est une section hyperplane lisse de $X$ et que $$E=\spec(\mathscr{O}_{\mathbf{P}^1_K,\infty}/\mathfrak{m}_{\mathbf{P}^1_K,\infty}^2).$$ Ce choix de $E$ et $D$ permet d'assurer que la condition (iii) définissant $H$ dans la sous-section~\ref{lemdeform} peut être lue comme une égalité d'ensembles:
\begin{schol}\label{scholie}
Soit $f:\mathbf{P}^1_K\rightarrow X$ un morphisme dont la composée avec $X\xhookrightarrow{}\mathbf{P}^n_K$ est de degré $2$. Alors les assertions suivantes sont équivalentes:
\begin{enumerate}[label=(\roman*)]
\item $E\subset f^{-1}(D)\subsetneq \mathbf{P}^1_K$;
\item $E=f^{-1}(D)$;
\item on a l'égalité d'ensembles $\{\infty\}=f^{-1}(D)$.
\end{enumerate}
\end{schol}
\begin{proof}
Montrons que (i) implique (ii). Puisque $f(\mathbf{P}^1_K)\not\subset D$, $f^{-1}(D)$ est un diviseur effectif de $\mathbf{P}^1_K$. Ce dernier est de longueur $2$ car $f$ est de degré $2$ et $D$ une section hyperplane de~$X$. Mais $E\subset f^{-1}(D)$ et $E$ est l'unique sous-schéma fermé de longueur $2$ de~$\mathbf{P}^1_K$ contenant~$\infty$. Il en découle que $E=f^{-1}(D)$, d'où l'assertion (ii).

L'assertion (iii) suit naturellement de (ii). Supposons donc (iii) et montrons (i). L'hypothèse de (iii) assure que $f(\mathbf{P}^1_K)\not\subset D$, si bien que $f^{-1}(D)$ est un diviseur effectif de~$\mathbf{P}^1_K$. Comme précédemment, ce diviseur est de longueur $2$ par hypothèse sur $f$ et $D$. Mais comme il contient $\infty$, il est égal à $E$ ce qui conclut.
\end{proof}
Cette partie est dédiée à la démonstration du théorème suivant, qui est un critère de pureté pour l'approximation forte faisant intervenir le schéma $H$ défini au début de la sous-section \ref{lemdeform}:

\begin{thm}\label{purafmod}
Supposons que:
\begin{enumerate}[label=(\roman*)]
\item la variété $X$ vérifie l'approximation faible;
\item il existe une composante irréductible $F$ de $H$ telle que $ev_{0,1}\vert_F$ est dominante, de fibre générique géométriquement intègre et rationnellement connexe.
\end{enumerate}
Alors la pureté de l'approximation forte vaut pour $X\backslash D$ hors de tout ensemble fini non vide de places de $K$.
\end{thm}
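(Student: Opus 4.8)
The plan is to deduce strong approximation for an arbitrary open $U \subset X\backslash D$ containing $(X\backslash D)^{(1)}$ by producing, near a prescribed adelic point, a well-behaved conic through it. Fix $S$ finite non-empty, a model and an adelic point $(\alpha_v)_v$ in $U(\mathbf{A}_K^S)$ with $U(K_v)\neq\emptyset$ for $v\in S$; I want a $K$-point of $U$ close to it at the finitely many places where a constraint is imposed. First I would use hypothesis (ii): the general fibre of $ev_{0,1}|_F$ is geometrically integral and rationally connected, so by the Graber--Harris--Starr theorem (quoted in the introduction) together with weak approximation for $X$ in hypothesis (i), I can arrange a $K$-morphism $f\colon\mathbf{P}^1_K\to X$ lying in $F$ whose marked points $f(0),f(1)$ are prescribed approximations of two of the relevant $\alpha_v$, with $f^{-1}(D)=\{\infty\}$ scheme-theoretically equal to $E$ by the Scholie \ref{scholie}, and such that $ev$ is smooth (its differential surjective) at some geometric point $(p,[f])$ with $p\neq\infty$; by Corollary \ref{corcrucial} this makes $H$ smooth at $[f]$, hence $ev$ itself is a smooth morphism near the whole fibre $\mathbf{P}^1\setminus E$ by the rank computation in Lemma \ref{lemkollar}.

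Next I would restrict attention to the affine line $\mathbf{A}^1_K=\mathbf{P}^1_K\setminus\{\infty\}$: the composite $ev\colon \mathbf{A}^1_K\times H' \to X\backslash D$, for $H'$ a suitable neighbourhood of $[f]$ in $F$, is a smooth morphism with geometrically integral fibres (the fibre over a point of $X\backslash D$ being cut out inside a rationally connected fibre of $ev_{0,1}$, again rationally connected hence integral). The point is that $\mathbf{A}^1_K\times H'$ maps onto $X\backslash D$, and strong approximation propagates along smooth surjections with good fibres: one applies the fibration method (Proposition \ref{methfibr}, or rather its analysis) to $ev$, using that $\mathbf{A}^1_K\times H'$ has a dense set of rational sections of $ev_{0,1}$ coming from the rationally connected fibres. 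Concretely, the $K$-conic $f$ gives a rational point $[f]\in H'(K)$; pulling the adelic point $(\alpha_v)$ back through $ev$ (possible at places where $\alpha_v\in \operatorname{im}(ev)$, i.e. where the conic meets the target, arranged by smoothness of $ev$ and the implicit function theorem over $K_v$) reduces strong approximation on $U$ to strong approximation on the total space $\mathbf{A}^1_K\times H'$ intersected with the preimage of $U$, which contains its own codimension-$1$ points.

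The remaining input is strong approximation for $\mathbf{A}^1_K$ together with the pureté already available for it: $\mathbf{A}^1_K$ satisfies strong approximation off $S$ (the commutative-algebra fact cited in the proof of Proposition \ref{affAf}), and by Proposition \ref{affAf} every open of an affine space containing its codimension-$1$ points does too. So after base-changing to $H'$ and trivialising the conic bundle over a dense open of $H'$ — where $\mathbf{A}^1_K\times H'$ looks like $\mathbf{A}^1$ over the function field of $H'$, which is again a function field of a complex variety but here one only needs the $\mathbf{A}^1$-factor — the fibration method \ref{methfibr} applies with $Y=$ the relevant open of $\mathbf{A}^1_K\times H'$, $X=H'$, $W$ a dense open over which everything is nice, and fibres that are opens of affine lines hence verify strong approximation off $S$; condition (iii) of \ref{methfibr} holds by smoothness of $ev$ over $K_v$ for $v\in S$. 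Chasing the approximating rational point of $\mathbf{A}^1_K\times H'$ through $ev$ back to $U(K)$ finishes the argument.

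The main obstacle, I expect, is the synchronisation at the finitely many places where one imposes the approximation condition: one must simultaneously (a) realise a single $K$-conic $f$ in the distinguished component $F$ whose two marked values approximate the target at two chosen places while $f^{-1}(D)=E$ exactly, using weak approximation on $X$ and on the parameter space $F$ via Graber--Harris--Starr, and (b) ensure that at \emph{every} place in $S$ and at every place where a $U$-constraint is imposed, the point $\alpha_v$ actually lies in the image of $ev$ restricted to a $K_v$-conic close to $f$, so that the fibre of $ev$ over it is non-empty over $K_v$ — this is where smoothness of $ev$ near $[f]$ (hence Corollary \ref{corcrucial}, hence hypothesis (ii)) and the implicit function theorem over $K_v$ are essential, and it is the step most sensitive to the geometry of $H$.
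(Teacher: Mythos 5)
Your plan assembles the right ingredients (Graber--Harris--Starr for the fibres of $ev_{0,1}$, weak approximation on $X$, smoothness of $ev$ via le lemme \ref{lemkollar} and le corollaire \ref{corcrucial}, strong approximation on $\mathbf{A}^1_K$, la scholie \ref{scholie}), but the final reduction does not work as stated. You propose to apply the fibration method (proposition \ref{methfibr}) to an open of $\mathbf{A}^1_K\times H'\rightarrow H'$: condition (i) of that proposition then requires the \emph{base} $H'$ --- an open of a component of the space of conics --- to satisfy strong approximation off $S$, which is neither established nor plausible; and the claim that the fibres, being ``opens of affine lines'', satisfy strong approximation is false (a proper open of $\mathbf{A}^1_K$, e.g.\ $\mathbf{G}_m$, fails strong approximation off every finite set of places over $\mathbf{C}(\Gamma)$, as recalled in the introduction). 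Likewise, pushing strong approximation forward from the source of $ev$ to its target is not what either the fibration method or the descent method of proposition \ref{purdesc} provides.

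The paper's proof avoids moving in the parameter space at the adelic level. Given the adelic point $(P_v)$, one first uses weak approximation on $X$ to produce \emph{two} rational points: $Q\in U(K)$ close to $P_v$ at the places of $S$, and, after enlarging $S$ to a set $S'$ outside of which $Q$ is integral, a second point $Q'\in U(K)$ close to $P_v$ at the places of $S'\setminus S$. One then fixes a \emph{single} $K$-conic $f$ with $f(0)=Q$, $f(1)=Q'$ and $\mathbf{A}^1_K\subset f^{-1}(X\setminus(D\cup Z))$ (density of rational points in the rationally connected fibre of $ev_{0,1}$ over $(Q,Q')$), and applies strong approximation off $v_0$ only to $\mathbf{A}^1_K$ itself: choosing $x\in\mathbf{A}^1(K)$ close to $0$ at the places of $(S''\setminus S')\cup(S\setminus\{v_0\})$, close to $1$ at those of $S'\setminus S$, and integral outside $S''$, the point $f(x)$ inherits integrality from $Q$, from $Q'$, or from a model of $f$ according to the place, while approximating $Q$ at $S\setminus\{v_0\}$. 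This two-point bookkeeping with the nested sets $S\subset S'\subset S''$ is the key idea your proposal is missing; the removal of the codimension-$2$ closed set $Z$ is then handled separately (proposition \ref{ouvcompirr}) by showing, via the flatness of $ev$ on $\mathbf{A}^1_K\times F^o$ that you correctly identified, that the conics parametrized by a dense open $W\subset F$ avoid $Z$ on their \emph{whole} affine part, not merely on a proper open of it.
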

Avant d'en donner une démonstration, nous avons besoin de quelques résultats intermédiaires. Commençons par énoncer une variante de \cite[Theorem 1.6]{zbMATH06921639} et donnons-en une démonstration proche de celle de \cite[Lemma 1.8]{MR3432584}:

\begin{prop}\label{afmod}
Faisons les mêmes hypothèses que dans le théorème \ref{purafmod}, fixons une composante irréductible $F$ comme dans (ii) et considérons un fermé $Z$ de $X$ tel que:
\begin{enumerate}[label=(\roman*)]
\setcounter{enumi}{2}
\item il existe un ouvert non vide $W$ de $F$ tel que $ev(\mathbf{A}^1_K\times W)\subset X\backslash Z$.
\end{enumerate}
Alors l'approximation forte vaut pour $X\backslash(D\cup Z)$ hors de tout ensemble fini non vide de places de $K$.
\end{prop}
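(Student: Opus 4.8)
Fixons un ensemble fini non vide $S$ de places de $K$ et posons $U:=X\backslash(D\cup Z)$. Le plan est d'établir directement l'approximation forte pour $U$ hors de $S$ : étant donnée une donnée adélique $(P_v)_{v\notin S}\in U(\mathbf{A}_K^S)$ à approcher, on fabriquera une conique $\phi:\mathbf{P}^1_K\to X$ définie sur $K$, dont la restriction à $\mathbf{A}^1_K$ a son image contenue dans $U$, puis on conclura en faisant varier le paramètre sur $\mathbf{P}^1_K$ grâce à l'approximation forte pour la droite affine.

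Je commencerais par quelques préliminaires géométriques. D'après la scholie \ref{scholie}, tout $[f]\in H$ vérifie $f^{-1}(D)=\{\infty\}$, d'où $f(\mathbf{A}^1_K)\subset X\backslash D$ ; combiné à la condition (iii), ceci donne $ev(\mathbf{A}^1_K\times W)\subset U$, et en particulier $f(0)\in U$ ainsi que $f(\mathbf{A}^1_K)\subset U$ pour tout $[f]\in W$. D'autre part, comme $ev_{0,1}\vert_F$ est dominante de fibre générique géométriquement intègre et rationnellement connexe, il existe un ouvert dense $G\subset X\times X$ tel que, pour tout $(x_0,x_1)\in G$, la fibre $F_{x_0,x_1}:=(ev_{0,1}\vert_F)^{-1}(x_0,x_1)$ soit géométriquement intègre, rationnellement connexe, et telle que $W\cap F_{x_0,x_1}$ en soit un ouvert dense ; quitte à rétrécir $G$, on peut supposer que son image $V_0$ par la première projection est un ouvert dense de $X$.

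Considérons maintenant un ouvert de base $\prod_{v\in S'\backslash S}\Omega_v\times\prod_{v\notin S'}\mathscr{U}(\mathscr{O}_v)$ de $U(\mathbf{A}_K^S)$ contenant $(P_v)$, où $S'\supset S$ est fini, $\mathscr{U}$ un $\mathscr{O}_{K,S'}$-modèle de $U$, et $\Omega_v\subset U(K_v)$ un ouvert $v$-adique non vide que l'on peut supposer contenu dans $(V_0\cap U)(K_v)$. L'hypothèse (i), appliquée à $X\times X$ qui vérifie également l'approximation faible, fournit alors un point rationnel $(x_0,x_1)\in G(K)$ avec $x_0\in\Omega_v$ pour tout $v\in S'\backslash S$. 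La fibre $F_{x_0,x_1}$ étant une $K$-variété géométriquement intègre et rationnellement connexe, elle admet un point rationnel par \cite[Theorem~1.2]{MR1937199}, et ses points rationnels sont Zariski-denses par \cite[Theorem 2.13]{MR1158625} : ils rencontrent donc l'ouvert dense $W\cap F_{x_0,x_1}$, ce qui fournit $[\phi]\in(W\cap F_{x_0,x_1})(K)$. Le morphisme $\phi:\mathbf{P}^1_K\to X$ ainsi obtenu est de degré $2$, vérifie $\phi(0)=x_0$ et $\phi(\mathbf{A}^1_K)\subset U$. Quitte à agrandir $S'$, on peut supposer que $\phi$ s'étend en un morphisme $\mathbf{A}^1_{\mathscr{O}_{K,S'}}\to\mathscr{U}$. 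Pour $v\in S'\backslash S$, la continuité $v$-adique de $\phi$ et l'appartenance $\phi(0)=x_0\in\Omega_v$ donnent un ouvert $v$-adique $D_v\ni 0$ de $\mathbf{A}^1(K_v)$ tel que $\phi(D_v)\subset\Omega_v$. Puisque $\mathbf{A}^1_K$ vérifie l'approximation forte hors de $S$ --- c'est ici qu'intervient l'hypothèse $S\neq\emptyset$, voir la démonstration de la proposition \ref{affAf} ---, il existe $t\in\mathbf{A}^1(K)=K$ tel que $t\in D_v$ pour tout $v\in S'\backslash S$ et $t\in\mathscr{O}_v$ pour tout $v\notin S'$. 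Le point $\phi(t)\in U(K)$ appartient alors à l'ouvert de base de départ, ce qui conclut.

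Le point le plus délicat est la construction du couple $(x_0,x_1)$ : il faut à la fois approcher les $P_v$ par la première coordonnée $x_0$ --- ce qui impose de disposer de l'approximation faible sur $X$, d'où l'hypothèse (i) ---, et s'assurer que $(x_0,x_1)$ tombe dans le lieu $G$, de sorte que $F_{x_0,x_1}$ soit une $K$-variété rationnellement connexe géométriquement intègre à laquelle s'appliquent les résultats de Graber--Harris--Starr et de Kollár--Miyaoka--Mori rappelés dans l'introduction, produisant le point rationnel $[\phi]$ dans $W$ ; c'est là qu'intervient de manière essentielle l'hypothèse (ii).
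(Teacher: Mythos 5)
Votre stratégie d'ensemble est bien celle de l'article (approcher la donnée adélique par un point rationnel via l'approximation faible, produire une conique de $W$ passant par ce point grâce à Graber--Harris--Starr et Kollár--Miyaoka--Mori appliqués à une fibre de $ev_{0,1}\vert_F$, puis conclure par l'approximation forte sur $\mathbf{A}^1_K$), mais il y a une lacune réelle à l'étape «~quitte à agrandir $S'$, on peut supposer que $\phi$ s'étend en un morphisme $\mathbf{A}^1_{\mathscr{O}_{K,S'}}\rightarrow\mathscr{U}$~». Notons $S''\supset S'$ l'ensemble agrandi. L'ouvert de base fixé au départ impose $\phi(t)\in\mathscr{U}(\mathscr{O}_v)$ pour \emph{tout} $v\notin S'$, donc en particulier pour $v\in S''\backslash S'$. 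En ces places, la condition $t\in\mathscr{O}_v$ ne donne rien puisque $\phi$ ne s'y étend justement pas en un morphisme entier, et envoyer $t$ près de $0$ ne donne rien non plus~: le point $x_0=\phi(0)$, fourni par l'approximation faible, n'est contrôlé qu'aux places de $S'\backslash S$ et n'a aucune raison d'être entier ailleurs --- et si $x_0\notin\mathscr{U}(\mathscr{O}_v)$, alors $\mathscr{U}(\mathscr{O}_v)$ étant ouvert et fermé dans $U(K_v)$, tout point assez proche de $x_0$ est lui aussi non entier en $v$ (noter d'ailleurs que si $x_0$ n'est pas entier en $v$, $\phi$ ne peut pas s'étendre sur $\mathscr{O}_v$, de sorte que ces places problématiques sont précisément contenues dans $S''\backslash S'$). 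Votre rédaction ne fournit donc aucun moyen de rendre $\phi(t)$ entier en les places de $S''\backslash S'$.

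C'est exactement pour traiter ce point que la démonstration de l'article exploite les \emph{deux} points marqués $0$ et $1$, alors que votre $x_1$ ne sert jamais. L'article choisit d'abord $Q$ proche des $P_v$ aux places à approcher, pose ensuite $S'$ égal à l'ensemble des places hors desquelles le point $Q$ \emph{déjà construit} est entier, choisit alors $Q'$ proche des $P_v$ (donc entier) aux places de $S'\backslash S$, et prend la conique $f$ avec $f(0)=Q$ et $f(1)=Q'$, étendue sur $\mathscr{O}_{K,S''}$. Le paramètre $t$ est finalement envoyé près de $1$ aux places de $S'\backslash S$ (où $f(1)=Q'$ est entier), près de $0$ aux places de $S''\backslash S'$ (où $f(0)=Q$ est entier par construction même de $S'$) et dans $\mathscr{O}_v$ hors de $S''$. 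Ce découpage en trois zones, impossible si l'on ne dispose que du point $0$, est l'idée qui manque à votre argument~; le reste de votre rédaction est conforme à la preuve de l'article.
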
 

\begin{proof}
Considérons $v_0$ une place de $K$ et montrons que $U=X\backslash (D\cup Z)$ vérifie l'approximation forte hors de $v_0$. Pour ce faire, choisissons $\xi\in U(\mathbf{A}_K)$, $S$ un ensemble fini de places de $K$ contenant $v_0$ ainsi que des $\mathscr{O}_{K,S}$-modèles $\mathscr{X}$, $\mathscr{D}$ et $\mathscr{Z}$ respectifs de $X$, $D$ et~$Z$ tels que $\mathscr{D}$ et $\mathscr{Z}$ soient des fermés de $\mathscr{X}$. En notant $\mathscr{U}=\mathscr{X}\backslash(\mathscr{D}\cup\mathscr{Z})$ on peut, quitte à agrandir $S$, écrire $\xi$ comme une famille de points locaux $(P_v)_{v\in\Omega_K}$ telle que $P_v\in \mathscr{U}(\mathscr{O}_v)$ pour $v\not\in S$.

Comme une fibre générale de $ev_{0,1}\vert_W$ est géométriquement intègre et rationnellement connexe, il suit de \cite[Theorem~1.2]{MR1937199} combiné à \cite[Theorem 2.13]{MR1158625} que les points rationnels sont denses dans une fibre de $ev_{0,1}\vert_W$ au-dessus d'un point rationnel général de $X$. Puisque $X$ vérifie l'approximation faible, on peut trouver $Q\in U(K)$ qui est arbitrairement proche des points $P_v$ pour $v\in S$. Choisissons~$S'$ fini contenant $S$ tel que $Q$ est entier hors de $S'$. L'approximation faible sur $X$ assure également qu'il existe $Q'\in U(K)$ tel que $Q'$ soit arbitrairement proche des $P_v$ pour $v\in S'\backslash S$. Par le théorème des fonctions implicites, la paire $(Q,Q')$ peut être choisie dans tout ouvert de~$X\times X$. On peut alors supposer que les points rationnels de la fibre de~$ev_{0,1}\vert_W$ au-dessus de $(Q,Q')$ y sont denses et il existe donc $f:\mathbf{P}^1_K\rightarrow X$ telle que $f(0)=Q$, $f(1)=Q'$ et $\mathbf{A}^1_K\subset f^{-1}(X\backslash Z)$. En outre, la scholie \ref{scholie} assure que cette dernière inclusion s'écrit également $\mathbf{A}^1_K\subset f^{-1}(X\backslash(D\cup Z))$. Choisissons alors un ensemble fini de places $S''$ de $K$ contenant~$S'$ tel que $f\vert_{\mathbf{A}^1_K}$ s'étende en un morphisme $f_{S''}:\mathbf{A}^1_{\mathscr{O}_{K,S''}}\rightarrow\mathscr{U}\otimes_{\mathscr{O}_{K,S}}\mathscr{O}_{K,S''}$.

Puisque $\mathbf{A}^1_K$ vérifie l'approximation forte hors de $v_0$, il existe $x\in\mathbf{A}^1(K)$ arbitrairement proche de $0$ aux places de $(S''\backslash S')\cup(S\backslash\{v_0\})$, de $1$ aux places de $S'\backslash S$ et appartenant à~$\mathbf{A}^1(\mathscr{O}_v)$ pour $v$ hors de $S''$. Mais alors, le point $f(x)$ est un élément de $U(K)$ arbitrairement proche de $Q$ aux places de $S\backslash\{v_0\}$ et entier hors de $S$: en effet, d'une part $Q'$ est entier aux places de $S'\backslash S$ donc $f(x)$ également; d'autre part le point $Q$ est entier en les places de $S''\backslash S'$ donc $f(x)$ l'est également; enfin $x$ est entier hors de $S''$ donc $f(x)=f_{S''}(x)$ est entier hors de $S''$. Puisque $Q$ est pris proche des $P_v$ pour $v\in S\backslash\{v_0\}$, on en déduit que~$f(x)$ convient.
\end{proof}

La proposition suivante montre que si $Z$ est de codimension au moins $2$ dans $X$, la condition (iii) de la proposition précédente est impliquée par l'hypothèse (ii):

\begin{prop}\label{ouvcompirr}
Si $Z$ est un fermé de $X$ tel que $\codim(Z,X)\geq2$ et si $F$ est une composante irréductible de $H$ telle que $\mathbf{P}^1_K\times F$ domine $X$ via $ev$, alors il existe un ouvert non vide $W$ de $F$ tel que $ev(\mathbf{A}^1_K\times W)\subset X\backslash Z$.
\end{prop}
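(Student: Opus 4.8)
Le plan est de produire un ouvert dense $W_0$ de $F$ au-dessus duquel le morphisme d'évaluation $ev$ est une submersion \emph{le long de toute la droite affine $\mathbf{A}^1_K$}, puis d'en déduire par un décompte de dimensions que le lieu des $[f]\in F$ pour lesquels $f(\mathbf{A}^1_K)$ rencontre $Z$ n'est pas dense. Posons $N=\dim F$ et $d=\dim X$. Comme $K$ est de caractéristique nulle et que $ev:\mathbf{P}^1_K\times F\rightarrow X$ est dominant par hypothèse, la lissité générique fournit un ouvert dense de $\mathbf{P}^1_K\times F$ sur lequel $ev$ est lisse ; cet ouvert rencontre l'ouvert dense $\mathbf{A}^1_K\times F=(\mathbf{P}^1_K\backslash E)\times F$, d'où un point géométrique $(p,[f_0])$ avec $p\in\mathbf{A}^1_K$ en lequel $d\,ev$ est surjective. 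Le lemme \ref{lemkollar} assure alors que le rang de $d\,ev$ en $(q,[f_0])$ vaut $d$ pour \emph{tout} $q\in\mathbf{A}^1_K$.

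On construirait ensuite $W_0$ comme suit. Soit $\mathcal{S}$ le lieu de $\mathbf{A}^1_K\times F$ où $d\,ev$ n'est pas de rang $d$ : c'est un fermé de $\mathbf{A}^1_K\times F$, et le lemme \ref{lemkollar} montre qu'il est saturé pour la projection $\mathrm{pr}:\mathbf{A}^1_K\times F\rightarrow F$, à savoir $\mathcal{S}=\mathbf{A}^1_K\times\mathcal{S}'$ où $\mathcal{S}'$, égal à l'image inverse de $\mathcal{S}$ par la section $[f]\mapsto(0,[f])$, est un fermé de $F$. D'après le point précédent on a $[f_0]\notin\mathcal{S}'$, donc $W_0:=F\backslash\mathcal{S}'$ est un ouvert non vide de $F$ ; de surcroît, pour chaque $[f]\in W_0$ la différentielle $d\,ev$ est surjective au point $(0,[f])$ de $(\mathbf{P}^1_K\backslash E)\times H$, si bien que le corollaire \ref{corcrucial} montre que $H$, donc $F$, est lisse en $[f]$. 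Ainsi $W_0$ est une $K$-variété lisse et $d\,ev$ est surjective en tout point de $\mathbf{A}^1_K\times W_0$.

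Le morphisme $ev:\mathbf{A}^1_K\times W_0\rightarrow X$ est dès lors un morphisme lisse entre $K$-variétés lisses. Il est dominant, car $\mathbf{A}^1_K\times W_0$ est dense dans $\mathbf{P}^1_K\times F$ qui domine $X$, et sa source est irréductible, comme produit de $\mathbf{A}^1_K$ (géométriquement irréductible) par l'ouvert irréductible $W_0$ de $F$ ; il est donc de dimension relative constante $1+N-d$. Par le théorème sur la dimension des fibres, toute composante irréductible de $ev^{-1}(Z)\cap(\mathbf{A}^1_K\times W_0)$ est de dimension au plus
$$\dim Z+(1+N-d)\leq(d-2)+1+N-d=N-1.$$
L'image de $ev^{-1}(Z)\cap(\mathbf{A}^1_K\times W_0)$ par $\mathrm{pr}$ est alors de dimension au plus $N-1$, donc contenue dans un fermé $C\subsetneq W_0$, ce dernier étant irréductible de dimension $N$. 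On poserait enfin $W=W_0\backslash C$ : c'est un ouvert non vide de $F$ et, pour tout point $[f]$ de $W$, la fibre $\mathbf{A}^1_K\times\{[f]\}$ ne rencontre pas $ev^{-1}(Z)$, c'est-à-dire $ev(\mathbf{A}^1_K\times\{[f]\})\subset X\backslash Z$, d'où $ev(\mathbf{A}^1_K\times W)\subset X\backslash Z$.

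Le point le plus délicat est la deuxième étape : il ne suffit pas que $ev$ soit une submersion en un point générique de $\mathbf{P}^1_K\times F$, il faut une submersion en tout point d'une fibre $\mathbf{A}^1_K\times\{[f]\}$ entière, ce que seul le lemme \ref{lemkollar} permet d'obtenir grâce à l'identification du rang de $d\,ev$ avec $\Card\{i:a_i\geq0\}$, quantité indépendante du point de $\mathbf{P}^1_K\backslash E$. Le reste repose sur la platitude des morphismes lisses et sur le fait que $E$ est à support dans $\{\infty\}$ — ce qui donne $\mathbf{P}^1_K\backslash E=\mathbf{A}^1_K$ et autorise à évaluer en $0$ comme en un point générique de la droite affine.
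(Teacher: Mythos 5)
Votre démonstration est correcte et suit pour l'essentiel la même démarche que celle de l'article : on trouve un point de $\mathbf{A}^1_K\times F$ où la différentielle de $ev$ est surjective (lissité générique), on propage cette surjectivité à toute la fibre $\mathbf{A}^1_K\times\{[f]\}$ grâce au lemme \ref{lemkollar}, on obtient la lissité de $H$ par le corollaire \ref{corcrucial}, puis la lissité (donc la platitude) de $ev$ sur $\mathbf{A}^1_K\times W_0$ et un décompte de dimensions utilisant $\codim(Z,X)\geq 2$ concluent exactement comme dans le texte. Seule imprécision : le lieu $\mathcal{S}$ où $d\,ev$ n'est pas surjective n'est pas fermé en général lorsque la source est singulière, mais l'ouverture de $W_0$ dont vous avez réellement besoin se déduit du corollaire \ref{corcrucial} (tout point de $W_0$ est un point lisse de $H$) joint à la semi-continuité inférieure du rang de la différentielle sur le lieu lisse, ce qui ne change rien au reste de l'argument.
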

\begin{proof}
La question étant géométrique, on peut aussitôt se ramener au cas où $K$ est algébriquement clos et raisonner en termes de points.

Comme $H$ est localement noethérien, choisissons $F^o$ un ouvert non vide de $F$ qui ne rencontre aucune autre composante irréductible de $H$. Notons $ev^o$ la restriction de~$ev$ à~$\mathbf{P}^1_K\times F^o$. Le morphisme $ev^o$ étant dominant, il existe par \cite[Chapter III, Proposition~10.6]{MR0463157} un ouvert non vide $U$ de $X$ tel que $ev^o$ soit de différentielle surjective en tout point de $(ev^o)^{-1}(U)$. La projection sur $F^o$ de $(ev^o)^{-1}(U)$ est alors un ouvert non vide de~$F^o$, qui est lisse en vertu du corollaire \ref{corcrucial}. Quitte à réduire $F^o$, on peut donc supposer que $F^o$ est lisse et que $(ev^o)^{-1}(U)$ se surjecte sur $F^o$. Mais le lemme \ref{lemkollar} assure alors que la différentielle de $ev^o$ est surjective en tout point de $\mathbf{A}^1_K\times F^o$. On déduit alors de \cite[Chapter III, Proposition 10.4]{MR0463157} que la restriction $ev^{oo}$ de $ev^o$ à~$\mathbf{A}^1_K\times F^o$ est lisse et donc plate.

Comme $Z$ est un fermé de codimension au moins $2$ de $X$, la platitude de $ev^{oo}$ assure que~$(ev^{oo})^{-1}(Z)$ est un fermé de codimension au moins $2$ de $\mathbf{A}^1_K\times F^o$. L'adhérence de sa projection sur~$F^o$ est alors un fermé strict de $F^o$ dont on note $W$ le complémentaire. Il vient donc que la restriction de $ev^{oo}$ à $\mathbf{A}^1_K\times W$ est un morphisme dominant dont l'image est dans~$X\backslash Z$.
\end{proof}
La démonstration du théorème \ref{purafmod} s'en déduit aussitôt:
\begin{proof}[Démonstration du théorème \ref{purafmod}]
Soit $Z$ un fermé de codimension $2$ de $X$. Il s'agit d'appli\-quer le proposition \ref{afmod} et donc de vérifier que la condition (iii) est satisfaite. Or, par (ii), le morphisme $ev_{0,1}\vert_F$ est dominant, ce qui implique que la restriction de $ev$ à $\mathbf{P}^1_K\times F$ l'est aussi. Le résultat découle donc de la proposition \ref{ouvcompirr}.
\end{proof}

\subsection{Intersections complètes affines lisses de bas multidegré}\label{total}

Dans cette section $X$ désigne une intersection complète lisse dans $\textbf{P}^n_K$ de type $(d_1,\dots,d_c)$ telle que $\sum_{i=1}^cd_i^2\leq n$. On choisit $D$ une section hyperplane lisse de $X$ et on reprend les notations de la sous-section précédente. Cette sous-section est dédiée au théorème \ref{thpuricl}:

\thpuricl*

\begin{proof}
Commençons par écarter le cas où $X\backslash D$ est l'espace affine, auquel cas le résultat découle de la proposition \ref{affAf}. Pour démontrer le théorème, il s'agit alors de montrer que les hypothèses du théorème \ref{purafmod} sont vérifiées pour $X$.  D'une part, la variété~$X$ vérifie l'approximation faible par les travaux de de Jong et Starr dans \cite[Theorem~1.1]{dJS2006} (voir également \cite[Corollary 4.8]{MR2931861}).\\

D'autre part, l'hypothèse (ii) du théorème \ref{purafmod} est essentiellement \cite[Theorem~1.8]{zbMATH06921639} mais il nous faut être plus précis car les espaces de modules considérés par les auteurs diffèrent de ceux définis ici. Introduisons la notation suivante: si $\mathcal{S}$ est un log-schéma (resp. un log-champ: voir \cite[section 5]{MR2032986} pour la définition d'un log-champ), notons $\underline{\mathcal{S}}$ le schéma sous-jacent (resp. le champ sous-jacent): si cette log-structure est triviale, on notera indifféremment $\mathcal{S}$ ou $\underline{\mathcal{S}}$. De même, si $u:\mathcal{S}\rightarrow \mathcal{T}$ est un morphisme de log-schémas (resp. de log-champs) on notera $\underline{u}$ le morphisme de schémas (resp. de champs) sous-jacent.

Notons $d$ la dimension de $X$, $l$ un nombre premier et choisissons $\beta$ dans $$H^{2d-2}_{\etale}(X_{\overline{K}},\mathbf{Z}_l(d-1))$$ la classe d'une courbe. Considérons la catégorie $\mathcal{C}$ constituée des log-applications stables vers le log-schéma divisoriel~$(X,D)$, de source une log-courbe dont les fibres sont de classe~$\beta$ et rencontrant $D$ en un point marqué, avec deux points marqués supplémentaires d'ordre de contact trivial:~on pourra se référer à \cite[Appendix B]{MR3224717}, \cite[\S2]{MR3257836} et \cite[\S1.3]{zbMATH06921639} pour la définition précise de~$\mathcal{C}$ et à \cite[\S2.2]{MR4026447} pour la définition d'application log-stable et d'une log-courbe.
%Considérons le foncteur $\mathscr{A}_2(X,\beta)$ envoyant un log-schéma $T$ sur l'ensemble des courbes stables tracées sur $X$, de classe $\beta$ et rencontrant $D$ en un point marqué, avec deux points marqués supplémentaires d'ordre de contact trivial: on pourra se référer à \cite[Appendix B]{MR3224717}, \cite[\S2]{MR3257836} et \cite[\S1.3]{zbMATH06921639} pour la définition précise de~$\mathscr{A}_2(X,\beta)$.

La catégorie $\mathcal{C}$ est naturellement fibrée au-dessus de la catégorie des log-schémas sur~$K$ et est en fait un log-champ $\mathscr{A}_2(X,\beta)$ (voir \cite[\S2.2]{MR4026447} ainsi que \cite[\S1.3]{zbMATH06921639}). Considérons donc la famille universelle:
\begin{center}
\begin{tikzcd}
\mathscr{C} \arrow[r] \arrow[d] & (X,D) \\
\mathscr{A}_2(X,\beta) & 
\end{tikzcd}
\end{center}qui est munie de trois sections $q,r,s:\mathscr{A}_2(X,\beta)\rightarrow \mathscr{C}$ de $\mathscr{C}\rightarrow\mathscr{A}_2(X,\beta)$ et où $\mathscr{C}\rightarrow(X,D)$ est une application log-stable. On dispose également d'un morphisme d'évaluation $$\widetilde{ev}:\mathscr{A}_2(X,\beta)\rightarrow X\times X$$ défini en envoyant $f$ sur $(f(q),f(r))$. Si on note $\alpha\in H^{2d-2}_{\etale}\left(X_{\overline{K}},\mathbf{Z}_l(d-1)\right)$ la classe d'une droite dans $X$ et si on pose $\beta=2\alpha$, Chen et Zhu montrent dans \cite[Theorem 1.8]{zbMATH06921639}, sous l'hypothèse $\sum_{i=1}^nd_i^2\leq n$, qu'une fibre générale de $$\underline{\widetilde{ev}}:\underline{\mathscr{A}_2(X,2\alpha)}\rightarrow\underline{X}\times\underline{X}$$ est un schéma géométriquement intègre et rationnellement connexe dont un point général paramètre une courbe lisse.

Choisissons donc un ouvert $U$ de~$\mathscr{A}_2(X,2\alpha)$ paramétrant des courbes lisses et tel que le morphisme $\underline{\widetilde{ev}}\vert_{\underline{U}}:\underline{U}\rightarrow\underline{X}\times\underline{X}$ est de fibre générale géométriquement intègre et rationnellement connexe. Quitte à restreindre~$U$, on peut également supposer qu'on a un isomorphisme $\mathscr{C}\vert_{U}\xrightarrow{\sim}\mathbf{P}^1\times U$ envoyant~$q$ (resp. $r$, resp. $s$) sur $0$ (resp. $1$, resp. $\infty$). En munissant $H$ de sa log-structure triviale, la propriété universelle de $H$ donne alors un log-morphisme $v:U\rightarrow H$. De plus, on dispose du $H$-point suivant de $\mathscr{A}_2(X,2\alpha)$:
\begin{center}
\begin{tikzcd}
H\times\mathbf{P}^1 \arrow[r, "ev"] \arrow[d] & (X,D) \\
H & 
\end{tikzcd}
\end{center}
où $H\times\mathbf{P}^1$ est muni de sa log-structure divisorielle donnée par le diviseur $H\times\{\infty\}$. Ceci donne donc un log-morphisme $w:H\rightarrow\mathscr{A}_2(X,2\alpha)$ et on vérifie que $\underline{v}$ et $\underline{w}$ sont des inverses birationnels l'un de l'autre.

Enfin, la construction de $v$ et $w$ assure que le diagramme suivant commute:
\begin{center}
\begin{tikzcd}
U \arrow[rr, "v"] \arrow[rd, "\widetilde{ev}\vert_{U}", swap] &  & H\arrow[ld, "ev_{0,1}"] \\
 & X\times X &
\end{tikzcd}.
\end{center}
Puisque $\underline{v}$ est un morphisme birationnel, on en déduit que les fibres génériques de $\underline{\widetilde{ev}}$ et de $ev_{0,1}$ sont birationnelles. Il s'ensuit qu'une fibre générale de $ev_{0,1}$ est géométriquement intègre et rationnellement connexe, ce qui assure que l'hypothèse (ii) du théorème \ref{purafmod} est vérifiée.
\end{proof}

\section*{Remerciements}

Mes plus vifs remerciements vont à Olivier Wittenberg, qui m'a introduit à cette question et me partagea généreusement ses idées pour l'aborder: les échanges mathématiques que nous eûmes, sa patience, ainsi que ses conseils avisés, me furent d'une aide précieuse tout au long de ce travail. Je suis également reconnaissant à Jason Starr de m'avoir partagé un argument technique permettant d'établir une version simplifiée du critère de pureté~\ref{purafmod} où n'apparaissait pas de section hyperplane. Je remercie enfin Oussama Hamza pour ses commentaires sur l'ensemble du texte.

\bibliographystyle{amsalpha-fr}
\bibliography{biblio}
%\nocite{*}

\textsc{Institut Galilée, Université Sorbonne Paris Nord, 99~avenue Jean-Baptiste Clément, 93430 Villetaneuse, France}

\textit{Adresse électronique}: \href{mailto:boughattas@math.univ-paris13.fr}{boughattas@math.univ-paris13.fr}

\end{document}